\newtheorem{newthm}{Theorem}
\newtheorem{thm}{Theorem}[section]
\newtheorem{cor}[thm]{Corollary}
\newtheorem{lem}[thm]{Lemma}
\newtheorem{prop}[thm]{Proposition}
\newtheorem{hypothesis}[thm]{Hypothesis} 
\theoremstyle{definition}
\newtheorem{definition}[thm]{Definition}
\theoremstyle{remark}
\newtheorem{rem}[thm]{Remark}
\numberwithin{equation}{section}
\newcommand{\ai}{\sqrt{-1}} 
\newcommand{\Vol}{\mathrm{Vol}}
\newcommand{\rk}{\mathrm{rk}}
\newcommand{\tr}{\mathrm{tr}}
\newcommand{\Id}{\mathrm{Id}}
\newcommand{\surj}{\to\kern-1.8ex\to}
\newcommand{\vertiii}[1]{{\left\vert\kern-0.25ex\left\vert\kern-0.25ex\left\vert #1 
    \right\vert\kern-0.25ex\right\vert\kern-0.25ex\right\vert}}
\begin{document}
\title[Hermitian--Einstein metrics]{A variational approach to the Hermitian--Einstein metrics and the Quot-scheme limit of Fubini--Study metrics} 
\author{Yoshinori Hashimoto and Julien Keller}

\begin{abstract}
	This is a sequel of our paper \cite{HK1} on the Quot-scheme limit and variational properties of Donaldson's functional, which established its coercivity for slope stable holomorphic vector bundles over smooth projective varieties. Assuming that the coercivity is uniform in a certain sense, we provide a new proof of the Donaldson--Uhlenbeck--Yau theorem, in such a way that the analysis involved in the proof is elementary except for the asymptotic expansion of the Bergman kernel.
\end{abstract}

\maketitle

\section*{Introduction}

This is a sequel to our paper \cite{HK1} on the Quot-scheme limit and variational properties of Donaldson's functional $\mathcal{M}^{Don}$. Let $\mathcal{E}$ be a holomorphic vector bundle over a smooth projective variety $X$ of rank $r>1$. In \cite{HK1}, we established a direct relationship between the slope stability of $\mathcal{E}$ and the variational properties of $\mathcal{M}^{Don}$. It is natural to ask whether this result can be used to give a new proof of the correspondence\footnote{This theorem, having a long and rich history, is often called the Kobayashi--Hitchin correspondence in the literature.} between the slope stability of $\mathcal{E}$ and the existence of Hermitian--Einstein metrics on $\mathcal{E}$. The ``easy'' direction of Hermitian--Einstein metrics implying slope stability was established in \cite[Section 7]{HK1}, providing a more geometric point of view of the theorem by Kobayashi \cite{Kobookjp} and L\"ubke \cite{Luebke}. In this paper, assuming a certain strengthening of \cite[Theorem 1]{HK1}, stated as Hypothesis \ref{conjpczero}, we give a new proof of the ``hard'' direction, called the Donaldson--Uhlenbeck--Yau theorem, which states that the slope stability of $\mathcal{E}$ implies the existence of Hermitian--Einstein metrics on $\mathcal{E}$, with relatively elementary analysis except for the asymptotic expansion of the Bergman kernel.

\begin{newthm} \label{mainthmhk}
	Suppose that Hypothesis \ref{conjpczero} is true. Then, we can prove that there exists a Hermitian--Einstein metric on $\mathcal{E}$ if it is slope polystable, in such a way that the analysis involved in the proof is elementary except for the asymptotic expansion of the Bergman kernel.
\end{newthm}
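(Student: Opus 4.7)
The plan is to realise a Hermitian--Einstein metric on $\mathcal{E}$ as the $k \to \infty$ limit of minimisers of Donaldson's functional $\mathcal{M}^{Don}$ restricted to the finite-dimensional spaces of Fubini--Study metrics coming from the Quot-scheme embedding at level $k$. The uniform coercivity provided by Hypothesis \ref{conjpczero} is the input that both produces the minimisers and controls the continuum limit.

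I would fix a reference Hermitian metric $h_0$ on $\mathcal{E}$ and a K\"ahler form $\omega$ on $X$ in the given polarisation, and for each large $k$ consider the Quot-scheme presentation $H^0(X,\mathcal{E}\otimes L^k)\otimes L^{-k}\twoheadrightarrow \mathcal{E}$. Hermitian forms on $H^0(X,\mathcal{E}\otimes L^k)$ induce Fubini--Study metrics on $\mathcal{E}$, giving a finite-dimensional symmetric subspace $\mathcal{B}_k \cong GL(N_k,\C)/U(N_k)$ of the space of all Hermitian metrics, with $N_k = h^0(X,\mathcal{E}\otimes L^k)$. By Hypothesis \ref{conjpczero}, the restriction $\mathcal{M}^{Don}|_{\mathcal{B}_k}$ is coercive with constants independent of $k$; since $\mathcal{B}_k$ is finite-dimensional and $\mathcal{M}^{Don}$ is convex along geodesic rays of $\mathcal{B}_k$, coercivity produces a minimiser $H_k\in\mathcal{B}_k$ satisfying an Euler--Lagrange equation of balanced type for $\mathcal{E}\otimes L^k$. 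No PDE theory is needed for this step, precisely because we have traded the infinite-dimensional variational problem for a finite-dimensional one on each $\mathcal{B}_k$.

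The next step is to pass to the limit $k\to\infty$. Uniform coercivity yields a bound on the symmetric-space distance of the (suitably determinant-normalised) $H_k$ from $h_0$ that is independent of $k$, and a subsequence converges to a Hermitian metric $h_\infty$ on $\mathcal{E}$. Feeding the balanced equation for $H_k$ through the asymptotic expansion of the Bergman kernel of $\mathcal{E}\otimes L^k$ and reading off the coefficient of the first non-trivial power of $k^{-1}$ yields precisely $\sqrt{-1}\Lambda_\omega F_{h_\infty} = c\cdot \Id$, identifying $h_\infty$ as a Hermitian--Einstein metric. For the polystable (but not strictly stable) case, I would either apply the argument summand-wise to the Seshadri decomposition $\mathcal{E}=\bigoplus_i \mathcal{E}_i$, or reinterpret Hypothesis \ref{conjpczero} as coercivity of $\mathcal{M}^{Don}$ modulo the residual action of $\mathrm{Aut}(\mathcal{E})$ so that the minimising sequence may be gauge-normalised in advance.

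The hardest step will be upgrading the uniform bound from Hypothesis \ref{conjpczero}, which lives at the level of an integral functional on $\mathcal{B}_k$, to enough pointwise control on $H_k h_0^{-1}$ that the Bergman kernel expansion can be taken pointwise at the limit. Concretely, I will need uniform-in-$k$ bounds on the extreme eigenvalues of $H_k h_0^{-1}$ to exclude any degeneration of the relative endomorphism in the Quot-scheme direction, and to guarantee that $h_\infty$ is a genuine Hermitian metric rather than a degenerate limit. Executing this purely from the functional coercivity together with the Bergman kernel expansion, with no appeal to heat flow or continuity method, is exactly the ``elementary analysis'' promised in the theorem, and this is where the bulk of the work will lie.
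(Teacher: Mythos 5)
Your proposal takes a genuinely different route from the paper --- a finite-dimensional approximation scheme (minimise $\mathcal{M}^{Don}$ on each Bergman space $\mathcal{B}_k$, then let $k\to\infty$ through a balanced-type equation) rather than the paper's strategy of taking a minimising sequence in $\mathcal{H}_{\infty}$, approximating it by Fubini--Study metrics via Corollary \ref{corthmbergexp}, and extracting a $C^p$-convergent subsequence by contradiction. Unfortunately your route has gaps that are not merely technical. First, the claimed coercivity of $\mathcal{M}^{Don}|_{\mathcal{B}_k}$ does not follow from Hypothesis \ref{conjpczero} alone: along a Bergman 1-PS whose associated filtration (\ref{satfiltevs}) is trivial one has $J^{\mathrm{NA}}(\zeta,k)=\mathcal{M}^{\mathrm{NA}}(\zeta,k)=0$, so the lower bound (\ref{pczero}) degenerates to $\mathcal{M}^{Don}\ge -c_{\mathrm{ref}}$ while the metrics $h_{\sigma_t}$ can still degenerate as $t\to\infty$. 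The paper spends an entire technical lemma (Lemma \ref{lmdproperhp}) perturbing the generator $\zeta_i$ to some $\xi_i$ with $J^{\mathrm{NA}}(\xi_i,k_i)\ge\epsilon_J$ without changing the value of $\mathcal{M}^{Don}$, precisely to exclude these directions; your proposal does not address them, so the existence of the minimisers $H_k$ is not established. Second, the critical point of $\mathcal{M}^{Don}|_{\mathcal{B}_k}$ is not a balanced metric (balanced metrics are critical points of a different functional involving $\log\det$ of the Gram matrix), and the convexity of $\mathcal{M}^{Don}$ you invoke holds along geodesics of $\mathcal{H}_{\infty}$ (Proposition \ref{lemmdonconvsm}), which are not the symmetric-space geodesics of $\mathcal{B}_k$; convexity along Bergman 1-PS is neither proved in the paper nor obvious.

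Third, and most importantly for the ``elementary analysis'' claim: your limiting step requires applying the Bergman kernel expansion to the varying metrics $H_k$ (or at the not-yet-regular limit $h_\infty$), i.e.\ a version of Theorem \ref{thmbergexp} uniform over a family of metrics whose regularity you have not yet controlled. The paper deliberately avoids this: it uses the expansion only for a \emph{fixed} smooth metric (the constant $C(h,h_L,p)$ depends on $h$), deriving from it just the density statement of Corollary \ref{corthmbergexp}. The pointwise control you correctly identify as ``where the bulk of the work will lie'' is exactly what the paper supplies by entirely different, elementary means: the uniform estimates on $Q(k)^*$ and its derivatives (Propositions \ref{parczest} and \ref{parczestzeta}), which convert an operator-norm bound on $\xi_i t_i$ into $C^{p+1}$-bounds on the metrics, so that Arzel\`a--Ascoli gives $C^p$-compactness; the limit is then identified as the minimiser over $\mathcal{H}_{[p]}$, shown to satisfy the Hermitian--Einstein equation by convexity (Lemma \ref{lemhpprop}), and shown to be smooth by the uniqueness-across-all-$p$ bootstrap of Proposition \ref{propregmin}. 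Your summand-wise treatment of the polystable case does agree with the paper's.
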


In spite of the drawback of having to assume Hypothesis \ref{conjpczero}, our method has the novelty of relying much less on analysis (in particular nonlinear PDE theory), compared to the original proof by Donaldson \cite{Do-1983,Don1985,Don1987} and Uhlenbeck--Yau \cite{U-Y,U-Y2}. Our proof relies on the formalism involving the Quot-schemes in algebraic geometry, as developed in \cite{HK1}, which consequently reduces the input from hard analysis as stated in Theorem \ref{mainthmhk}. In terms of the analytic results that we do use, i.e.~the Bergman kernel expansion, it seems worth pointing out that we do not need the full extent of the asymptotic expansion of the Bergman kernel (Theorem \ref{thmbergexp}) and we only need one particular consequence of it: the set of Fubini--Study metrics is dense in the space of hermitian metrics on $\mathcal{E}$ (Corollary \ref{corthmbergexp}). While this consequence itself is widely used in K\"ahler geometry, no proof is known that is not based on Theorem \ref{thmbergexp} which is essentially a theorem in analysis. In any case, this is the only advanced analytic tool that we shall rely on in this paper.\\

It is also important to note that the proof of Theorem \ref{mainthmhk} is based on the formulation of the slope stability as \textit{uniform} stability, established in Proposition \ref{uniflemmna}; if $\mathcal{E}$ is slope stable, we shall show that the non-Archimedean Donaldson functional $\mathcal{M}^{\mathrm{NA}} (\zeta ,k)$ introduced in \cite{HK1} satisfies $$\mathcal{M}^{\mathrm{NA}} (\zeta ,k) \geq 2c_{\mathcal{E}}  \cdot J^{\mathrm{NA}} (\zeta , k )$$ for some constant $c_{\mathcal{E}} >0$ and a quantity $J^{\mathrm{NA}} (\zeta , k)$ which plays the role analogous to the non-Archimedean $J$-functional introduced in \cite{BHJ1} (and also, equivalently, in \cite{Dertwisted}).

Recall that the notion of uniform stability played a significant role in the study of constant scalar curvature K\"ahler metrics and K\"ahler--Einstein metrics in e.g.~\cite{BHJ2,BHJ1,Dertwisted}, amongst many others. In particular, Berman--Boucksom--Jonsson \cite{BBJ} proved that a Fano manifold admits a K\"ahler--Einstein metric if it is uniformly $K$-stable. This paper, together with its prequel \cite{HK1}, could be regarded as providing a new framework of the Donaldson--Uhlenbeck--Yau theorem by constructing various vector-bundle analogues of notions that proved useful in the study of constant scalar curvature K\"ahler metrics and K\"ahler--Einstein metrics. This point will be explained further in our forthcoming paper so that the analogies will become clearer.\\

Hypothesis \ref{conjpczero}, which we need to assume, implies in particular that Donaldson's functional is bounded from below when $\mathcal{E}$ is slope (semi)stable (Proposition \ref{pcorrdonna}). Indeed, from a variational point of view, finding a sufficient condition for Donaldson's functional bounded from below is of significant importance. In the appendix, we shall show that Donaldson's functional is bounded from below if we have a quantitative $C^0$-estimate for the hermitian metrics (Theorem \ref{propUnif}), in the sense of \textit{$\delta$-boundedness} as defined in Definition \ref{defdelbdd}. Although we cannot apply this result to our proof of Theorem \ref{mainthmhk} (see Remark \ref{rprpunifqsl}), we believe that it is of independent interest. It is tempting to point out an analogy with the situation in  K\"ahler--Einstein metrics, in which proving the $C^0$-estimate along the continuity path for solving the Monge-Amp\`ere equation has been known to be of crucial importance (see e.g.~\cite[Chapter 6]{TianBook}).
\\

{\bf Outline of the proof of Theorem \ref{mainthmhk}.} The key result is the inequality proved in \cite[Theorem 1]{HK1}, which describes the asymptotic behaviour of Donaldson's functional $\mathcal{M}^{Don}$ in terms of the algebro-geometric quantity $\mathcal{M}^{\mathrm{NA}}$ that involves slopes of $\mathcal{E}$ and its subsheaves (see also Theorem \ref{thmlnsdf}). Hypothesis \ref{conjpczero} is an improved version of this inequality, which implies in particular that Donaldson's functional is bounded from below if $\mathcal{E}$ is slope (semi)stable. We thus take a sequence $\{ h_i \}_{i \in \mathbb{N}} \subset \mathcal{H}_{\infty}$ such that $\mathcal{M}^{Don} (h_i)$ converges to $\inf_{\mathcal{H}_{\infty}}\mathcal{M}^{Don}$. We approximate each $h_i$ by a Fubini--Study metric, which is possible by Theorem \ref{thmbergexp} (or Corollary \ref{corthmbergexp}). The slope stability of $\mathcal{E}$ (or its reformulation as uniform stability as in Section \ref{ssstaust}), together with some uniform estimates for Fubini--Study metrics established in Section \ref{FSestim}, implies that the sequence $\{ h_i \}_{i \in \mathbb{N}}$ must contain a subsequence that converges in the $C^p$-topology for (any fixed) $p \ge 2$ (Proposition \ref{pmdproperhp}), up to slightly modifying the sequence $\{ h_i \}_{i \in \mathbb{N}}$ as indicated in Lemma \ref{lmdproperhp}, which is shown to converge to a well-defined smooth Hermitian--Einstein metric (Proposition \ref{propregmin}). \\

{\small 
\noindent {\bf Acknowledgments.} {\small The work of both authors was carried out in the framework of the Labex Archim\`ede (ANR-11-LABX-0033) and of the A*MIDEX project (ANR-11-IDEX-0001-02), funded by the ``Investissements d'Avenir" French Government programme managed by the French National Research Agency (ANR). The first named author was supported by the Post-doc position in Complex and Symplectic Geometry ``Bando de Bartolomeis 2017'', at Dipartimento di Matematica e Informatica U.~Dini, Universit\`a di Firenze, dedicated to the memory of Professor Paolo de Bartolomeis and funded by his family, and also by Universit\`a di Firenze, Progetto di ricerca scientifica d'ateneo (ex quota 60\%) Anno 2017: ``Geometria differenziale, algebrica, complessa e aritmetica'', and by JSPS KAKENHI Grant Number 19K14524. The second named author was also partially supported by the ANR project EMARKS, decision No ANR-14-CE25-0010 and a D\'el\'egation CNRS at UMI 3457, Montr\'eal.
}}

\tableofcontents

\section*{Notation}
We largely follow the notation that was used in \cite{HK1}. Throughout, $(X,L)$ stands for a polarised smooth projective variety over $\mathbb{C}$ of complex dimension $n$. We further assume that $L$ is very ample and often write it as $\mathcal{O}_X (1)$, and $L^{\otimes k}$ as $\mathcal{O}_X (k)$. We work with a fixed K\"ahler metric $\omega \in c_1 (L)$ on $X$ defined by a hermitian metric $h_L$ on $L$.

We write $\mathcal{O}_X$ for the sheaf of rings of holomorphic functions on $X$, $C^{\infty}_X$ for the one of $\mathbb{C}$-valued $C^{\infty}$-functions on $X$.

Throughout in this paper, coherent sheaves of $\mathcal{O}_X$-modules will be denoted by calligraphic letters (e.g.~$\mathcal{E}$). Given a coherent sheaf $\mathcal{F}$ of $\mathcal{O}_X$-modules, $H^0(X, \mathcal{F}) = H^0 (\mathcal{F})$ denotes the set of global sections of $\mathcal{F}$; for example, we shall often write $H^0 (\mathcal{F}(k))$ for $H^0(X , \mathcal{F} \otimes L^{\otimes k})$ for any coherent sheaf $\mathcal{F}$ on $X$. When $\mathcal{E}$ is locally free, we write $\Gamma_{C^{\infty}_X}(X, \mathcal{E}) = \Gamma_{C^{\infty}_X} (\mathcal{E})$ for the set of $C^{\infty}$-sections of the complex vector bundle $\mathcal{E}$, and $\mathrm{End}_{C^{\infty}_X} (\mathcal{E})$ for the ones of the vector bundle $\mathcal{E}^{\vee} \otimes \mathcal{E}$.

Unlike in \cite{HK1}, we use the same symbol $\mathcal{E}$ to denote a locally free sheaf of $\mathcal{O}_X$-modules and a complex $C^{\infty}$ vector bundle that underlies it.

We write $N_k$, or simply $N$, for $\dim_{\mathbb{C}} H^0 (\mathcal{E}(k))$.

\section{Preliminaries}
\subsection{Slope stability}

We recall the following notions, where we follow the standard notation to write $\mathrm{End}_{\mathcal{O}_X} (\mathcal{E}) := H^0 ( X, \mathcal{E} nd_{\mathcal{O}_X} (\mathcal{E}) )$ with $\mathcal{E} nd_{\mathcal{O}_X} (\mathcal{E}) := \mathcal{E}^{\vee} \otimes_{\mathcal{O}_X} \mathcal{E}$.

\begin{definition}
A holomorphic vector bundle $\mathcal{E}$ is said to be
\begin{enumerate}
	\item \textbf{reducible} if it can be written as a direct sum of two or more nontrivial holomorphic subbundles as $\mathcal{E} = \bigoplus_j \mathcal{E}_j$;
	\item \textbf{irreducible} if it is not reducible;
	\item \textbf{simple} if $\mathrm{End}_{\mathcal{O}_X}(\mathcal{E}) = \mathbb{C}$.
\end{enumerate}
\end{definition}
It follows immediately that simple vector bundles are irreducible. The following fact is well-known.
\begin{lem} \textup{(cf.~\cite[Corollary 5.7.14]{Kobook})} \label{stablesimple}
	If $\mathcal{E}$ is slope stable, then it is simple.
\end{lem}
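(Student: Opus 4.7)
The strategy is the classical one: given any holomorphic endomorphism $f \in \mathrm{End}_{\mathcal{O}_X}(\mathcal{E})$, produce a destabilising subsheaf from $f$ unless $f$ is a scalar multiple of the identity. The only nontrivial ingredients are the compactness of $X$ and the definition of slope stability, so the argument fits neatly in the framework recalled just above.

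First I would show that the characteristic polynomial of $f$ has constant coefficients. The coefficients of $\det(t \cdot \mathrm{Id} - f)$ (elementary symmetric polynomials in local eigenvalues) are global sections of $\mathcal{O}_X$, hence constants by compactness of $X$. In particular, the eigenvalues of the fibrewise endomorphism $f_x \in \mathrm{End}(\mathcal{E}_x)$ are independent of $x \in X$. Pick any such eigenvalue $\lambda \in \mathbb{C}$, and set $g := f - \lambda \cdot \mathrm{Id} \in \mathrm{End}_{\mathcal{O}_X}(\mathcal{E})$. The goal will be to prove $g=0$.

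Suppose for contradiction that $g \neq 0$. Consider $\mathcal{K} := \ker(g) \subset \mathcal{E}$, a coherent subsheaf. Since $\lambda$ is a fibrewise eigenvalue of $f$ at every point, $\mathcal{K}$ has strictly positive rank; since $g \neq 0$, $\mathcal{K}$ is a proper subsheaf, so $0 < \mathrm{rk}(\mathcal{K}) < \mathrm{rk}(\mathcal{E})$. The quotient $\mathcal{E}/\mathcal{K}$ is isomorphic to $\mathrm{im}(g) \subset \mathcal{E}$, hence torsion-free, and is again a proper subsheaf of $\mathcal{E}$ of positive rank. Slope stability applied to both $\mathcal{K}$ and $\mathrm{im}(g)$ gives
\[
\mu(\mathcal{K}) < \mu(\mathcal{E}), \qquad \mu(\mathcal{E}/\mathcal{K}) = \mu(\mathrm{im}(g)) < \mu(\mathcal{E}).
\]
On the other hand, the short exact sequence $0 \to \mathcal{K} \to \mathcal{E} \to \mathcal{E}/\mathcal{K} \to 0$ forces $\mu(\mathcal{E})$ to be a convex combination of $\mu(\mathcal{K})$ and $\mu(\mathcal{E}/\mathcal{K})$, which contradicts the two strict inequalities above. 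Hence $g = 0$, i.e.~$f = \lambda \cdot \mathrm{Id}$, and $\mathrm{End}_{\mathcal{O}_X}(\mathcal{E}) = \mathbb{C}$.

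The main (minor) subtlety is ensuring the slope comparison applies cleanly to the possibly non-locally-free sheaves $\mathcal{K}$ and $\mathrm{im}(g)$; this is handled by noting that both are coherent subsheaves of the torsion-free sheaf $\mathcal{E}$, so their slopes are well defined and slope stability directly yields the strict inequalities. No deeper issue is expected.
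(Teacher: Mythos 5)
Your proof is correct. The paper offers no argument of its own for this lemma---it simply cites \cite[Corollary 5.7.14]{Kobook}---and your write-up is precisely the classical argument given there: constancy of the characteristic polynomial by compactness, passage to $g = f - \lambda\,\mathrm{Id}$, and the slope contradiction from the exact sequence $0 \to \ker(g) \to \mathcal{E} \to \mathrm{im}(g) \to 0$ using additivity of rank and degree. The one place worth tightening is the claim that $\mathcal{K}=\ker(g)$ has positive rank: the cleanest justification is that $\det(g)$ vanishes identically (being the constant characteristic polynomial evaluated at $\lambda$), so $g$ drops rank at the generic point, which is what controls the rank of the kernel sheaf.
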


Let $(X,L)$ be a polarised smooth projective variety of complex dimension $n$, and $\mathcal{E}$ be a holomorphic vector bundle over $X$ of rank $r$. Throughout in this paper, we assume $r > 1$.

\begin{definition} \label{defrkdeg}
	Let $\mathcal{F}$ be a coherent sheaf on $X$. Its \textbf{slope} $\mu (\mathcal{F}) = \mu_L (\mathcal{F}) \in \mathbb{Q}$ is defined as
	\begin{equation*}
		\mu (\mathcal{F}) := \frac{\deg (\mathcal{F})}{\mathrm{rk} (\mathcal{F})},
	\end{equation*}
	where $\mathrm{rk} (\mathcal{F}) \in \mathbb{N}$ is the rank of $\mathcal{F}$ where it is locally free (cf.~\cite[p.11]{H-L}), and $\deg (\mathcal{F}) \in \mathbb{Z}$ is defined as $\int_X c_1 (\det \mathcal{F}) c_1(L)^{n-1}/(n-1)!$ (where $\det \mathcal{F}$ is a line bundle defined in terms of the locally free resolution of $\mathcal{F}$, see \cite{detdiv}, \cite[V.6]{Kobook}  for details).
\end{definition}

In general we should define the rank and degree in terms of the coefficients of the Hilbert polynomial \cite[Definitions 1.2.2 and 1.2.11]{H-L} and hence they are a priori rational numbers. For us, however, they are defined as integers and the above definition suffices since $X$ is smooth.

The following stability notion was first introduced by Mumford for Riemann surfaces, which was generalised to higher dimensional varieties by Takemoto by choosing a polarisation $L$.

\begin{definition}[Slope stability] \label{defmtstab}
	A holomorphic vector bundle $\mathcal{E}$ is said to be {\bf slope stable} (or Mumford--Takemoto stable) if for any coherent subsheaf $\mathcal{F} \subset \mathcal{E}$ with $0 < \mathrm{rk} (\mathcal{F}) < \rk (\mathcal{E})$ we have $\mu(\mathcal{E}) > \mu(\mathcal{F})$. $\mathcal{E}$ is said to be {\bf slope semistable} if the same condition holds with non-strict inequality, and {\bf slope polystable} if it is a direct sum of slope stable bundles with the same slope.
\end{definition}

Finally, recall the following definition (see \cite[Definitions 1.7.1 and 1.7.3]{H-L}).

\begin{definition}
	A coherent sheaf $\mathcal{F}$ is said to be \textbf{$k$-regular} if $H^i (\mathcal{F}(k-i)) =0$ for all $i >0$. The \textbf{Castelnuovo--Mumford regularity} of $\mathcal{F}$ is the integer defined by
	\begin{equation*}
		\mathrm{reg}(\mathcal{F}) := \inf_{k \in \mathbb{Z}} \{ \mathcal{F} \text{ is $k$-regular.} \} .
	\end{equation*}
\end{definition}

\subsection{Fubini--Study metrics} \label{scfsmetcs}

We recall some basic facts about the Fubini--Study metrics. The reader is referred to \cite{M-M,P-S03,Wang1}, and also \cite[Section 1]{HK1}, for further details of what is presented below.

The key ingredient is the following vector bundle version of the Kodaira embedding. Suppose that $\mathcal{E}$ is $k$-regular and $\rk (\mathcal{E})=r$. Then $\mathcal{E}(k)$ is globally generated, and hence there exists a holomorphic map 
\begin{equation*}
	\Phi: X \to \mathrm{Gr}(r , H^0(\mathcal{E}(k))^{\vee})
\end{equation*}
to the Grassmannian of $r$-planes (rather than quotients), such that the pullback under $\Phi$ of the universal bundle (i.e.~the dual of the tautological bundle) is isomorphic to $\mathcal{E}(k)$. 

Recall that positive definite hermitian forms on $H^0 (\mathcal{E}(k))$ define hermitian metrics on the universal bundle on $\mathrm{Gr}(r , H^0(\mathcal{E}(k))^{\vee})$, called the Fubini--Study metric on the Grassmannian (see e.g.~\cite[Section 5.1.1]{M-M}). By pulling them back by $\Phi$, we have hermitian metrics on $\mathcal{E}$ that are also called Fubini--Study metrics, which can also be defined as follows.

\begin{definition} \label{deffsdual}
	Suppose that we fix a reference hermitian metric $h_{\mathrm{ref}}$ on $\mathcal{E}$ and $h_L$ on $\mathcal{O}_X(1)$. Defining a positive definite hermitian form $H_{\mathrm{ref}}$ on $H^0(\mathcal{E}(k))$ as the $L^2$-inner product with respect to $h_{\mathrm{ref}} \otimes h_L^{\otimes k}$, there exist $C^{\infty}$-maps
	\begin{equation*}
	Q : \mathcal{E} (k) \to H^0 (\mathcal{E}(k)) \otimes C^{\infty}_X,
\end{equation*}
and
\begin{equation*}
	Q^* :  \overline{H^0 (\mathcal{E}(k))^{\vee}} \otimes \overline{C^{\infty}_X} \to \overline{\mathcal{E}(k)^{\vee}},
\end{equation*}
such that the hermitian metric
	\begin{equation*}
		h_k := Q^*Q \in \Gamma_{C^{\infty}_X} (\mathcal{E}^{\vee} \otimes \overline{\mathcal{E}^{\vee}} )
	\end{equation*}
	agrees with the pullback by $\Phi$ of the Fubini--Study metric on the universal bundle over the Grassmannian, defined by the hermitian form $H_{\mathrm{ref}}$ on $H^0(\mathcal{E}(k))$. The metric $h_k = Q^*Q$ is called the (reference) \textbf{Fubini--Study metric} on $\mathcal{E}$ defined by $H_{\mathrm{ref}}$.
	
	Moreover, given $\sigma \in GL(H^0(\mathcal{E}(k))^{\vee})$, the hermitian metric
	\begin{equation*}
		h_{\sigma} := Q^* \sigma^* \sigma Q \in \Gamma_{C^{\infty}_X} (\mathcal{E}^{\vee} \otimes \overline{\mathcal{E}^{\vee}} )
	\end{equation*}
	agrees with the one defined by the hermitian form $\sigma^* \circ H_{\mathrm{ref}} \circ \sigma$ on $H^0(\mathcal{E}(k))$. We shall also write $\sigma^* \sigma$ for $\sigma^* \circ H_{\mathrm{ref}} \circ \sigma$ for notational simplicity. The metric $h_{\sigma}$ is called the \textbf{Fubini--Study metric} on $\mathcal{E}$ defined by the positive hermitian form $\sigma^* \sigma$ on $H^0(\mathcal{E}(k))$.
\end{definition}

For the above to be well-defined, we need to ensure that the maps $Q^*$ and $Q$ with the stated properties do exist, and this is indeed well-known to be true (cf.~\cite[Remark 3.5]{Wang1} or \cite[Theorem 5.1.16]{M-M}).

\begin{rem} \label{qexplicitk}
When it is necessary to make the exponent $k$ more explicit, we also write $Q^*(k)$ for $Q^*$ and $Q(k)$ for $Q$.
\end{rem}

Definition \ref{deffsdual} allows us to associate a hermitian metric $FS(H)$ on $\mathcal{E}$ to a positive definite hermitian form $H$ on $H^0 (\mathcal{E}(k))$; we simply choose $\sigma \in GL (H^0(\mathcal{E} (k))^{\vee})$ so that $H = \sigma^* \circ H_{\mathrm{ref}} \circ \sigma = \sigma^* \sigma$, and define $FS(H) = h_{\sigma}$ as above. Recall that there is also an alternative definition (\cite{Wang1}, see also \cite[Theorem 5.1.16]{M-M}) of $FS(H)$ by means of the equation
\begin{equation} \label{deffseq}
	\sum_{i=1}^N s_i\otimes s_i^{*_{FS(H)}} = \Id_{\mathcal{E}}
\end{equation}
where $\{s_i\}$ is an $H$-orthonormal basis for $H^0 (\mathcal{E}(k))$ and $s_i^{*_{FS(H)}}$ is the $FS(H)$-metric dual of $s_i$, and $N = N_k = \dim_{\mathbb{C}}H^0(\mathcal{E}(k))$.

Suppose that we write $\mathcal{H}_k$ for the subset of $\mathcal{H}_{\infty}$ consisting of all Fubini--Study metrics defined by hermitian forms on $H^0 (\mathcal{E}(k))$. Although $\mathcal{H}_k$ is a very small subset of $\mathcal{H}_{\infty}$, it is well-known that any element in $\mathcal{H}_{\infty}$ can be approximated by the elements in $\mathcal{H}_k$ by choosing $k$ to be sufficiently large. To prove this result, we need to invoke the well-known asymptotic expansion of the Bergman kernel (also called the Tian--Yau--Zelditch expansion), as stated below.

\begin{thm}[Asymptotic expansion of the Bergman kernel]\label{thmbergexp}
	Suppose that we fix a positively curved hermitian metric $h_L$ on $\mathcal{O}_X (1)$ and take $h \in \mathcal{H}_{\infty}$, which defines a positive definite $L^2$-hermitian form $H$ on $H^0(\mathcal{E}(k))$. The \textbf{Bergman kernel} $B_{k}(h) \in \mathrm{End}_{C^{\infty}_X}(\mathcal{E})$, defined by the equation $B_k(h) \circ FS(H) = h$, satisfies the asymptotic expansion $B_k(h) = \Id_{\mathcal{E}} + O(k^{-1})$; more precisely, there exists a constant $C(h, h_L, p)>0$ depending on $h$, $h_L$, and $p \in \mathbb{N}$ such that
	\begin{equation*}
		\Vert B_k(h) - \Id_{\mathcal{E}} \Vert_{C^p} \le C(h,h_L,p)/k,
	\end{equation*}
	where $\Vert \cdot \Vert_{C^p}$ is the $C^p$-norm on $\mathrm{End}_{C^{\infty}_X}(\mathcal{E})$.
\end{thm}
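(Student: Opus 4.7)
The plan is to recast $B_k(h)$ as the diagonal of the Schwartz kernel of the orthogonal projector $L^2(X,\mathcal{E}(k))\to H^0(\mathcal{E}(k))$ and then run a local analysis around each point of $X$. Explicitly, if $\{s_i\}_{i=1}^{N_k}$ is an $H$-orthonormal basis of $H^0(\mathcal{E}(k))$, the pointwise tensor
\begin{equation*}
\rho_k(x):=\sum_{i=1}^{N_k} s_i(x)\otimes s_i(x)^{*_{h\otimes h_L^k}}
\end{equation*}
lies in $\End(\mathcal{E}_x\otimes L^k_x)=\End(\mathcal{E}_x)$, and combining this with the characterization \eqref{deffseq} of $FS(H)$ gives, up to the natural volume normalisation proportional to $k^n$, the identity $\rho_k=B_k(h)$. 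Thus it suffices to prove a pointwise expansion for $\rho_k$, with control in $C^p$-norm that is uniform on $X$.

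For the pointwise asymptotic expansion I would use a peak-section construction adapted to vector bundles in the spirit of \cite{Wang1}. Around any point $x_0\in X$, choose holomorphic coordinates $(z^1,\dots,z^n)$ and holomorphic frames $\{e_\alpha\}$, $e_L$ that are normal at $x_0$, so that $h_{\alpha\bar\beta}=\delta_{\alpha\beta}+O(|z|^2)$ and $h_L=e^{-|z|^2+O(|z|^4)}$. For each index $\alpha$ and each multi-index $P$ with $|P|\le p+2$, cut off the model section $z^P e_\alpha\otimes e_L^{\otimes k}$ on the ball of radius $k^{-1/2}\log k$ by a smooth bump, and correct the resulting $(0,1)$-form by H\"ormander's $L^2$-estimate with weight $h\otimes h_L^{\otimes k}$. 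The positivity of $L$ gives a spectral gap of order $k$ for $\bar\partial^*\bar\partial+\bar\partial\bar\partial^*$ on $(0,1)$-forms valued in $\mathcal{E}(k)$, so the correction has $L^2$-norm $O(k^{-\infty})$ and the corrected peak sections are mutually orthogonal modulo $O(k^{-\infty})$. Summing their pointwise contributions at $x_0$ reproduces the Bargmann--Fock Bergman kernel on $\mathbb{C}^n$ with values in $\End(\mathcal{E}_{x_0})$, whose diagonal value is a scalar multiple of $k^n\,\Id_{\mathcal{E}_{x_0}}$, yielding the required pointwise expansion.

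To upgrade pointwise convergence to a $C^p$-bound I would invoke the Agmon-type off-diagonal decay of the full Schwartz kernel: from the Bochner--Kodaira--Nakano lower bound of order $k$ on the Kodaira Laplacian one derives
\begin{equation*}
\bigl|B_k(h)(x,y)\bigr|_{h\otimes h_L^k}\le C_p\,k^n\exp\!\bigl(-c\sqrt{k}\,d(x,y)\bigr),
\end{equation*}
which localises all computations to a geodesic ball of radius $O(k^{-1/2}\log k)$. Rescaling by $w=\sqrt{k}z$ converts each derivative in $z$ into a bounded operation in $w$, so the peak-section expansion can be differentiated term by term without loss of powers of $k$. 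The main obstacle will be the bookkeeping of this $C^p$-control: naive differentiation of a Gaussian concentrated on scale $k^{-1/2}$ produces factors of $k^{p/2}$, and the cancellations that extract the correct $O(k^{-1})$ behaviour must be read off from the Bargmann--Fock model combined with the above exponential decay. Consequently the constant $C(h,h_L,p)$ will depend on finitely many derivatives (roughly the $C^{p+2}$-norms) of $h$ and $h_L$, which is entirely sufficient for the application to Theorem \ref{mainthmhk}.
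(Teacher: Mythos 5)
First, a point of order: the paper does not prove Theorem \ref{thmbergexp} at all. It is explicitly treated as the single black-box analytic input of the whole argument, with the proof deferred to \cite{Catlin}, \cite{Wang2}, \cite{M-M} and \cite{BBS2008}, so there is no in-paper argument to compare yours against. Your sketch follows the standard Tian--Wang peak-section route (normal frames, cut-off model sections $z^P e_\alpha\otimes e_L^{\otimes k}$, H\"ormander $L^2$-correction using the $O(k)$ spectral gap, comparison with the Bargmann--Fock model), supplemented by Agmon-type off-diagonal decay, and your identification of $B_k(h)$ with the Bergman density $\sum_i s_i\otimes s_i^{*_{h\otimes h_L^{\otimes k}}}$ via \eqref{deffseq} is correct up to the $k^n$ normalisation you mention. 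As a roadmap this is the right one.

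As a proof, however, it has a genuine gap, and it sits exactly where you flag it. The statement is not merely that $B_k(h)\to\Id_{\mathcal{E}}$ pointwise but that $\Vert B_k(h)-\Id_{\mathcal{E}}\Vert_{C^p}\le C/k$; the entire content is the quantitative rate together with uniform control of $p$ derivatives. Peak sections concentrated at scale $k^{-1/2}$ do, as you say, naively lose a factor $k^{1/2}$ per derivative, and the cancellation that restores the $O(k^{-1})$ bound cannot simply be ``read off'' from the Bargmann--Fock model plus exponential decay: one needs either the full second-order expansion of the peak sections (computing the subleading coefficient, identifying it as a curvature term of size $O(1)$ after division by $k$, and verifying this uniformly in $C^p$-norm), or the local parametrix machinery of \cite{M-M} or \cite{BBS2008}, which constructs the kernel modulo $O(k^{-\infty})$ and differentiates the explicit phase. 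Writing that ``the cancellations must be read off'' is a statement of the problem, not a solution to it. Since the authors themselves stress that no proof of even the weaker Corollary \ref{corthmbergexp} is known that avoids this theorem, the honest conclusion is that your proposal is a correct outline of a known, genuinely hard proof whose decisive estimates remain to be carried out, and it should be presented as such rather than as a proof.
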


\begin{cor} \label{corthmbergexp}
	For any $h \in \mathcal{H}_{\infty}$ and $p \in \mathbb{N}$ there exists a sequence $\{ h_{k,p} \}_{k \in \mathbb{N}}$ of Fubini--Study metrics with $ h_{k,p} \in \mathcal{H}_k$ such that $h_{k,p} \to h$ as $k \to + \infty$ in the $C^p$-norm, i.e.
	\begin{equation*}
		\mathcal{H}_{\infty} = \overline{\cup_{k>>0} \mathcal{H}_k}.
	\end{equation*}
\end{cor}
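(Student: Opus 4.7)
The plan is to deduce the corollary directly from Theorem \ref{thmbergexp} by taking $h_{k,p}$ to be the Fubini--Study metric determined by the $L^2$-hermitian form that $h$ itself induces on $H^0(\mathcal{E}(k))$. The strategy has nothing ad hoc in it: the Bergman kernel identity $B_k(h)\circ FS(H) = h$ explicitly expresses the error between $h$ and its associated Fubini--Study approximation as the endomorphism $B_k(h) - \Id_{\mathcal{E}}$ applied to $h$, and Theorem \ref{thmbergexp} controls precisely this quantity in every $C^p$-norm.

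More concretely, I would proceed as follows. First, fix $h \in \mathcal{H}_\infty$ and $p \in \mathbb{N}$. For each $k$ large enough that $\mathcal{E}(k)$ is globally generated and the vector bundle Kodaira map of Section \ref{scfsmetcs} is defined, let $H_k$ be the positive definite hermitian form on $H^0(\mathcal{E}(k))$ obtained as the $L^2$-inner product with respect to $h \otimes h_L^{\otimes k}$, and set
\begin{equation*}
    h_{k,p} := FS(H_k) \in \mathcal{H}_k .
\end{equation*}
The defining relation of the Bergman kernel in Theorem \ref{thmbergexp} then reads $B_k(h) \circ h_{k,p} = h$, so that we may write
\begin{equation*}
    h - h_{k,p} = \bigl(\Id_{\mathcal{E}} - B_k(h)^{-1}\bigr) \circ h
\end{equation*}
as sections of $\mathcal{E}^\vee \otimes \overline{\mathcal{E}^\vee}$ (viewing $B_k(h)$ as an endomorphism of $\mathcal{E}$ acting by composition with $h$).

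The next step is to turn the $C^p$-estimate $\Vert B_k(h) - \Id_{\mathcal{E}} \Vert_{C^p} \le C(h,h_L,p)/k$ of Theorem \ref{thmbergexp} into an analogous estimate for $B_k(h)^{-1} - \Id_{\mathcal{E}}$. Since $B_k(h) \to \Id_{\mathcal{E}}$ in $C^0$ on the compact manifold $X$, for $k$ sufficiently large $B_k(h)$ is a pointwise invertible endomorphism with inverse uniformly bounded; together with the fact that inversion in $\mathrm{End}_{C_X^\infty}(\mathcal{E})$ is a smooth operation and the chain/Leibniz rule for $C^p$-norms, this yields a constant $C'(h,h_L,p)$ such that
\begin{equation*}
    \Vert B_k(h)^{-1} - \Id_{\mathcal{E}} \Vert_{C^p} \le C'(h,h_L,p)/k
\end{equation*}
for all $k$ large enough. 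Applying this bound together with the continuity of composition in $C^p$ to the identity displayed above gives
\begin{equation*}
    \Vert h - h_{k,p} \Vert_{C^p} \le C'(h,h_L,p) \, \Vert h \Vert_{C^p}/k \lto 0,
\end{equation*}
which is exactly the claimed convergence $h_{k,p} \to h$ in $C^p$-norm. Taking the union over $k$ and the closure yields the second formulation $\mathcal{H}_\infty = \overline{\cup_{k \gg 0} \mathcal{H}_k}$.

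The only nontrivial step in this plan is the passage from the $C^p$-estimate on $B_k(h)$ to one on $B_k(h)^{-1}$; everything else is a matter of unwinding definitions. This step, however, is routine because $\mathrm{End}_{C_X^\infty}(\mathcal{E})$ with the $C^p$-norm is a Banach algebra on a compact manifold, and a small $C^0$-perturbation of the identity endomorphism remains invertible with inverse close to the identity in every $C^p$-norm (via the Neumann-type expansion $B_k(h)^{-1} = \Id_{\mathcal{E}} + \sum_{j \ge 1}(\Id_{\mathcal{E}} - B_k(h))^j$ whose tail is controlled by $1/k$). The main content is therefore genuinely supplied by Theorem \ref{thmbergexp}, as the statement of the corollary already suggests.
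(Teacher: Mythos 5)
Your proposal is correct and is precisely the standard argument the paper leaves implicit: take $h_{k,p}=FS(H_k)$ for the $L^2$-form $H_k$ induced by $h$ itself, and convert the $C^p$-bound on $B_k(h)-\Id_{\mathcal{E}}$ from Theorem \ref{thmbergexp} into one on $B_k(h)^{-1}-\Id_{\mathcal{E}}$ via the Banach-algebra/Neumann-series argument. The paper states the corollary without proof as an immediate consequence of the Bergman kernel expansion, and your write-up supplies exactly the intended derivation.
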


Although the above corollary is all we need in this paper, there is no known proof of it that is not based on Theorem \ref{thmbergexp}, which is a deep result in analysis. Corollary \ref{corthmbergexp}, which we rely on as a foundational result, plays an important role in what follows.

For the proof of Theorem \ref{thmbergexp}, the reader is referred to \cite{Catlin}, \cite{Wang2}; see also the book \cite{M-M} and references therein. An elementary proof can be found in \cite{BBS2008}.

\subsection{Donaldson's functional and the Hermitian--Einstein equation}\label{defMDon}
Let $(X,L)$ be a polarised smooth complex projective variety, as before, with $\Vol_L := \int_X c_1(L)^n/n!$, and $\mathcal{E}$ be a holomorphic vector bundle of rank $r >1$. The following functional plays a central role in this paper and the prequel \cite{HK1}.

\begin{definition}
 Given two hermitian metrics $h_0$ and $h_1$ on $\mathcal{E}$, \textbf{the Donaldson functional} ${\mathcal M}^{Don} : \mathcal{H}_{\infty}\times \mathcal{H}_{\infty} \to \mathbb{R}$ is defined as 
 \begin{equation*}
 	{\mathcal M}^{Don}(h_1,h_0) := \int_0^1 dt \int_X \tr \left( h_t^{-1} \partial_t h_t \cdot \left(\Lambda_\omega F_t -  \frac{\mu(\mathcal{E})}{\mathrm{Vol}_L} \Id_{\mathcal{E}} \right) \right) \frac{\omega^{n}}{n!},
 \end{equation*}
 where $\{ h_t \}_{0 \le t \le 1} \subset \mathcal{H}_{\infty}$ is a smooth path of hermitian metrics between $h_0$ to $h_1$, and $F_t$ denotes ($\ai / 2 \pi$) times the Chern curvature of $h_t$ with respect to the fixed holomorphic structure of $\mathcal{E}$. Our convention is that the second argument of ${\mathcal M}^{Don}$ is fixed as a reference metric.
\end{definition}

We recall some basic properties of this functional that are established in \cite{Don1985}, while the reader is also referred to \cite[Section 6.3]{Kobook} for more details. First of all it is well-defined, i.e.~does not depend on the path $\{ h_t \}_{0 \le t \le 1}$ chosen to connect $h_0$ and $h_1$ (cf.~\cite[Lemma 6.3.6]{Kobook}); note that this easily implies the following \textbf{cocycle property}
\begin{equation} \label{cocyclemdon}
	\mathcal{M}^{Don} (h_2,h_0) = \mathcal{M}^{Don} (h_2,h_1)+ \mathcal{M}^{Don} (h_1,h_0) ,
\end{equation}
for any $h_0 , h_1 , h_2 \in \mathcal{H}_{\infty}$. In particular, this implies that $\mathcal{M}^{Don}(-,h_0)$ is invariant under an overall constant scaling, since
\begin{equation} \label{mdsclinv}
		{\mathcal M}^{Don}(e^{c} h,h_0) = {\mathcal M}^{Don}(e^{c}h,h)+ {\mathcal M}^{Don}(h,h_0) = {\mathcal M}^{Don}(h,h_0),
\end{equation}
by recalling $\mathcal{M}^{Don} (e^{c} h,h)=0$ for any constant $c \in \mathbb{R}$ \cite[Lemma 6.3.23]{Kobook}.

Second, the critical point of $\mathcal{M}^{Don}(-,h_{\mathrm{ref}})$ is the following object.

\begin{definition}
	A hermitian metric $h \in \mathcal{H}_{\infty}$ is called a \textbf{Hermitian--Einstein metric} if it satisfies $$\Lambda_\omega F_h = \frac{\mu(\mathcal{E})}{\Vol_L} \Id_{\mathcal{E}},$$
where $\Lambda_\omega$ is the contraction with respect to the K\"ahler metric $\omega$ on $X$.
\end{definition}

This is a consequence of the following lemma.

\begin{lem} \textup{(cf.~\cite[Section 1.2]{Don1985})} \label{derivmdon}
	Fixing a reference metric $h_{\mathrm{ref}} \in \mathcal{H}_{\infty}$, we have
	\begin{equation*}
		\frac{d}{dt} \mathcal{M}^{Don} (h_t , h_{\mathrm{ref}}) = \int_X \tr \left( h_t^{-1} \partial_t h_t \left( \Lambda_{\omega} F_t - \frac{\mu(\mathcal{E})}{\mathrm{Vol}_L} \Id_{\mathcal{E}} \right) \right) \frac{\omega^{n}}{n!}
	\end{equation*}
	for a path $\{ h_t \}_{0 \le t \le 1} \subset \mathcal{H}_{\infty}$ of smooth hermitian metrics with $h_0 =h_{\mathrm{ref}}$.
\end{lem}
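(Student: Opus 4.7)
The plan is to deduce the first-variation formula directly from the cocycle property \eqref{cocyclemdon} together with the path-independence of $\mathcal{M}^{Don}$, reducing the entire computation to the fundamental theorem of calculus. First I would record a mild reparametrization invariance of the defining integral: for any smooth path $\{g_s\}_{a\le s\le b} \subset \mathcal{H}_{\infty}$ connecting $h_0$ to $h_1$, the identity
\begin{equation*}
  \mathcal{M}^{Don}(h_1,h_0) = \int_a^b ds \int_X \tr\!\left( g_s^{-1}\partial_s g_s\left(\Lambda_\omega F_{g_s} - \frac{\mu(\mathcal{E})}{\Vol_L}\Id_{\mathcal{E}}\right)\right) \frac{\omega^n}{n!}
\end{equation*}
holds. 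This is immediate from the affine change of variable $u = (s-a)/(b-a)$ from $[a,b]$ to $[0,1]$: the factor $(b-a)$ produced when $\partial_u$ is converted to $\partial_s$ cancels against the Jacobian $ds = (b-a)\, du$, while $F_{g_s}$ depends only on the metric itself and not on its parametrization.

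Next, for any $t_0, t \in [0,1]$ the cocycle property gives
\begin{equation*}
  \mathcal{M}^{Don}(h_t,h_{\mathrm{ref}}) - \mathcal{M}^{Don}(h_{t_0},h_{\mathrm{ref}}) = \mathcal{M}^{Don}(h_t, h_{t_0}),
\end{equation*}
and the right-hand side can be evaluated on the restricted path $\{h_s\}_{t_0\le s\le t}$ by means of the formula above, yielding
\begin{equation*}
  \mathcal{M}^{Don}(h_t,h_{\mathrm{ref}}) - \mathcal{M}^{Don}(h_{t_0},h_{\mathrm{ref}}) = \int_{t_0}^t f(s)\, ds,
\end{equation*}
where $f(s):=\int_X \tr\!\left(h_s^{-1}\partial_s h_s\left(\Lambda_\omega F_s-\frac{\mu(\mathcal{E})}{\Vol_L}\Id_{\mathcal{E}}\right)\right)\frac{\omega^n}{n!}$. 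Since the path $\{h_s\}$ is smooth, $f$ is continuous in $s$, so differentiating at $s=t_0$ via the fundamental theorem of calculus produces exactly the stated identity.

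I do not foresee any serious obstacle: the lemma is essentially a tautological consequence of the definition once the cocycle property (equivalently, path-independence) is available, and the only mildly technical step is the one-line reparametrization check of the first displayed formula above.
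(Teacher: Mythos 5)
Your proof is correct, and since the paper supplies no argument for this lemma (it simply cites Donaldson), your route via the cocycle property, restriction of the path to $[t_0,t]$ with the affine reparametrization check, and the fundamental theorem of calculus is exactly the standard argument one would give from the stated definition and its path-independence. No gaps.
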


The final important point is that $\mathcal{M}^{Don}$ is \textit{convex} along geodesics in $\mathcal{H}_{\infty}$, where the geodesics are defined as follows (cf.~\cite[Section 6.2]{Kobook}).

\begin{definition} \label{defgeod}
A path $\{ h_s \}_{s \in \mathbb{R}} \subset \mathcal{H}_{\infty}$ is called a \textbf{geodesic} in $\mathcal{H}_{\infty}$ if it satisfies
\begin{equation} \label{geodcalh}
	\partial_s (h_s^{-1} \partial_s h_s ) =0,
\end{equation}
as an equation in $\mathrm{End}_{C^{\infty}_X}(\mathcal{E})$; an overall constant scaling $h_s := e^{bs}h_0$ for some $b \in \mathbb{R}$ will be called a \textbf{trivial} geodesic.	
\end{definition}
An important point is that $\mathcal{H}_{\infty}$ is \textit{geodesically complete}; for any $h_0, h_1 \in \mathcal{H}_{\infty}$ there exists a geodesic path $\{ h_s \}_{0 \le s \le 1}$ connecting them; this can be proved by writing the geodesic explicitly as $h_s = \exp( s \log h_1 h^{-1}_0) h_0$. Thus, geodesic convexity of $\mathcal{M}^{Don}$ and geodesic completeness of $\mathcal{H}_{\infty}$ together imply that a critical point of $\mathcal{M}^{Don}$ has to attain the global minimum. The precise statement is as follows.

\begin{prop} \textup{(cf.~\cite[Section 6.3]{Kobook})} \label{lemmdonconvsm}
 The functional $\mathcal{M}^{Don}$ is convex along geodesics in $\mathcal{H}_{\infty}$, and its critical point (if exists) attains the global minimum. Moreover, $\mathcal{M}^{Don}$ is strictly convex along nontrivial geodesics if $\mathcal{E}$ is irreducible (in particular if $\mathcal{E}$ is simple) and in this case the critical point (if exists) is unique up to an overall constant scaling.
\end{prop}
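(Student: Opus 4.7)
My plan is to prove convexity by a direct computation of $\frac{d^2}{ds^2}\mathcal{M}^{Don}(h_s, h_{\mathrm{ref}})$ along a geodesic $\{h_s\}$, showing that the result is a manifestly non-negative $L^2$-quantity; the remaining assertions will then follow from convexity together with the geodesic completeness noted just before the statement.

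Setting $u_s := h_s^{-1}\partial_s h_s$, Lemma \ref{derivmdon} gives
\[
\frac{d}{ds}\mathcal{M}^{Don}(h_s, h_{\mathrm{ref}}) = \int_X \tr\!\left(u_s\!\left(\Lambda_\omega F_s - \tfrac{\mu(\mathcal{E})}{\Vol_L}\Id_\mathcal{E}\right)\right)\frac{\omega^n}{n!}.
\]
Along a geodesic one has $\partial_s u_s = 0$, so only the variation of the curvature term contributes on differentiating again. Using the standard variation formula $\partial_s F_s = \bar\partial \partial_{A_s} u_s$, where $A_s$ denotes the Chern connection of $h_s$ and $\partial_{A_s}$ its $(1,0)$-part on $\End(\mathcal{E})$-valued forms, I obtain
\[
\frac{d^2}{ds^2}\mathcal{M}^{Don}(h_s, h_{\mathrm{ref}}) = \int_X \tr\!\left(u_s\,\Lambda_\omega \bar\partial\partial_{A_s} u_s\right)\frac{\omega^n}{n!}.
\]
I would then convert $\Lambda_\omega$ into a wedge against $\omega^{n-1}/(n-1)!$ via the Lefschetz identity, integrate by parts in $\bar\partial$ (using $\bar\partial\omega=0$), and exploit self-adjointness of $u_s$ with respect to $h_s$, which gives $(\partial_{A_s} u_s)^* = \bar\partial u_s$, to arrive at
\[
\frac{d^2}{ds^2}\mathcal{M}^{Don}(h_s, h_{\mathrm{ref}}) = \int_X \bigl|\bar\partial u_s\bigr|^2_{h_s}\,\frac{\omega^n}{n!} \;\geq\; 0,
\]
which is the desired convexity. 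The main nuisance here will be book-keeping of signs and conventions in the integration-by-parts step.

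Granting convexity, the global-minimum claim is immediate: if $h_0$ is critical and $h_1 \in \mathcal{H}_\infty$ is arbitrary, geodesic completeness supplies a geodesic $\{h_s\}_{s \in [0,1]}$ joining them, and $f(s) := \mathcal{M}^{Don}(h_s, h_{\mathrm{ref}})$ is convex with $f'(0)=0$, whence $f(1) \ge f(0)$. For strict convexity in the irreducible case, suppose the second derivative vanishes somewhere along a nontrivial geodesic. Then $\bar\partial u_s = 0$ there, making $u_s$ a holomorphic endomorphism of $\mathcal{E}$ that is self-adjoint for $h_s$. If $\mathcal{E}$ is simple, Schur's lemma forces $u_s = c\Id_\mathcal{E}$ with $c \in \mathbb{R}$ immediately. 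In the merely irreducible case, the coefficients of the characteristic polynomial of $u_s$ are global holomorphic functions on $X$ and hence constant, and self-adjointness renders the eigenvalues real constants $\lambda_1 < \cdots < \lambda_k$; each eigenspace $\ker(u_s - \lambda_i \Id_\mathcal{E})$ is a holomorphic subbundle (constant rank by self-adjoint diagonalisability), producing a holomorphic direct-sum decomposition of $\mathcal{E}$, so irreducibility forces $k = 1$ and again $u_s = c\Id_\mathcal{E}$. In either case the geodesic is trivial, contradicting nontriviality. Finally, if $h_0, h_1$ are both critical points, they both achieve the global minimum, so $\mathcal{M}^{Don}$ is constant along the geodesic joining them; strict convexity forces that geodesic to be trivial, hence $h_1 = e^c h_0$ for some $c \in \mathbb{R}$.
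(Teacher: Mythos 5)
Your proposal is correct and follows essentially the same route as the paper's own proof (Proposition \ref{lemmdonconvH} in the appendix): the second variation along a geodesic reduces, via the curvature variation formula and the geodesic equation, to the $L^2$-norm of a first-order derivative of $u_s$ (your $\Vert\bar\partial u_s\Vert^2$ equals the paper's $\Vert\nabla^{1,0,\mathrm{End}}u_s\Vert^2$ since $u_s$ is $h_s$-self-adjoint), and the equality case is analysed by showing a holomorphic self-adjoint endomorphism with constant eigenvalues splits $\mathcal{E}$ holomorphically, contradicting irreducibility. Your derivation of eigenvalue constancy via the characteristic polynomial is a cosmetic variant of the paper's parallelism argument, and the global-minimum and uniqueness deductions from geodesic completeness are identical.
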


Although this is a well-known result, we provide a self-contained proof of it in the appendix (see Proposition \ref{lemmdonconvH}) so as to make clear that its proof carries over to give a slightly stronger version that is adapted to the $C^p$-completion of $\mathcal{H}_{\infty}$; see Proposition \ref{lemmdonconv} in the appendix for the precise statement.

\section{Elementary uniform estimates for Fubini--Study metrics}\label{FSestim}

We prove some elementary estimates that we need in the proof of Theorem \ref{mainthmhk}. The content of the main result in this section (Proposition \ref{parczestzeta}) can be summarised as follows: if $Q^*(k)Q(k)$ (as introduced in Section \ref{scfsmetcs}, see also Remark \ref{qexplicitk}) converges in $C^{2p}$, the $C^p$-norms of Fubini--Study metrics $Q^*(k) e^{\zeta^*} e^{\zeta} Q(k)$ can be controlled by the operator norm of $\zeta \in \mathfrak{gl} (H^0(\mathcal{E}(k))^{\vee})$ \textit{irrespectively of $k$}. This follows form a uniform estimate stated in Proposition \ref{parczest}, and the results proved in this section will play an important role in Section \ref{stabtocan}. \\

We set our notational convention in this section as follows. We fix a basis $(s_1 , \dots s_N)$ for $H^0(\mathcal{E}(k))$ for each $k$, an orthonormal frame $(e_1 , \dots , e_r)$ with respect to $h_{\mathrm{ref}}$, and also trivialising open sets in $X$ that cover $X$. This means that $Q(k)^*$ (resp.~$Q(k)$) will be regarded as an $r \times N$ (resp.~$N \times r$) matrix that depends smoothly on the base coordinates, on each trivialising neighbourhood. Indices $i,j, \dots$ will run from 1 to $N$ (which grows as $k$ grows), and indices $\alpha, \beta, \dots$ will run from 1 to $r$ (rank of $\mathcal{E}$). Moreover, we write $\partial_x$ for the shorthand to denote differentiation in base coordinates; $\partial_z$ and $\bar{\partial}_z$ will also be used for the derivatives in (local) holomorphic and anti-holomorphic coordinates on $X$. Our computation below will all be local, with the conventions set as above.

\begin{prop} \label{parczest}
	Suppose that we have a sequence $\{ h_k \}_{k \in \mathbb{N}}$ of Fubini--Study metrics $h_k := Q(k)^*Q (k) \in \mathcal{H}_k$ that converges to $h_{\mathrm{ref}}$ in the $C^{2p}$-norm as $k \to \infty$. Then there exists a constant $C_l = C(h_{\mathrm{ref}}, l)>0$ which depends only on $h_{\mathrm{ref}}$ and $0 \le l \le p$, $l \in \mathbb{Z}$, such that
	\begin{equation*}
		\sum_{i=1}^N \sum_{\alpha =1}^r | \partial_x^l  Q(k)^*_{\alpha i}|^2 < C_l
	\end{equation*}
	holds uniformly for all $k$, where the notational convention above is understood.
\end{prop}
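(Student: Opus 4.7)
The starting point is the factorisation $h_k = Q^*(k)Q(k)$ combined with the identification of $Q(k)$ as the complex conjugate transpose of $Q^*(k)$, so that $Q(k)_{i\alpha} = \overline{Q^*(k)_{\alpha i}}$. With $T_l := \sum_{\alpha,i}|\partial_x^l Q^*_{\alpha i}|^2$, this identification gives
\[
T_l = \tr\bigl((\partial_x^l Q^*)(\partial_x^l Q)\bigr),
\]
and in particular $T_0 = \tr(h_k)$. Since $h_k \to h_{\mathrm{ref}}$ in $C^0$ by hypothesis, $T_0$ is uniformly bounded in $k$, which settles the case $l = 0$.

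For $l \ge 1$, the plan is to differentiate $Q^*Q = h_k$ by $\partial_x^{2l}$ using Leibniz, take trace, and isolate the central ($a=b=l$) term:
\[
\binom{2l}{l}T_l = \partial_x^{2l}\tr(h_k) - \sum_{\substack{a+b=2l\\ a\ne l}}\binom{2l}{a}\tr\bigl((\partial_x^a Q^*)(\partial_x^b Q)\bigr).
\]
Cauchy--Schwarz bounds each off-diagonal trace by $\sqrt{T_a T_b}$, and the first term on the right is uniformly bounded for each $l \le p$ by the assumed $C^{2p}$-convergence; this yields
\[
\binom{2l}{l}T_l \le |\partial_x^{2l}\tr(h_k)| + \sum_{\substack{a+b=2l\\ a\ne l}}\binom{2l}{a}\sqrt{T_a T_b}.
\]

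The hard part, and the main obstacle, is that the off-diagonal pairs $(a,b)$ with $a<l$ force $b=2l-a>l$, so higher-order $T_b$ appear on the right and a direct bottom-up induction on $l$ does not close. The plan is a simultaneous bootstrap across all $l \in \{0,1,\dots,p\}$: apply the weighted Young inequality $\sqrt{T_aT_b} \le \varepsilon T_b + (4\varepsilon)^{-1}T_a$, choose $\varepsilon$ small enough that the strict dominance of the central binomial coefficient $\binom{2l}{l}$ over its neighbours $\binom{2l}{a}$ (for $a \ne l$) produces a strictly diagonally-dominant square linear system in $(T_0,T_1,\dots,T_p)$, and invert it to obtain $T_l \le C_l$ with $C_l$ depending only on $l$ and $\|h_{\mathrm{ref}}\|_{C^{2p}}$. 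To control the $T_b$ with $b \in (p,2p]$ that still appear in the $T_l$-inequality when $l$ is close to $p$, the plan is to pass to a holomorphic trivialisation of $\mathcal{E}$ and $L$, where each $Q^*_{\alpha i}$ splits as a smooth envelope times components of holomorphic sections of $\mathcal{E}(k)$; interior Cauchy-type estimates on the holomorphic factor, together with the identity $\sum_i (\partial_z^l s_i) \otimes \overline{(\partial_z^l s_i)} = \partial_z^l \bar\partial_z^l(h_{\mathrm{ref}}h_k^{-1})$ valid in a holomorphic frame, convert the remaining higher-order bounds into integral conditions accessible from the $C^{2p}$-control of $h_k^{-1}$ alone.
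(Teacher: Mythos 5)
Your case $l=0$ is correct and matches the paper. The rest of the scheme has a genuine gap, and it is structural rather than a missing detail. The inequality $\binom{2l}{l}T_l \le |\partial_x^{2l}\tr(h_k)| + \sum_{a+b=2l,\,a\neq l}\binom{2l}{a}\sqrt{T_aT_b}$ is derived using only the identity $Q^*Q=h_k$ together with Leibniz and Cauchy--Schwarz. But that identity alone cannot imply any bound on the derivatives of $Q^*$: it is invariant under $Q\mapsto U(x)Q$ for an arbitrary $x$-dependent unitary $U$, and already in the $N=r=1$ case the replacement $Q^*\mapsto e^{\imag\phi(x)}Q^*$ with $\phi$ rapidly oscillating makes every $T_l$, $l\ge 1$, arbitrarily large while all of your inequalities remain satisfied (both sides blow up together). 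Consequently no algebraic inversion of that system -- diagonal dominance, Young's inequality, or otherwise -- can produce the bound $T_l\le C_l$. Concretely, the system in $(T_0,\dots,T_p)$ never closes: for $l$ near $p$ the right-hand side contains $T_b$ with $b$ up to $2p$, and these unknowns have no controlling equations; enlarging the range of $l$ only pushes the problem further out.

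The paper's proof uses exactly the extra input that your scheme omits: the entries of $Q$ are not an arbitrary square root of $h_k$ but the coefficients of the \emph{holomorphic} sections $s_i$ in an $h_k$-orthonormal frame $e_\alpha(k)$, via $\sum_\alpha \tilde{Q}(k)_{i\alpha}e_\alpha(k)=s_i$. Applying $\bar{\partial}_z$ and using $\bar{\partial}_z s_i=0$ expresses $\bar{\partial}_z\tilde{Q}(k)$ algebraically in terms of $\tilde{Q}(k)$ itself (order zero) through a connection matrix $R(k,1)$ that is uniformly controlled because $h_k\to h_{\mathrm{ref}}$; the $\partial_z$-derivative picks up the extra term $(\partial_z s_i)_\gamma$, which is bounded uniformly in $k$ by the reproducing identity $\sum_i(\partial_z^l s_i)_\alpha\otimes(\bar{\partial}_z^l\bar{s}_i)_\beta=(\partial_z^l\bar{\partial}_z^l e^{-K})_{\alpha\beta}$ with $h_k=e^K$, which needs only $C^{2l}$-control of $h_k$ for $l\le p$ (this is where the $C^{2p}$ hypothesis is consumed). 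Higher $l$ then follows by induction, always reducing to \emph{lower}-order data. Your final paragraph does name the right objects (the holomorphic trivialisation, the identity for $\sum_i\partial_z^l s_i\otimes\overline{\partial_z^l s_i}$), but you deploy them only as a patch for the out-of-range terms $T_b$, $b\in(p,2p]$, rather than as the engine of the whole argument, and you do not carry out the frame computation that converts control of $\partial_z^l s_i$ into control of $\partial_x^l Q(k)^*$; note also that a naive holomorphic trivialisation of $\mathcal{E}(k)=\mathcal{E}\otimes L^{\otimes k}$ introduces a factor whose derivatives grow with $k$, which is precisely what the $h_k$-orthonormal frame bookkeeping in the paper is designed to absorb. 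To repair the proof you should make the holomorphic-section identity the first step, not the last.
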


\begin{proof}
Writing $(h_k)_{\alpha \beta}$ for the $(\alpha , \beta)$-th entry of $h_k$, we first observe $(h_k)_{\alpha \beta} = \sum_{i=1}^N Q (k)^*_{\alpha i} Q (k)_{ i \beta} = \sum_{i=1}^N Q (k)^*_{\alpha i } \overline{Q (k)^*_{\beta i} }$. Thus we have
	\begin{equation} \label{eqcsleqzr}
		\sum_{i=1}^N | Q (k)^*_{\alpha i}|^2 = (h_k)_{\alpha \alpha} \le 2 (h_{\mathrm{ref}})_{\alpha \alpha},
	\end{equation}
	by $h_k \to h_{\mathrm{ref}}$ ($k \to \infty$) in $C^{2p}$, which establishes the case $l=0$ of our claim by summing over $\alpha = 1 , \dots , r$.
	
	We now proceed to the case $l = 1$. We shall prove
	\begin{align}
		\sum_{i=1}^N \sum_{ \alpha =1}^r | \partial_z Q(k)^*_{i \alpha} |^2  &\le C_1 \label{clmdelbar} \\
		\sum_{i=1}^N \sum_{ \alpha =1}^r | \bar{\partial}_z Q(k)^*_{i \alpha} |^2  &\le C_1 \label{clmdel},
	\end{align}
	for a constant $C_1$ that does not depend on $k$.
	
	Suppose that we write $(e(k)_1 , \dots , e(k)_r)$ for an $h_k$-orthonormal frame, regarded as a vector-valued function on (each trivialising neighbourhood of) $X$.

	Observe that the local frame $(e_1 , \dots , e_r)$ (with respect to $h_{\mathrm{ref}}$) and the one $(e(k)_1 , \dots , e(k)_r)$ (with respect to $h_k$) can be related as
	\begin{equation*}
		\begin{pmatrix}
			e(k)_1 \\
			\vdots \\
			e(k)_r
		\end{pmatrix}
		= P(k)
		\begin{pmatrix}
			e_1 \\
			\vdots \\
			e_r
		\end{pmatrix}
	\end{equation*}
	by an $r \times r$-matrix valued function $P(k)$ defined on each trivialising neighbourhood, which we may assume is hermitian. Since $h_k \to h_{\mathrm{ref}}$ in $C^{2p}$ (and hence in $C^p$) as $k \to \infty$, we can choose the above orthonormal frames so that
	\begin{equation} \label{convpkkrdlt}
		P(k)_{\alpha \beta} - \delta_{\alpha \beta} \to 0
	\end{equation}
	in $C^p$ as $k \to \infty$, where $\delta_{\alpha \beta}$ is the Kronecker delta. In particular, $\Vert P(k)_{\alpha \beta} - \delta_{\alpha \beta}\Vert_{C^p}$ is bounded uniformly for all $k$.
	
	We write $\tilde{Q}(k)^*_{\alpha i}$ for the $r \times N$-matrix valued function, representing $Q(k)^*$ with respect to the $h_k$-orthonormal frame $(e(k)_1 , \dots , e(k)_r)$ and the fixed basis $(s_1 , \dots s_N)$ for $H^0(\mathcal{E}(k))$. By using the matrix $P(k)$ above, this can be written explicitly as
	\begin{equation*}
		Q(k)^*_{\alpha i} = \sum_{\beta =1}^r P(k)_{\alpha \beta} \tilde{Q}(k)^*_{\beta i},
	\end{equation*}
	and
	\begin{equation} \label{relqqtildep}
		Q(k)_{i \alpha} = \sum_{\beta=1}^r  \tilde{Q}(k)_{i \beta} P(k)^*_{\beta \alpha}.
	\end{equation}

	By the definitions of $Q(k)$ and $P(k)$ (recall also \cite[Remark 3.5]{Wang1}, \cite[Theorem 5.1.16]{M-M} and \cite[Lemma 1.13]{HK1}), we have
	\begin{equation} \label{relqeksi}
		\sum_{\alpha=1}^r \tilde{Q}(k)_{i \alpha} e_{\alpha} (k) = s_i (x).
	\end{equation}
	Since $s_i$ is a holomorphic section, we apply $\bar{\partial}_z$ on both sides of the above equation to get
	\begin{equation} \label{equnifahol}
		\sum_{\alpha=1}^r ( \bar{\partial}_z \tilde{Q}(k)_{i \alpha}) e_{\alpha} (k) + \sum_{\alpha=1}^r \tilde{Q}(k)_{i \alpha} ( \bar{\partial}_z e_{\alpha} (k) ) = 0 .
	\end{equation}
	On the other hand, observe that there exists a (local) $r \times r$-matrix $R(k,1)$ such that
	\begin{equation} \label{eqdefrkone}
		\bar{\partial}_z e_{\alpha} (k) = \sum_{\beta=1}^r e_{\beta} (k) R_{\beta \alpha} (k,1) ,
	\end{equation}
so that we can re-write (\ref{equnifahol}) as
	\begin{equation*}
		\sum_{\alpha=1}^r ( \bar{\partial}_z \tilde{Q}(k)_{i \alpha}) e_{\alpha} (k)  = - \sum_{\alpha, \beta =1}^r \tilde{Q}(k)_{i \alpha}  R_{\beta \alpha} (k,1) e_{\beta} (k) .
	\end{equation*}
	The convergence (\ref{convpkkrdlt}) in $C^p$ means that $\bar{\partial}_z e_{\alpha}(k)$ can be controlled uniformly for all $k$, and hence the operator norm of $R(k,1)$ can be controlled uniformly for all $k$. 
	We now apply a map $h_k ( - , e_{\gamma}): \mathcal{E} \to \mathbb{C}$ to the above equation. Since $(e(k)_1 , \dots , e(k)_r)$ is an $h_k$-orthonormal frame, we get
	\begin{equation*}
		\bar{\partial}_z \tilde{Q}(k)_{i \gamma} = - \sum_{\alpha=1}^r \tilde{Q}(k)_{i \alpha}  R_{\gamma \alpha} (k,1) ,
	\end{equation*}
	and by taking the complex conjugate, we get
\begin{equation*}
		\partial_z \tilde{Q}(k)^*_{\gamma i} = - \sum_{\alpha=1}^r \tilde{Q}(k)^*_{\alpha i}  \overline{R_{\gamma \alpha} (k,1) }.
	\end{equation*}
	These two equalities imply
	{\small \begin{align*}
		\sum_{i=1}^N | \partial_z \tilde{Q}(k)^*_{\alpha i } |^2 &=\sum_{i=1}^N (\partial_z \tilde{Q}(k)^*_{\alpha i}) (\bar{\partial}_z \tilde{Q}(k)_{i \alpha}) \\
		&= \sum_{i=1}^N \left( \sum_{\beta=1}^r \tilde{Q}(k)^*_{\beta i} \overline{R_{\alpha \beta} (k,1)}  \right)  \left( \sum_{\beta =1}^r \tilde{Q}(k)_{i \beta}  R_{\alpha \beta} (k,1)  \right) \\
		&= \sum_{i=1}^N \left| \sum_{\beta=1}^r \tilde{Q}(k)^*_{\beta i} \overline{R_{\alpha \beta} (k,1)}   \right|^2 \\
		&\le C(R(k,1)) \left( \sum_{i=1}^N \sum_{\alpha=1}^r |\tilde{Q}(k)^*_{i \alpha}|^2 \right).
	\end{align*}}
	In the above, the constant $C(R(k,1))$ depends only on the operator norm of $R(k,1)$, which can be bounded uniformly for all $k$ by (\ref{convpkkrdlt}). Thus, we have proved that there exists a constant $\tilde{C}_1$ that does not depend on $k$ such that
	\begin{equation*}
		\sum_{i=1}^N | \partial_z \tilde{Q}(k)^*_{i \alpha} |^2  \le \tilde{C}_1 C_0^2,
	\end{equation*}
	by recalling the estimate for the case $l=0$. Recalling (\ref{convpkkrdlt}), (\ref{relqqtildep}), and the estimate for $l=0$, we get $$\sum_{i=1}^N \sum_{ \alpha =1}^r | \partial_z Q(k)^*_{i \alpha} |^2  \le C_1,$$ for some $C_1 > 0$ that does not depend on $k$, which is what we claimed in (\ref{clmdelbar}).

	\medskip
	
	We now prove (\ref{clmdel}). Similarly to (\ref{equnifahol}) we have
	\begin{equation} \label{eqqthols}
		\sum_{\alpha=1}^r ( {\partial}_z \tilde{Q}(k)_{i \alpha}) e_{\alpha} (k) + \sum_{\alpha=1}^r \tilde{Q}(k)_{i \alpha} ( {\partial}_z e_{\alpha} (k) ) = \partial_z s_i (x),
	\end{equation}
	and hence
	\begin{equation} \label{eqthlsdq}
		\partial_z \tilde{Q}(k)_{i \gamma} = - \sum_{\alpha=1}^r \tilde{Q}(k)_{i \alpha}  R_{\gamma \alpha} (k,1) + (\partial_z s_i)_{\gamma},
	\end{equation}
	where we wrote $(\partial_z s_i)_{\gamma} := h_k(\partial_z s_i (x) ,e_{\gamma})$. Again by taking the complex conjugate, we have
	\begin{equation} \label{eqthlsdbq}
		\bar{\partial}_z \tilde{Q}(k)^*_{\gamma i} = - \sum_{\alpha=1}^r \tilde{Q}(k)^*_{\alpha i}  \overline{R_{\gamma \alpha} (k,1)} + (\overline{\partial_z s_i})_{\gamma}.
	\end{equation}

	Thus, in addition to the previous argument for (\ref{clmdelbar}), it is necessary to bound $\sum_{\alpha , i} | (\partial_z s_i )_{\alpha}|^2$ uniformly for all $k$ (see \cite[Section 5.4]{yhextremal} for a similar argument). In fact, we bound all the $l$-th derivatives $\sum_{\alpha , i} | (\partial_z^l s_i )_{\alpha}|^2$, for $1 \le l \le p$, uniformly for all $k$ by assuming the convergence $h_k \to h_{\mathrm{ref}}$ in $C^{2p}$. We fix a local trivialisation and write $h_k = e^{K}$ for a smooth local $r \times r$ hermitian form $K \in \mathrm{End}_{C^{\infty}_X} (\mathcal{E})$. Then, we can re-write the equation (\ref{deffseq}) as
	\begin{equation*}
		\sum_{i=1}^N (s_i)_{\alpha} \otimes (e^K)_{\beta \gamma} (\bar{s}_i)_{\gamma} = \delta_{\alpha \beta},
	\end{equation*}
	where $\alpha , \beta , \gamma$ denotes indices running from 1 to $r$ to denote the endomorphism componentwise, and $s_i$ is regarded as a vector-valued holomorphic function. Applying $e^{-K}$, this implies
	\begin{equation*}
		\sum_{i=1}^N (s_i)_{\alpha} \otimes  (\bar{s}_i)_{\beta} = (e^{-K})_{\alpha \beta}.
	\end{equation*}
	We apply $\partial_z \bar{\partial}_z$ to this equation to get
	\begin{equation*}
		\sum_{i=1}^N (\partial_z s_i)_{\alpha} \otimes  (\bar{\partial}_z \bar{s}_i)_{\beta} = (\partial_z \bar{\partial}_z e^{-K})_{\alpha \beta},
	\end{equation*}
	by noting $\bar{\partial}_z s_i =0$ and its complex conjugate. We can iterate this procedure so that
	\begin{equation*}
		\sum_{i=1}^N (\partial_z^l s_i)_{\alpha} \otimes  (\bar{\partial}_z^l \bar{s}_i)_{\beta} =  (\partial_z^l \bar{\partial}_z^l e^{-K})_{\alpha \beta}.
	\end{equation*}
	We take the $h_k$-metric trace of this to get
	\begin{equation} \label{bdhgders}
		\sum_{\alpha =1}^r \sum_{i=1}^N |(\partial_z^l s_i)_{\alpha}|^2 \le C(2l, h_k),
	\end{equation}
	where $C(2l, h_k)$ is a constant which depends only on the $C^{2l}$-norm of the hermitian metric $h_k$, which is under control as long as $l \le p$ by assuming the convergence $h_k \to h_{\mathrm{ref}}$ in $C^{2p}$.

Thus, (\ref{eqthlsdq}), (\ref{eqthlsdbq}), (\ref{bdhgders}), and Cauchy--Schwarz imply
{\small
	\begin{align*}
		&\sum_{i=1}^N | \bar{\partial}_z \tilde{Q}(k)^*_{i \alpha} |^2 \\
		&= \sum_{i=1}^N \left( \sum_{\beta=1}^r \tilde{Q}(k)^*_{i \beta}  \overline{R_{\alpha \beta} (k,1)} - (\overline{\partial_z s_i})_{\alpha} \right)  \left( \sum_{\beta =1}^r \tilde{Q}(k)_{i \beta}  R_{\alpha \beta} (k,1) - (\partial_z s_i)_{\alpha} \right), \\
		&\le \sum_{i=1}^N \left( \left| \sum_{\beta=1}^r \tilde{Q}(k)^*_{\beta i} \overline{R_{\alpha \beta} (k,1)}   \right| + |(\partial_z s_i)_{\alpha}| \right)^2 \\
		&\le  C(R(k,1)) \sum_{i=1}^N \sum_{ \alpha =1}^r | Q(k)^*_{i \alpha} |^2 + 2 \left( C(2,h_k) C(R(k,1)) \sum_{i=1}^N \sum_{ \alpha =1}^r | Q(k)^*_{i \alpha} |^2 \right)^{1/2}\\
		&\quad + C(2,h_k).
	\end{align*}}
Proceeding exactly as we did to establish (\ref{clmdelbar}), we thus get $$\sum_{i=1}^N \sum_{ \alpha =1}^r | \bar{{\partial}}_z Q(k)^*_{i \alpha} |^2  \le C_1,$$ for some $C_1 > 0$ uniformly for all $k$, establishing (\ref{clmdel}). This completes the proof of the proposition for the case $l=1$.
	
	For $2 \le l \le p$, we proceed by induction. Differentiating (\ref{relqeksi}) $l$ times, we get
	\begin{equation*}
		\sum_{\alpha=1}^r ( \partial^l_x \tilde{Q}(k)_{i \alpha}) e_{\alpha} (k)  + (\text{terms involving } \partial^m_x \tilde{Q}(k)_{i \alpha} \text{ with } m < l )= \partial^l_x s_i (x) .
	\end{equation*}
	We then proceed exactly as we did for the case $l=1$, by decomposing $\partial_x$ in $\partial_z$ and $\bar{\partial}_z$. We replace (\ref{eqdefrkone}) by $\partial^l_x e_{\alpha} (k) = \sum_{\beta=1}^r e_{\beta} (k) R_{\beta \alpha} (k,l)$ for some $R(k,l)$ which can be controlled uniformly for all $k$ due to (\ref{convpkkrdlt}). Using Cauchy--Schwarz and by induction on $l$, we get the claim for all $2 \le l \le p$ by recalling (\ref{bdhgders}).
\end{proof}

\begin{prop} \label{parczestzeta}
	Suppose that we are given a sequence $\{ h_k \}_{k \in \mathbb{N}}$ of Fubini--Study metrics $h_k := Q(k)^*Q (k) \in \mathcal{H}_k$ converging to $h_{\mathrm{ref}}$ in $C^{2p}$. Then, the Fubini--Study metric $h_{k,\zeta} = Q(k)^* e^{\zeta^*} e^{\zeta} Q(k)$ defined by a hermitian $\zeta \in \mathfrak{gl} (H^0( \mathcal{E}(k))^{\vee})$ satisfies the following bound 
	\begin{equation*}
		\Vert h_{k,\zeta} - h_k \Vert_{C^l} < C_l(\Vert\zeta \Vert_{op} , h_{\mathrm{ref}}) ,
	\end{equation*}
	where $C_l(\Vert \zeta \Vert_{op} , h_{\mathrm{ref}})$ is a constant which depends on the operator norm of $\zeta$, $h_{\mathrm{ref}}$, and $0 \le l \le p$, $l \in \mathbb{Z}$, but not on $k$.
\end{prop}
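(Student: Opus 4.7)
The plan is to reduce the estimate to the previous proposition by factoring the difference through $\zeta$. Writing $M := e^{\zeta^*} e^{\zeta} - \mathrm{Id}$, I first observe
\begin{equation*}
    h_{k,\zeta} - h_k \;=\; Q(k)^* \bigl( e^{\zeta^*} e^{\zeta} - \mathrm{Id} \bigr) Q(k) \;=\; Q(k)^* M \, Q(k),
\end{equation*}
which, in the local trivialisation and basis fixed in the section, is the $r\times r$ matrix-valued function whose $(\alpha,\beta)$-entry is $\sum_{i,j=1}^{N} Q(k)^*_{\alpha i}\, M_{ij}\, Q(k)_{j\beta}$. Since $\zeta$ is hermitian, the operator norm of $M$ is controlled solely by $\Vert \zeta \Vert_{op}$, for instance via $\Vert M \Vert_{op} \le e^{2\Vert \zeta \Vert_{op}} - 1$.

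Next I differentiate $\partial_x^l$ the entries and apply the Leibniz rule, noting that $M_{ij}$ is independent of the base coordinates. This yields
\begin{equation*}
    \partial_x^l (h_{k,\zeta} - h_k)_{\alpha\beta} \;=\; \sum_{l_1 + l_2 = l} \binom{l}{l_1} \sum_{i,j=1}^{N} \bigl(\partial_x^{l_1} Q(k)^*_{\alpha i}\bigr) M_{ij} \bigl(\partial_x^{l_2} Q(k)_{j \beta}\bigr).
\end{equation*}
For fixed $l_1,l_2,\alpha,\beta$, the inner double sum is the $(\alpha,\beta)$-entry of a matrix product of an $r \times N$ block $\partial_x^{l_1} Q(k)^*$, the $N \times N$ block $M$, and an $N \times r$ block $\partial_x^{l_2} Q(k)$; by Cauchy--Schwarz in the $\ell^2$-sense (thinking of the $\alpha$-th row and $\beta$-th column as vectors in $\CC^N$),
\begin{equation*}
    \Bigl| \sum_{i,j} (\partial_x^{l_1} Q(k)^*_{\alpha i}) M_{ij} (\partial_x^{l_2} Q(k)_{j\beta}) \Bigr| \;\le\; \Vert M \Vert_{op} \Bigl( \sum_{i} |\partial_x^{l_1} Q(k)^*_{\alpha i}|^2 \Bigr)^{1/2} \Bigl( \sum_{j} |\partial_x^{l_2} Q(k)_{j \beta}|^2 \Bigr)^{1/2}.
\end{equation*}

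Finally, the assumption that $h_k \to h_{\mathrm{ref}}$ in $C^{2p}$ lets me invoke Proposition \ref{parczest}, which yields constants $C_{l_1}, C_{l_2}$ depending only on $h_{\mathrm{ref}}$ (and on $l_1,l_2 \le p$) such that $\sum_{i,\alpha} |\partial_x^{l_1} Q(k)^*_{\alpha i}|^2 \le C_{l_1}$ and similarly for $l_2$, uniformly in $k$. Summing over $\alpha,\beta = 1,\dots,r$ and over the (finitely many) pairs $(l_1,l_2)$ with $l_1+l_2 \le l$, and noting that these estimates are valid on each of the fixed finite cover of trivialising neighbourhoods, produces the desired bound
\begin{equation*}
    \Vert h_{k,\zeta} - h_k \Vert_{C^l} \;\le\; C_l\bigl(\Vert \zeta \Vert_{op}, h_{\mathrm{ref}}\bigr),
\end{equation*}
with no dependence on $k$. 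The main (and essentially only) obstacle is bookkeeping: one has to be careful that the Leibniz expansion factors cleanly into a matrix product with $M$ sandwiched between derivatives of $Q^*$ and $Q$, so that the Cauchy--Schwarz step converts the $k$-dependent size $N_k$ of $M$ into the $k$-independent operator norm bound. Once this is set up, Proposition \ref{parczest} does the rest.
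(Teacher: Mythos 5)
Your proposal is correct and follows essentially the same route as the paper's proof: both reduce the claim to the uniform bounds of Proposition \ref{parczest} by controlling the ``middle'' factor $e^{\zeta^*}e^{\zeta}$ through the operator norm of $\zeta$ and applying Cauchy--Schwarz, the only cosmetic difference being that the paper diagonalises $\zeta$ and sandwiches the diagonal entries between $e^{\pm 2\Vert\zeta\Vert_{op}}(h_k)_{\alpha\alpha}$, whereas you keep $M = e^{\zeta^*}e^{\zeta}-\mathrm{Id}$ as a matrix and invoke $\Vert M\Vert_{op}\le e^{2\Vert\zeta\Vert_{op}}-1$ directly. The Leibniz--plus--Cauchy--Schwarz bookkeeping for the higher derivatives matches the paper's assembly of the $C^l$-norm from the quantities $(h^m_k)_{\alpha\beta}-(h^m_{k,\zeta})_{\alpha\beta}$ with $m\le l$.
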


\begin{proof}
Recalling $\sum_i | Q (k)_{i \alpha}|^2 = (h_k)_{\alpha \alpha}$ as in the proof of Proposition \ref{parczest}, with the notational conventions used therein, we get (up to replacing the basis $(s_1 , \dots , s_N)$ by a unitarily equivalent one if necessary)
	\begin{equation*}
		(h_{k, \zeta})_{\alpha \alpha} = \sum_{i=1}^N e^{2 w_i} | Q (k)_{i \alpha}|^2,
	\end{equation*}
	where $(w_1 , \dots , w_N )$ are the eigenvalues of $\zeta$. This immediately implies
	\begin{equation*}
		 e^{-2 ||\zeta||_{op}} \sum_{i=1}^N | Q (k)_{i \alpha}|^2 < \sum_{i=1}^N e^{2 w_i} | Q (k)_{i \alpha}|^2 < e^{2 ||\zeta||_{op}} \sum_{i=1}^N | Q (k)_{i \alpha}|^2 .
	\end{equation*}
	Now note
	\begin{equation*}
		(h_k)_{\alpha \beta} - (h_{k , \zeta})_{\alpha \beta} = \sum_{i=1}^N \overline{Q (k)_{i \alpha}} (1- e^{2 w_i} ) Q (k)_{ i \beta} .
	\end{equation*}
	Combining the above two estimates with Cauchy--Schwarz (which means that it suffices to evaluate the case $\alpha = \beta$), and also recalling the case $l=0$ of Proposition \ref{parczest},we get
	\begin{equation*}
		|| (h_k)_{\alpha \beta} - (h_{k , \zeta})_{\alpha \beta}||_{C^0} < C_0(|| \zeta||_{op} , h_{\mathrm{ref}}).
	\end{equation*}
	Writing 
	\begin{equation*}
	(h^l_k)_{\alpha \beta} := \sum_{i=1}^N (\partial_x^l Q (k)^*_{\alpha i}) (\partial_x^l Q (k)_{ i \beta}),
	\end{equation*}
	and
	\begin{equation*}
	(h^l_{k, \zeta})_{\alpha \beta} := \sum_{i=1}^N e^{2 w_i} (\partial_x^l Q (k)^*_{\alpha i}) (\partial_x^l Q (k)_{ i \beta}),
	\end{equation*}
	exactly the same argument as above implies
	\begin{equation*}
		|| (h^l_k)_{\alpha \beta} - (h^l_{k , \zeta})_{\alpha \beta}||_{C^0} < C_l(|| \zeta||_{op} , h_{\mathrm{ref}})
	\end{equation*}
	by Proposition \ref{parczest}. By observing that the $C^l$-norm of $(h_k)_{\alpha \beta} - (h_{k , \zeta})_{\alpha \beta}$ can be bounded by a linear combination of $|| (h^m_k)_{\alpha \beta} - (h^m_{k , \zeta})_{\alpha \beta}||_{C^0}$, $m \le l$ (with uniformly bounded coefficients), by means of Cauchy--Schwarz, we get the required estimates.
\end{proof}

\begin{rem} \label{remlbrqlimfs}
A more delicate problem is to establish a \textit{lower} bound for $| Q (k)_{i \alpha}|^2$ that holds independently of $k$, while the uniform upper bound can be obtained as above. With such a lower bound, we can prove inequalities in \cite[Remark 3.10]{HK1}, which in turn proves Hypothesis \ref{conjpczero} that is discussed later in Section \ref{stabtocan}.
\end{rem}

\section{Review of the main results of \cite{HK1}} \label{scrmrhk1}

\subsection{Quot-scheme limit of Fubini--Study metrics} \label{revqslimfs}
We recall some key concepts from \cite{HK1} that we need in what follows, and refer the reader to \cite{HK1} for more details on this section. 

Recall from Section \ref{scfsmetcs} that the Fubini--Study metrics can be written as $h_{\sigma} := Q^* \sigma^* \sigma Q \in \Gamma_{C^{\infty}_X} (\mathcal{E}^{\vee} \otimes \overline{\mathcal{E}^{\vee}} )$ for some $\sigma \in SL(H^0(\mathcal{E}(k))^{\vee})$, up to an overall constant scaling which we ignore for the moment. In particular, choosing $\zeta \in \mathfrak{sl} (H^0(\mathcal{E}(k))^{\vee})$, we have a 1-parameter subgroup (1-PS) $\{ h_{\sigma_t} \}_{t \ge 0}$ of Fubini--Study metrics defined by
\begin{equation*}
	h_{\sigma_t} := Q^* \sigma_t^* \sigma_t Q \in \Gamma_{C^{\infty}_X} (\mathcal{E}^{\vee} \otimes \overline{\mathcal{E}^{\vee}} )
\end{equation*}
with $\sigma_t := e^{\zeta t}$. We call the above $\{ h_{\sigma_t} \}_{t \ge 0}$ the \textbf{Bergman 1-PS} generated by $\zeta$; further, when $\zeta$ has rational eigenvalues, it is called the \textbf{rational Bergman 1-PS}. The main theme of \cite{HK1} is to evaluate the limit of $h_{\sigma_t}$ as $t \to +\infty$ for $\zeta \in \mathfrak{sl} (H^0 (\mathcal{E}(k))^{\vee})$ with rational eigenvalues, in terms of the Quot-scheme limit. Throughout in what follows, we shall assume that the operator norm (i.e. the modulus of the maximum eigenvalue) of $\zeta$ is at most 1, as pointed out in \cite[Remark 2.1]{HK1}.

Suppose that $\zeta \in \mathfrak{sl} (H^0 (\mathcal{E}(k))^{\vee})$ has eigenvalues $w_1 , \dots , w_{\nu} \in \mathbb{Q}$, with the ordering
\begin{equation} \label{ordlambda}
	w_1 > \cdots > w_{\nu} .
\end{equation}
We consider the action of $\zeta$ on $H^0(\mathcal{E}(k))$ which is not the natural dual action, but the one that is natural with respect to certain metric duals (see \cite[(2.6)]{HK1} and the discussion that follows). This yields the weight decomposition
\begin{equation*}
 	H^0 ( \mathcal{E}(k)) = \bigoplus_{i = 1}^{\nu} V_{-w_i , k}
 \end{equation*}
where $\zeta$ acts on $V_{-w_i , k}$ via the $\mathbb{C}^*$-action $T: \mathbb{C}^* \curvearrowright V_{-w_i , k}$ defined by $T \mapsto T^{-w_i}$ (cf.~\cite[Section 2.1]{HK1}). We thus get the filtration
\begin{equation}\label{VT'}
V_{ \le -w_i , k} := \bigoplus_{j= 1}^{i} V_{-w_j, k},
\end{equation}
of $H^0 ( \mathcal{E}(k)) $ by its vector subspaces.

The filtration \eqref{VT'} also gives rise to the one
\begin{equation} \label{filtevs}
	0 \neq \mathcal{E}_{\le -w_{1}} \subset \cdots \subset \mathcal{E}_{\le -w_{\nu}} = \mathcal{E}
\end{equation}
of $\mathcal{E}$ by subsheaves, where $\mathcal{E}_{\le - w_i}$ is a coherent subsheaf of $\mathcal{E}$ defined by the quotient map
\begin{equation*}
	\rho_{\le - w_i} : V_{\le - w_i ,k} \otimes \mathcal{O}_X(-k) \to \mathcal{E}_{\le - w_i}
\end{equation*}
induced from $\rho$. As in \cite[Lemma 2.5]{HK1}, we can modify this filtration on a Zariski closed subset of $X$, to get a filtration
\begin{equation} \label{satfiltevs}
	0 \neq \mathcal{E}'_{\le -w_{1}} \subset \cdots \subset \mathcal{E}'_{\le - w_{\nu}} = \mathcal{E}
\end{equation}
of $\mathcal{E}$ by saturated subsheaves, which will be important later.

Following \cite[Definition 2.2]{HK1}, we can pick a certain subset
\begin{equation} \label{defsubswgt}
\{ w_{\alpha} \}_{\alpha =\hat{1}}^{\hat{\nu}} \subset \{ w_i \}_{i=1}^{\nu}	
\end{equation}
with $\{ \hat{1} , \dots , \hat{\nu} \} \subset \{ 1 , \dots , \nu \}$, by means of the Quot-scheme limit as follows. Considering the quotient map
\begin{equation*}
	\rho : H^0(\mathcal{E}(k)) \otimes \mathcal{O}_X (-k) \to \mathcal{E}
\end{equation*}
for $\mathcal{E}$ and its $\mathbb{C}^*$-orbit
\begin{equation*}
	\rho_T := \rho \circ T^{\zeta} : H^0(\mathcal{E}(k)) \otimes \mathcal{O}_X (-k) \to \mathcal{E}
\end{equation*}
and taking the limit of $T \to 0$, we get a coherent sheaf
\begin{equation*}
	\hat{\rho} : H^0(\mathcal{E}(k)) \otimes \mathcal{O}_X (-k) \to \bigoplus_{i=1}^{\nu} \mathcal{E}_{-w_i}
\end{equation*}
with
\begin{equation*}
	\mathcal{E}_{-w_i} : = \mathcal{E}_{\le -w_i} / \mathcal{E}_{\le -w_{i-1}}
\end{equation*}
as in \cite[Lemma 4.4.3]{H-L}. The subset $\{ \hat{1} , \dots , \hat{\nu} \}$ consists of the indices $i$ such that $\mathrm{rk} (\mathcal{E}_{-w_i}) >0$  (see \cite[Definition 2.2]{HK1} and also Remark \ref{remdefwalpha}). Defining $X^{reg}$ to be the Zariski open subset of $X$ over which $\mathcal{E}_{\le -w_i}$ are all locally free \cite[Definition 2.4]{HK1}, for each $\alpha \in \{ \hat{1} , \dots , \hat{\nu} \}$ we define $E_{-w_{\alpha}}$ to be equal to the quotient (as a $C^{\infty}$ complex vector bundle) of $\mathcal{E}_{\le -w_{\alpha}} |_{X^{reg}}$ by $\mathcal{E}_{\le -w_{\alpha -1}} |_{X^{reg}}$ (see \cite[discussion following Definition 2.4]{HK1}).

Thus there exists a $C^{\infty}$-isomorphism 
\begin{equation} \label{mdualisoev}
\overline{\mathcal{E}^{\vee}} \stackrel{\sim}{\to}  \bigoplus_{\alpha =\hat{1}}^{\hat{\nu}} E_{-w_{\alpha}}
\end{equation}
of complex vector bundles over $X^{reg}$ (see \cite[(2.8)]{HK1}) such that
\begin{equation} \label{defewt}
	e^{wt} := \mathrm{diag} (e^{ w_{\hat{1}} t} , \cdots , e^{w_{\hat{\nu}} t})
\end{equation}
acts on $\bigoplus_{\alpha =\hat{1}}^{\hat{\nu}} E_{-w_{\alpha}}$, with $e^{w_{\alpha}t }$ acting on the factor $E_{-w_{\alpha}}$. We then define
\begin{equation*}
		\hat{h}_{\sigma_t} :=  e^{- w t} h_{\sigma_t}  e^{- w t} \in   \Gamma_{C^{\infty}_{X^{reg}}} ( \mathcal{E}^{\vee} \otimes  \overline{\mathcal{E}^{\vee}}),
\end{equation*}
which we call the \textbf{renormalised Bergman 1-PS} associated to $\sigma_t$ \cite[Definition 2.9]{HK1}, in which the $C^{\infty}$-isomorphism (\ref{mdualisoev}) is understood. An important fact is that this 1-PS is convergent \cite[Proposition 2.8]{HK1}, and we call the limit
\begin{equation*}
		\hat{h} :=   \lim_{t \to + \infty} e^{- w t} h_{\sigma_t}  e^{- w t} \in  \Gamma_{C^{\infty}_{X^{reg}}} ( \mathcal{E}^{\vee} \otimes  \overline{\mathcal{E}^{\vee}})
	\end{equation*}
the \textbf{renormalised Quot-scheme limit} of $h_{\sigma_t}$ \cite[Definition 2.9]{HK1}, which is positive definite over $X^{reg}$ \cite[Lemma 2.11]{HK1}.

The renormalised Quot-scheme limit of $h_{\sigma_t}$ can be regarded as a differential-geometric analogue of the Quot-scheme limit in algebraic geometry, as explained in \cite[Section 2]{HK1}. The choice of $\{ w_{\alpha} \}_{\alpha =\hat{1}}^{\hat{\nu}}$ in (\ref{defsubswgt}) precisely corresponds to the weights of $\zeta$ on the components of $\bigoplus_{i=1}^{\nu} \mathcal{E}_{-w_i}$ whose rank is nontrivial. From now on in the main body of the text, we shall consistently use the subscript $\alpha$ to denote this particular subset $\{ w_{\alpha} \}_{\alpha =\hat{1}}^{\hat{\nu}}$.

\begin{rem} \label{remdefwalpha}
	The precise meaning of $\{ w_{\alpha} \}_{\alpha =\hat{1}}^{\hat{\nu}} \subset \{ w_i \}_{i=1}^{\nu}$ is as follows: the subscript $\alpha$ runs over a subset $\{ \hat{1} , \dots , \hat{\nu} \}$  of $\{ 1 , \dots , \nu \}$, with the ordering given by $\hat{1} < \hat{2} < \dots < \hat{\nu}$. It turns out that $\hat{1} = 1$ (see \cite[Remark 2.3]{HK1}). The reader is referred to \cite[Section 2]{HK1} for more details.
\end{rem}

\subsection{The non-Archimedean Donaldson functional}

We now recall the non-Archimedean Donaldson functional from \cite{HK1}, whose definition involves the filtration (\ref{satfiltevs}) of $\mathcal{E}$ by saturated subsheaves.

We choose $j(\zeta , k) \in \mathbb{N}$ to be the minimum integer so that 
\begin{equation} \label{defsmjzk}
	j(\zeta , k) w_i \in \mathbb{Z}
\end{equation}
for all $i=1 , \dots, \nu$. Writing $\bar{w}_i:= j(\zeta , k) w_i$, we may replace the filtration (\ref{satfiltevs}) by
\begin{equation} \label{itsatfiltevs}
	0 \neq \mathcal{E}'_{\le - \bar{w}_{1}} \subset \cdots \subset \mathcal{E}'_{\le - \bar{w}_{\nu}} = \mathcal{E}
\end{equation}
which is graded by integers. With this understood, the following was defined in \cite[Definition 4.3]{HK1}.

\begin{definition}
	\textbf{The non-Archimedean Donaldson functional} $\mathcal{M}^{\mathrm{NA}} (\zeta , k)$ is a rational number defined as
	\begin{equation*}
		\mathcal{M}^{\mathrm{NA}} (\zeta , k) := \frac{2}{j(\zeta , k)} \sum_{q \in \mathbb{Z}} \mathrm{rk} (\mathcal{E}'_{\le q}) \left(  \mu(\mathcal{E}) - \mu (\mathcal{E}'_{\le q}) \right).
	\end{equation*}
\end{definition}

An important point is that the positivity of $\mathcal{M}^{\mathrm{NA}} (\zeta , k)$ is equivalent to the slope stability of $\mathcal{E}$, as stated in the following (see also \cite[Section 5]{HK1}).

\begin{prop} \emph{(\cite[Proposition 6.2]{HK1})} \label{prop62hk1}
	The non-Archimedean Donaldson functional $\mathcal{M}^{\mathrm{NA}} (\zeta , k)$ is positive (resp.~nonnegative) for all $\zeta \in \mathfrak{sl} (H^0 (\mathcal{E}(k))^{\vee})$ and $k \ge \mathrm{reg}(\mathcal{E})$ whose associated filtration (\ref{satfiltevs}) is nontrivial, if and only if $\mathcal{E}$ is slope stable (resp.~semistable).
\end{prop}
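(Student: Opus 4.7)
The plan is to reduce $\mathcal{M}^{\mathrm{NA}}(\zeta, k)$ to an explicit finite sum in which the filtration (\ref{satfiltevs}) appears transparently, and then to establish the two implications separately. Since $\mathcal{E}'_{\le q}$ is a step function of $q \in \mathbb{Z}$---equal to $0$ for $q < -\bar{w}_1$, equal to $\mathcal{E}$ for $q \geq -\bar{w}_\nu$, and jumping only at $q = -\bar{w}_i$---the sum in the definition of $\mathcal{M}^{\mathrm{NA}}(\zeta,k)$ collapses. On each interval $-\bar{w}_i \leq q < -\bar{w}_{i+1}$ exactly $\bar{w}_i - \bar{w}_{i+1}$ integers $q$ contribute identically, while the $i = \nu$ block vanishes because $\mu(\mathcal{E}) - \mu(\mathcal{E}) = 0$ and the $q < -\bar{w}_1$ block vanishes because $\mathrm{rk}(\mathcal{E}'_{\le q}) = 0$. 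Dividing by $j(\zeta,k)$ I would thus obtain
\begin{equation*}
\mathcal{M}^{\mathrm{NA}}(\zeta, k) = 2 \sum_{i=1}^{\nu-1}(w_i - w_{i+1})\, \mathrm{rk}(\mathcal{E}'_{\le -\bar{w}_i})\bigl(\mu(\mathcal{E}) - \mu(\mathcal{E}'_{\le -\bar{w}_i})\bigr),
\end{equation*}
where each coefficient $w_i - w_{i+1}$ is strictly positive by (\ref{ordlambda}) and each rank is positive since the filtration starts with a nonzero term.

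The ``if'' direction is then immediate. Each $\mathcal{E}'_{\le -\bar{w}_i}$ with $1 \leq i \leq \nu-1$ is a proper saturated subsheaf of $\mathcal{E}$ of positive rank strictly smaller than $\mathrm{rk}(\mathcal{E})$, so slope stability forces $\mu(\mathcal{E}) - \mu(\mathcal{E}'_{\le -\bar{w}_i}) > 0$. Nontriviality of the filtration means $\nu \geq 2$, so the displayed sum contains at least one strictly positive summand and the rest are nonnegative, whence $\mathcal{M}^{\mathrm{NA}}(\zeta, k) > 0$. The semistable case is identical with $\geq$ in place of $>$.

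For the ``only if'' direction I would argue by contrapositive. If $\mathcal{E}$ is not slope stable, there is a coherent subsheaf $\mathcal{F} \subsetneq \mathcal{E}$ with $0 < \mathrm{rk}(\mathcal{F}) < \mathrm{rk}(\mathcal{E})$ and $\mu(\mathcal{F}) \geq \mu(\mathcal{E})$; since passing to the saturation can only increase the slope without altering the rank, I may assume $\mathcal{F}$ saturated. Choosing $k \geq \max\{\mathrm{reg}(\mathcal{E}), \mathrm{reg}(\mathcal{F})\}$, the inclusion $V_1 := H^0(\mathcal{F}(k)) \hookrightarrow H^0(\mathcal{E}(k))$ realises $\mathcal{F}$ as the image of $V_1 \otimes \mathcal{O}_X(-k) \to \mathcal{E}$. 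Fixing a complement $V_2$ and defining $\zeta \in \mathfrak{sl}(H^0(\mathcal{E}(k))^{\vee})$ to act with eigenvalues $w_1 = \dim V_2$ on $V_1^{\vee}$ and $w_2 = -\dim V_1$ on $V_2^{\vee}$ (so that $\mathrm{tr}\,\zeta = 0$ and $w_1 > w_2$), the two-step filtration (\ref{satfiltevs}) has first step $\mathcal{E}'_{\le -\bar{w}_1} = \mathcal{F}$. The collapsed formula then reads
\begin{equation*}
\mathcal{M}^{\mathrm{NA}}(\zeta, k) = 2(w_1 - w_2)\, \mathrm{rk}(\mathcal{F})\bigl(\mu(\mathcal{E}) - \mu(\mathcal{F})\bigr) \leq 0,
\end{equation*}
contradicting the hypothesised positivity. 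The semistable case is analogous, using the strict inequality $\mu(\mathcal{F}) > \mu(\mathcal{E})$ to produce a strictly negative value of $\mathcal{M}^{\mathrm{NA}}$.

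The main obstacle, in my view, lies in making the final construction airtight: one must verify that the saturated filtration (\ref{satfiltevs}) produced by the test vector $\zeta$ has first step exactly equal to $\mathcal{F}$, not merely a subsheaf with the same saturation. This hinges on tracing the Quot-scheme construction of Section \ref{revqslimfs}: the regularity hypothesis $k \geq \mathrm{reg}(\mathcal{F})$ must force $V_1 \otimes \mathcal{O}_X(-k) \to \mathcal{E}$ to surject onto $\mathcal{F}$ (and not merely onto a proper subsheaf), and the saturation procedure passing from (\ref{filtevs}) to (\ref{satfiltevs}) must act trivially on $\mathcal{F}$ since it is already saturated. Matching the sign conventions of the $\zeta$-action on $H^0(\mathcal{E}(k))$ versus its dual, as alluded to in Section \ref{revqslimfs}, is the secondary technicality to be checked carefully.
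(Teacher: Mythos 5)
This proposition is not proved in the present paper: it is quoted from the prequel as \cite[Proposition 6.2]{HK1}, so there is no in-text proof to compare against. Your argument is nevertheless the natural one, and your collapsed expression for $\mathcal{M}^{\mathrm{NA}}(\zeta,k)$ agrees exactly with the manipulation the paper itself performs in the proof of Proposition \ref{uniflemmna}, where the sum over $q\in\mathbb{Z}$ is reduced to the range $-\bar{w}_{\hat{1}}\le q\le -\bar{w}_{\hat{\nu}}-1$. Your treatment of the converse direction is also sound, and you correctly isolate the one genuine technical point: that global generation of $\mathcal{F}(k)$ for $k\ge\mathrm{reg}(\mathcal{F})$ forces the first step of (\ref{satfiltevs}) to be exactly the saturated sheaf $\mathcal{F}$.

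One step in the forward direction needs repair. It is not true that every $\mathcal{E}'_{\le-\bar{w}_i}$ with $1\le i\le\nu-1$ is a proper subsheaf of rank strictly less than $\mathrm{rk}(\mathcal{E})$: as recorded in the proof of Proposition \ref{uniflemmna} (quoting \cite[Lemma 2.5]{HK1}), one has $\mathrm{rk}(\mathcal{E}'_{\le-w_i})=r$ for all $i\ge\hat{\nu}$, and since these sheaves are saturated they are then equal to $\mathcal{E}$, so the corresponding summands vanish rather than being strictly positive. Relatedly, ``nontrivial filtration'' is not the condition $\nu\ge2$; by Remark \ref{remfilttriv} and the remark following Definition \ref{defjna} it is equivalent to $J^{\mathrm{NA}}(\zeta,k)>0$, i.e.\ $\hat{\nu}\ge2$, which is precisely what guarantees at least one index $i$ with $0<\mathrm{rk}(\mathcal{E}'_{\le-\bar{w}_i})<r$ and hence, under stability, at least one strictly positive summand. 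If $\hat{\nu}=1$ one can have $\nu\ge2$ with $\mathcal{E}'_{\le-\bar{w}_1}=\mathcal{E}$ and $\mathcal{M}^{\mathrm{NA}}(\zeta,k)=0$, so your stated justification would fail in that case; with the corrected notion of nontriviality the argument goes through.
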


\begin{rem} \label{remfilttriv}
	In the above, we decreed that a filtration is trivial if it is $0 \subsetneq \mathcal{E}$. Later, we shall define a quantity which detects such triviality (Definition \ref{defjna}).
\end{rem}

We now recall and state the main results of \cite{HK1} as follows.

\begin{thm} \emph{(\cite[Theorem 1]{HK1})} \label{thmlnsdf}
There exists a constant $c_k >0$ that depends only on $h_{\mathrm{ref}}$ and $k \in \mathbb{N}$ such that
\begin{equation*}
		\mathcal{M}^{Don}(h_{\sigma_t} ,  h_{\mathrm{ref}}) \ge  \mathcal{M}^{\mathrm{NA}} (\zeta , k) t - c_{k}
\end{equation*}
holds for all $t \ge 0$ and all $\zeta \in \mathfrak{sl}(H^0(\mathcal{E}(k))^{\vee})$.
\end{thm}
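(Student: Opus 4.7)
The plan is to analyze $\mathcal{M}^{Don}(h_{\sigma_t},h_{\mathrm{ref}})$ asymptotically along the Bergman 1-PS, combining the first-variation formula of Lemma \ref{derivmdon} with the renormalised Quot-scheme limit machinery reviewed in Section \ref{revqslimfs}. By the cocycle property (\ref{cocyclemdon}), $\mathcal{M}^{Don}(h_{\sigma_0},h_{\mathrm{ref}})$ is a finite quantity depending only on $h_{\mathrm{ref}}$ and $k$ which can be absorbed into $c_k$, so I would reduce to bounding $\int_0^t\tfrac{d}{ds}\mathcal{M}^{Don}(h_{\sigma_s},h_{\sigma_0})\,ds$ from below, using the normalisation $\|\zeta\|_{op}\le 1$ from \cite[Remark 2.1]{HK1} to secure uniformity in $\zeta$.

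The key computation takes place on $X^{reg}$, where the $C^\infty$-splitting (\ref{mdualisoev}) allows one to write $h_{\sigma_s}=e^{ws}\hat h_{\sigma_s}e^{ws}$ with $w=\mathrm{diag}(w_\alpha)$ commuting blockwise with $\hat h_{\sigma_s}$. This gives
$$h_{\sigma_s}^{-1}\partial_s h_{\sigma_s}=2w+\hat h_{\sigma_s}^{-1}\partial_s\hat h_{\sigma_s},$$
and since $\hat h_{\sigma_s}\to\hat h$ in $C^\infty_{\mathrm{loc}}$ on $X^{reg}$ by \cite[Proposition 2.8]{HK1}, the second summand together with the off-diagonal curvature blocks (which decay exponentially in $s$) remains bounded as $s\to\infty$. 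The linear-in-$s$ contribution to $\tfrac{d}{ds}\mathcal{M}^{Don}$ therefore comes from $2w$ paired with the block-diagonal limiting curvatures $F_{\hat h^{(\alpha)}}$ on each $E_{-w_\alpha}$.

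Applying Chern--Weil to the saturated filtration (\ref{satfiltevs}), so that $\int_{X^{reg}}\tr\Lambda_\omega F_{\hat h^{(\alpha)}}\,\omega^n/n!=\deg(E_{-w_\alpha})$ as an honest algebro-geometric degree (here the saturation property is essential), yields
$$\lim_{s\to\infty}\frac{d}{ds}\mathcal{M}^{Don}(h_{\sigma_s},h_{\sigma_0})=2\sum_{\alpha}w_\alpha\bigl(\deg(E_{-w_\alpha})-r_\alpha\mu(\mathcal{E})\bigr),$$
where $r_\alpha=\mathrm{rk}(E_{-w_\alpha})$; here the $\mu(\mathcal{E})/\Vol_L$-term in $\mathcal{M}^{Don}$ contracts with $\int\omega^n/n!=\Vol_L$. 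An Abel summation by parts in the weights $w_\alpha$ against the ranks $\mathrm{rk}(\mathcal{E}'_{\le-w_i})$, exploiting $\tr(\zeta)=0$ so the $\mu(\mathcal{E})$-contributions telescope, rewrites this sum as $\mathcal{M}^{\mathrm{NA}}(\zeta,k)$ after clearing the integrality denominator $j(\zeta,k)$.

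Integrating in $s$ produces $\mathcal{M}^{Don}(h_{\sigma_t},h_{\sigma_0})=\mathcal{M}^{\mathrm{NA}}(\zeta,k)\,t+R(t,\zeta)$, and the main obstacle is bounding $R(t,\zeta)\ge -c_k$ uniformly in both $t\ge 0$ and in $\zeta$ with $\|\zeta\|_{op}\le1$. The bounded pieces identified above must be controlled pointwise via the $C^\infty_{\mathrm{loc}}$-convergence of $\hat h_{\sigma_s}$ on $X^{reg}$, with constants independent of $\zeta$; more delicately, the singular locus $X\setminus X^{reg}$ must be handled, where $\hat h_{\sigma_s}$ may degenerate. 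The Chern--Weil identification of $\deg(E_{-w_\alpha})$ survives there because the non-locally-free locus of the saturations has complex codimension $\ge 2$ and the relevant integrands extend integrably across the boundary. Securing this degeneration analysis while maintaining uniformity over the compact set $\{\zeta:\|\zeta\|_{op}\le1\}$ constitutes the technical heart of the proof.
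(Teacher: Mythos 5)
This paper does not actually prove Theorem \ref{thmlnsdf}: it is imported verbatim from the prequel \cite{HK1}, and the only ``proof'' here is the citation. That said, the machinery reviewed in Section \ref{scrmrhk1} (the renormalised Quot-scheme limit $\hat h_{\sigma_t}=e^{-wt}h_{\sigma_t}e^{-wt}$, the saturated filtration (\ref{satfiltevs}), and the definition of $\mathcal{M}^{\mathrm{NA}}$ via Abel summation over the weights) is exactly the machinery \cite{HK1} builds for this purpose, and your outline deploys it in the intended way: splitting off the linear-in-$s$ term $2w$, identifying its pairing with the limiting curvature via Chern--Weil on the saturations, and summing by parts (using $\tr\zeta=0$) to recover $\mathcal{M}^{\mathrm{NA}}(\zeta,k)$. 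So the strategy is the right one.

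However, as a proof the proposal has genuine gaps, and you have in effect named the main one yourself without closing it. First, the uniformity of the remainder $R(t,\zeta)$ over all $\zeta$ with $\Vert\zeta\Vert_{op}\le 1$ cannot be obtained by a naive compactness argument: the eigenspace decomposition, hence $X^{reg}$, the filtration, the limit $\hat h$, and the rate of convergence of $\hat h_{\sigma_s}$ all jump discontinuously as $\zeta$ varies, so ``constants independent of $\zeta$'' must come from an explicit estimate (in \cite{HK1} this is the elementary but careful analysis of the matrices $Q(k)$; compare Remark \ref{remlbrqlimfs} and the role of \cite[Remark 3.10]{HK1}). Second, $C^{\infty}_{\mathrm{loc}}$-convergence of $\hat h_{\sigma_s}$ on $X^{reg}$ does not by itself bound $\int_X$ of the error terms, since the integrand may blow up as one approaches $X\setminus X^{reg}$; asserting that the integrands ``extend integrably across the boundary'' is precisely the point that needs proof, and for a lower bound the efficient route is to exploit positivity (dropping nonnegative contributions near the degeneracy locus) rather than to establish the exact asymptotic $\mathcal{M}^{Don}=\mathcal{M}^{\mathrm{NA}}t+O(1)$, which is the harder \cite[Theorem 2]{HK1} and strictly more than Theorem \ref{thmlnsdf} requires. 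Third, a smaller but real issue: the identity $h_{\sigma_s}^{-1}\partial_s h_{\sigma_s}=2w+\hat h_{\sigma_s}^{-1}\partial_s\hat h_{\sigma_s}$ presumes that $w$ commutes with $\hat h_{\sigma_s}$, which is not automatic since $\hat h_{\sigma_s}$ need not be block-diagonal for the splitting (\ref{mdualisoev}); one only gets $h_{\sigma_s}^{-1}\partial_s h_{\sigma_s}=w+e^{-ws}\hat h_{\sigma_s}^{-1}w\hat h_{\sigma_s}e^{ws}+e^{-ws}\hat h_{\sigma_s}^{-1}(\partial_s\hat h_{\sigma_s})e^{ws}$, and the off-diagonal contributions must be estimated, not assumed away. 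You should also check the overall sign in the Abel summation against the convention that $\zeta$ acts on $V_{-w_i,k}$ by $T^{-w_i}$, as the claimed limit slope $2\sum_{\alpha}w_{\alpha}(\deg(E_{-w_{\alpha}})-r_{\alpha}\mu(\mathcal{E}))$ comes out with the opposite sign to $\mathcal{M}^{\mathrm{NA}}(\zeta,k)$ under the most direct reading of the definitions.
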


\begin{rem}
While the above theorem is all we need in this paper, we can further show (cf.~\cite[Theorem 2]{HK1}) that
\begin{equation*}
\mathcal{M}^{Don} (h_{\sigma_t}, h_{\mathrm{ref}}) = \mathcal{M}^{\mathrm{NA}} (\zeta , k) t + O(1),
\end{equation*}
where $O(1)$ stands for the term that remains bounded as $t \to + \infty$. In particular, we have (cf.~\cite[Corollary 6.3]{HK1})
\begin{equation*}
\lim_{t \to + \infty} \frac{\mathcal{M}^{Don} (h_{\sigma_t}, h_{\mathrm{ref}})}{t} = \mathcal{M}^{\mathrm{NA}} (\zeta , k),
\end{equation*}
which shows that $\mathcal{M}^{\mathrm{NA}} (\zeta , k)$ is the term that controls the asymptotic behaviour of $\mathcal{M}^{Don}(h_{\sigma_t}, h_{\mathrm{ref}})$.

It may also be worth noting that the analysis used to prove the above results are elementary, cf.~\cite[Section 3]{HK1}.
\end{rem}

\section{Slope stability as uniform stability} \label{ssstaust}

We prove that slope stability can be interpreted as a ``uniform'' stability condition, in terms of \cite{BHJ1,Dertwisted}. The following is the key definition that will be important later.

\begin{definition} \label{defjna}
Suppose that $\zeta \in \mathfrak{sl} (H^0 (\mathcal{E}(k))^{\vee})$ has eigenvalues $w_1 , \dots , w_{\nu} \in \mathbb{Q}$, and let $j(\zeta , k)$ be as defined by (\ref{defsmjzk}). Writing $\bar{w}_{\alpha} :=  j(\zeta , k) w_{\alpha} \in \mathbb{Z}$ for $\alpha = \hat{1} , \dots , \hat{\nu}$, we define
\begin{equation*}
		J^{\mathrm{NA}} (\zeta , k ) := \max_{\hat{1} \le \alpha , \beta \le \hat{\nu}} \frac{|\bar{w}_{\alpha} - \bar{w}_{\beta} |}{j(\zeta , k)} = \max_{\hat{1} \le \alpha , \beta \le \hat{\nu}} |w_{\alpha} - w_{\beta}| \ge 0.
\end{equation*}
\end{definition}

\begin{rem}
	The notation $J^{\mathrm{NA}}$ is chosen simply because the role it plays is analogous to the non-Archimedean $J$-functional \cite{BHJ1} or the minimum norm \cite{Dertwisted} for the case of varieties. In particular, the proof of \cite[Lemma 2.5]{HK1} shows that the filtration (\ref{satfiltevs}) defined by $\zeta$ is nontrivial (in the sense of Proposition \ref{prop62hk1} and Remark \ref{remfilttriv}) if and only if $J^{\mathrm{NA}} (\zeta , k ) > 0$.
	
	Note that we do \textit{not} define a functional $J$ that has $J^{\mathrm{NA}}$ as its slope at infinity (which would be more natural, following \cite{BHJ1,BHJ2}). On the other hand, it is worth pointing out that $J^{\mathrm{NA}} (\zeta , k )$ is defined in terms of purely algebro-geometric data, as the maximum difference of the weights on the non-torsion components of the Quot-scheme limit, which a priori has nothing to do with hermitian metrics.
\end{rem}

\begin{rem} \label{jnascalinv}
	Note that while $J^{\mathrm{NA}} (\zeta , k )$ is defined only for $\zeta \in \mathfrak{sl} (H^0 (\mathcal{E}(k))^{\vee})$, it can be naturally extended to $\mathfrak{gl} (H^0 (\mathcal{E}(k))^{\vee})$ since it is invariant under the constant rescaling $\zeta \mapsto \zeta + c \mathrm{Id}$.
\end{rem}

If $\mathcal{E}$ is slope stable, we show that in fact there is a strict lower bound for $\mathcal{M}^{\mathrm{NA}}$ in terms of $J^{\mathrm{NA}} (\zeta , k )$. We start with the following observation.

\begin{lem}\label{uniflem}
	Suppose that $\mathcal{E}$ is slope stable. Then there exists a constant $c_{\mathcal{E}} >0$ such that for any coherent subsheaf $\mathcal{F} \subset \mathcal{E}$ with $0<\mathrm{rk}(\mathcal{F}) < \mathrm{rk}(\mathcal{E})$ we have 
		\begin{equation*}
		\frac{\mathrm{deg}(\mathcal{E})}{\mathrm{rk}(\mathcal{E})} - \frac{\mathrm{deg}(\mathcal{F})}{\mathrm{rk}(\mathcal{F})} \ge c_{\mathcal{E}} >0.
	\end{equation*}
\end{lem}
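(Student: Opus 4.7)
The plan is to reduce the claim to the observation that $\mu(\mathcal{E}) - \mu(\mathcal{F})$ takes only rational values with bounded denominator, so that strict positivity (which is guaranteed by slope stability) forces a uniform lower bound.

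First I would rewrite the slope difference over a common denominator:
\begin{equation*}
	\mu(\mathcal{E}) - \mu(\mathcal{F}) = \frac{\deg(\mathcal{E})\,\mathrm{rk}(\mathcal{F}) - \deg(\mathcal{F})\,\mathrm{rk}(\mathcal{E})}{\mathrm{rk}(\mathcal{E})\,\mathrm{rk}(\mathcal{F})}.
\end{equation*}
By Definition \ref{defrkdeg}, both $\deg$ and $\mathrm{rk}$ take integer values on $X$ (smooth), so the numerator is an integer. Slope stability of $\mathcal{E}$ gives that this integer is strictly positive for every coherent subsheaf $\mathcal{F}$ with $0 < \mathrm{rk}(\mathcal{F}) < \mathrm{rk}(\mathcal{E})$, and hence it is at least $1$.

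Next I would bound the denominator from above: since $1 \le \mathrm{rk}(\mathcal{F}) \le \mathrm{rk}(\mathcal{E}) - 1$, we have $\mathrm{rk}(\mathcal{E})\,\mathrm{rk}(\mathcal{F}) \le \mathrm{rk}(\mathcal{E})(\mathrm{rk}(\mathcal{E}) - 1)$. Combining the two inequalities yields
\begin{equation*}
	\mu(\mathcal{E}) - \mu(\mathcal{F}) \ge \frac{1}{\mathrm{rk}(\mathcal{E})(\mathrm{rk}(\mathcal{E}) - 1)} =: c_{\mathcal{E}} > 0,
\end{equation*}
which is the uniform lower bound claimed in the lemma (note that $c_{\mathcal{E}}$ depends only on $\mathrm{rk}(\mathcal{E})$, independently of the particular $\mathcal{F}$).

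There is essentially no obstacle: the entire argument rests on the integrality of degrees and ranks and on the crude bound on the rank of a proper subsheaf. The only point that warrants a brief comment is the integrality of $\deg(\mathcal{F})$ and $\mathrm{rk}(\mathcal{F})$ for arbitrary coherent $\mathcal{F} \subset \mathcal{E}$, which the paper has already addressed in the remark following Definition \ref{defrkdeg} by invoking smoothness of $X$; so I would simply cite that. The resulting constant $c_{\mathcal{E}}$ may be far from optimal, but that is irrelevant for the use of this lemma in Proposition \ref{uniflemmna}.
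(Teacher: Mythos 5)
Your proof is correct and is essentially the paper's own argument, which simply notes that the claim "follows from the fact that the degree and the rank are both integers (since $X$ is smooth), with $0<\mathrm{rk}(\mathcal{F}) < \mathrm{rk}(\mathcal{E})$." You merely spell out the details, producing the explicit constant $c_{\mathcal{E}} = 1/\big(\mathrm{rk}(\mathcal{E})(\mathrm{rk}(\mathcal{E})-1)\big)$, which is a useful clarification but not a different route.
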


\begin{proof}
	This simply follows from the fact that the degree and the rank are both integers (since $X$ is smooth), with $0<\mathrm{rk}(\mathcal{F}) < \mathrm{rk}(\mathcal{E})$.
\end{proof}

This implies the following lower bound for $\mathcal{M}^{\mathrm{NA}}$ which is crucially important in our proof of the Donaldson--Uhlenbeck--Yau theorem.

\begin{prop}\label{uniflemmna}
	Suppose that $\mathcal{E}$ is slope stable. Then we have 
	\begin{equation}
		\mathcal{M}^{\mathrm{NA}} (\zeta , k) \ge  2c_{\mathcal{E}}  \cdot J^{\mathrm{NA}} (\zeta , k ) , \label{unifineq}
	\end{equation}
	with $c_{\mathcal{E}}>0$ as in Lemma \ref{uniflem}.
\end{prop}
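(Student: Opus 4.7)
If the filtration (\ref{itsatfiltevs}) is trivial, i.e.\ $\hat{\nu} = \hat{1}$, then $J^{\mathrm{NA}}(\zeta, k) = 0$ and the inequality reduces to $\mathcal{M}^{\mathrm{NA}}(\zeta, k) \ge 0$, which holds by Proposition \ref{prop62hk1} since $\mathcal{E}$ is slope stable. So I may assume $\hat{\nu} > \hat{1}$. The plan is then to unwind the sum defining $\mathcal{M}^{\mathrm{NA}}(\zeta, k)$ in terms of the steps of the filtration, apply Lemma \ref{uniflem} termwise, and finish by telescoping.

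First I would rewrite the sum $\sum_{q \in \mathbb{Z}} \mathrm{rk}(\mathcal{E}'_{\le q})(\mu(\mathcal{E}) - \mu(\mathcal{E}'_{\le q}))$ as a sum over the distinct sheaves in the filtration. The assignment $q \mapsto \mathcal{E}'_{\le q}$ is a step function, constant on each integer interval $[-\bar{w}_i, -\bar{w}_{i+1}) \cap \mathbb{Z}$ of cardinality $\bar{w}_i - \bar{w}_{i+1}$, equal to $0$ for $q < -\bar{w}_1$, and equal to $\mathcal{E}$ for $q \ge -\bar{w}_\nu$. Terms outside $[-\bar{w}_1, -\bar{w}_\nu)$ contribute zero (either $\mathrm{rk} = 0$ or $\mu(\mathcal{E}) - \mu(\mathcal{E}'_{\le q}) = 0$), and any index $i$ for which the rank does not strictly increase merely repeats a sheaf already present at a neighbouring jump. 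Collecting terms and enumerating $\{\hat{1}, \ldots, \hat{\nu}\}$ as $\alpha_1 < \cdots < \alpha_m$ (with $\alpha_1 = \hat{1}$ and $\alpha_m = \hat{\nu}$) would yield
\begin{equation*}
\mathcal{M}^{\mathrm{NA}}(\zeta, k) = \frac{2}{j(\zeta, k)} \sum_{j=1}^{m-1} (\bar{w}_{\alpha_j} - \bar{w}_{\alpha_{j+1}}) \, \mathrm{rk}(\mathcal{E}'_{\le -\bar{w}_{\alpha_j}}) \bigl( \mu(\mathcal{E}) - \mu(\mathcal{E}'_{\le -\bar{w}_{\alpha_j}}) \bigr).
\end{equation*}

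Now each $\mathcal{E}'_{\le -\bar{w}_{\alpha_j}}$ for $1 \le j \le m-1$ is a saturated subsheaf of $\mathcal{E}$ with $0 < \mathrm{rk}(\mathcal{E}'_{\le -\bar{w}_{\alpha_j}}) < \mathrm{rk}(\mathcal{E})$, so Lemma \ref{uniflem} gives $\mu(\mathcal{E}) - \mu(\mathcal{E}'_{\le -\bar{w}_{\alpha_j}}) \ge c_{\mathcal{E}}$. Combining this with the crude bound $\mathrm{rk}(\mathcal{E}'_{\le -\bar{w}_{\alpha_j}}) \ge 1$ together with the telescope $\sum_{j=1}^{m-1}(\bar{w}_{\alpha_j} - \bar{w}_{\alpha_{j+1}}) = \bar{w}_{\alpha_1} - \bar{w}_{\alpha_m} = \bar{w}_{\hat{1}} - \bar{w}_{\hat{\nu}}$ would give
\begin{equation*}
\mathcal{M}^{\mathrm{NA}}(\zeta, k) \ge \frac{2 c_{\mathcal{E}}}{j(\zeta, k)}(\bar{w}_{\hat{1}} - \bar{w}_{\hat{\nu}}) = 2 c_{\mathcal{E}} J^{\mathrm{NA}}(\zeta, k),
\end{equation*}
which is exactly (\ref{unifineq}).

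The only slightly delicate part is the combinatorial bookkeeping that rewrites the discrete sum over all $q \in \mathbb{Z}$ as a telescoping sum indexed by the rank-jumping subset $\{\hat{1}, \ldots, \hat{\nu}\}$; once that reformulation is in hand, the inequality follows by a straightforward two-step estimate (Lemma \ref{uniflem} plus $\mathrm{rk} \ge 1$) and a telescoping identity.
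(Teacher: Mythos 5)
Your proof is correct and follows essentially the same route as the paper: restrict the sum over $q\in\mathbb{Z}$ to the range where $0<\mathrm{rk}(\mathcal{E}'_{\le q})<r$, bound each term below by $c_{\mathcal{E}}$ via Lemma \ref{uniflem} and $\mathrm{rk}\ge 1$, and count the number of contributing terms as $\bar{w}_{\hat{1}}-\bar{w}_{\hat{\nu}}$. The only cosmetic difference is that you regroup the $q$'s into intervals indexed by the rank-jumping set and telescope, whereas the paper sums directly over $q=-\bar{w}_{\hat{1}},\dots,-\bar{w}_{\hat{\nu}}-1$; these are the same count.
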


\begin{proof}
	Recalling the definition of $\{ w_{\alpha} \}_{\alpha = \hat{1}}^{\hat{\nu}}$ as given in (\ref{defsubswgt}), \cite[Lemma 2.5]{HK1} implies that $0< \mathrm{rk} (\mathcal{E}'_{ -w_i}) \le r$ if and only if $i \in \{ \hat{1} , \dots , \hat{\nu}\}$, and $\mathrm{rk} (\mathcal{E}'_{\le -w_i}) = r$ if and only if $i \ge \hat{\nu}$. In particular, $\mathrm{rk} (\mathcal{E}'_{\le q}) = 0$ if $q < - \bar{w}_{\hat{1}} $ and $\mu (\mathcal{E}'_{\le q}) = \mu (\mathcal{E}) $ if $- \bar{w}_{\hat{\nu}} \le q $ (by noting that $\mathcal{E}'_{\le q}$ is saturated in $\mathcal{E}$). We thus get
\begin{equation*}
 \sum_{q \in \mathbb{Z}} \mathrm{rk} (\mathcal{E}_{\le q}) \left( \mu(\mathcal{E}) - \mu (\mathcal{E}'_{\le q}) \right) = \sum_{q = - \bar{w}_{\hat{1}}}^{- \bar{w}_{\hat{\nu} }-1} \mathrm{rk} (\mathcal{E}'_{\le  q}) \left(  \mu(\mathcal{E}) - \mu (\mathcal{E}'_{\le q}) \right) .
\end{equation*}

Thus, combined with Lemma \ref{uniflem}, we get
\begin{align*}
	\mathcal{M}^{\mathrm{NA}} (\zeta , k) &= \frac{2}{j (\zeta , k)}\sum_{q = - \bar{w}_{\hat{1}}}^{- \bar{w}_{\hat{\nu} }-1} \mathrm{rk} (\mathcal{E}'_{\le  q}) \left(  \mu(\mathcal{E}) - \mu (\mathcal{E}'_{\le q}) \right)\\
	&= \frac{2}{j (\zeta , k)}\sum_{q = - \bar{w}_{\hat{1}}}^{- \bar{w}_{\hat{\nu} }-1} \mathrm{rk} (\mathcal{E}'_{\le q}) \left( \frac{\mathrm{deg}(\mathcal{E})}{\mathrm{rk}(\mathcal{E})} - \frac{\mathrm{deg}(\mathcal{E}'_{\le q})}{\mathrm{rk}(\mathcal{E}'_{<q})} \right) \\
	&\ge \frac{2}{j(\zeta , k)} \sum_{q = - \bar{w}_{\hat{1}}}^{- \bar{w}_{\hat{\nu} }-1} \mathrm{rk} (\mathcal{E}'_{\le q}) c_{\mathcal{E}} \\
	&\ge 2 c_{\mathcal{E}} \frac{\bar{w}_{\hat{1}} - \bar{w}_{\hat{\nu}} }{j(\zeta , k)}.
\end{align*}
By recalling the ordering $w_1 = w_{\hat{1}} > \cdots > w_{\hat{\nu}} > \cdots > w_{\nu}$, as in (\ref{ordlambda}) and Remark \ref{remdefwalpha}, we get the result.
\end{proof}
\begin{rem}
Observe that $\mathcal{M}^{\mathrm{NA}} (\zeta , k)$ and $J^{\mathrm{NA}} (\zeta , k )$ are both equal to zero for all $\zeta$ and $k$ if $\mathcal{E}$ is a line bundle, which fundamentally comes from the fact that the stability condition (Definition \ref{defmtstab}) is vacuous for line bundles (i.e.~there exists no subsheaf $\mathcal{F} \subset \mathcal{E}$ with $0 < \mathrm{rk} (\mathcal{F}) < \mathrm{rk} (\mathcal{E})$ if $\mathcal{E}$ is a line bundle). In particular, Proposition \ref{uniflemmna} provides no nontrivial information for line bundles.
\end{rem}

\section{From slope stability to Hermitian--Einstein metrics}\label{stabtocan}

\subsection{Uniform coercivity of the Donaldson functional}
Suppose that we fix a reference hermitian metric $h_{\mathrm{ref}} \in \mathcal{H}_{\infty}$, and pick a sequence $\{ h_k \}_{k \in \mathbb{N}} \subset \mathcal{H}_{\infty}$, $h_k \in \mathcal{H}_k$, such that $h_k \to h_{\mathrm{ref}}$ in the $C^{p}$-norm as $k \to \infty$. In what follows, we take $p$ to be an integer with $p \ge 2$.

Pick $\zeta \in \mathfrak{sl}(H^0(\mathcal{E}(k))^{\vee})$ and define $h_{\sigma_t}$ to be the Bergman 1-PS emanating from $h_k$ generated by $\zeta$. In Theorem \ref{thmlnsdf} (or \cite[Theorem 1]{HK1}), we proved that there exists a constant $c_k >0$ that depends on $k$ such that
\begin{equation*}
	\mathcal{M}^{Don} (h_{\sigma_t},h_{\mathrm{ref}}) \ge \mathcal{M}^{\mathrm{NA}}(\zeta , k) t - c_k
\end{equation*}
uniformly for all $t \ge 0$ and $\zeta \in \mathfrak{sl}(H^0(\mathcal{E}(k))^{\vee})$. It seems natural to conjecture that the above inequality can be strengthened as follows (see also \cite[Remark 3.10]{HK1}).
 
 \begin{hypothesis} \label{conjpczero}
 There exists a constant $c_{\mathrm{ref}} >0$ which depends only on the reference metric $h_{\mathrm{ref}} \in \mathcal{H}_{\infty}$ such that
 \begin{equation} \label{pczero}
	\mathcal{M}^{Don} (h_{\sigma_t},h_{\mathrm{ref}}) \ge  \mathcal{M}^{\mathrm{NA}}(\zeta , k) t - c_{\mathrm{ref}} 
\end{equation}
uniformly for all $t \ge 0$, $\zeta \in \mathfrak{sl}(H^0(\mathcal{E}(k))^{\vee})$, and $k \in \mathbb{N}$.
 \end{hypothesis}

It is tempting to point out an analogy with the case for the K\"ahler--Einstein metrics, in which a similar inequality was achieved by establishing the partial $C^0$-estimate (\cite{DonSun,Szepartial}; see also \cite[Section 6]{BHJ2}). For the vector bundles, a natural statement for the partial $C^0$-estimate may be the following. Let $\{ h_t \}_{t \ge 0}$ be the solution to the Yang--Mills flow starting at $h_0$ and let $\Vert \cdot \Vert$ be an appropriate Sobolev norm. Then the partial $C^0$-estimate would claim that for all $\epsilon >0$ there exists $k = k(\epsilon) \in \mathbb{N}$ such that for each $h_t$ there exists a Fubini--Study metric $h'_t \in \mathcal{H}_k$ at level $k$ such that $\Vert h_t - h'_t \Vert < \epsilon$ for all $t \ge 0$; the crucial part is that $k$ can be chosen uniformly for all $t$. This, together with Theorem \ref{thmlnsdf}, will certainly imply Hypothesis \ref{conjpczero} along the Yang--Mills flow.

\begin{rem}
Note that similar assumptions were made by Paul for the case of constant scalar curvature K\"ahler metrics or K\"ahler--Einstein metrics: see \cite[Conjecture 5.1]{Paul2012cm} and \cite[Corollary 1.6]{Paul13}.	
\end{rem}

Assuming the truth of Hypothesis \ref{conjpczero} gives us the following immediate consequence for a slope stable bundle $\mathcal{E}$.

\begin{prop} \label{pcorrdonna}
	Suppose that Hypothesis \ref{conjpczero} is true and that $\mathcal{E}$ is slope stable. Then, for any rational Bergman 1-PS $\{ h_{\sigma_t} \}_{t \ge 0}$ we have
\begin{equation} \label{corrdonna}
	\mathcal{M}^{Don} (h_{\sigma_t} , h_{\mathrm{ref}}) \ge  2 c_{\mathcal{E}} \cdot J^{\mathrm{NA}} (\zeta , k) t -c_{\mathrm{ref}}.
\end{equation} 
In particular, $\mathcal{M}^{Don}(-, h_{\mathrm{ref}})$ is bounded from below on the space $\mathcal{H}_{\infty}$ of all smooth hermitian metrics. 
\end{prop}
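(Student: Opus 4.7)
The first inequality \eqref{corrdonna} is a direct concatenation of two previously established results: Hypothesis \ref{conjpczero} provides $\mathcal{M}^{Don}(h_{\sigma_t}, h_{\mathrm{ref}}) \ge \mathcal{M}^{\mathrm{NA}}(\zeta,k)\, t - c_{\mathrm{ref}}$ for every rational Bergman 1-PS, while Proposition \ref{uniflemmna} (using the slope stability of $\mathcal{E}$) gives $\mathcal{M}^{\mathrm{NA}}(\zeta,k) \ge 2c_{\mathcal{E}}\, J^{\mathrm{NA}}(\zeta,k) \ge 0$. Multiplying the latter by $t \ge 0$ and chaining with the former yields \eqref{corrdonna}. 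Moreover, since the first term on the right-hand side of \eqref{corrdonna} is then nonnegative, any metric $h$ sitting on some rational Bergman 1-PS (at any $t \ge 0$ and any level $k$) already satisfies $\mathcal{M}^{Don}(h, h_{\mathrm{ref}}) \ge -c_{\mathrm{ref}}$.

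To extend this bound to all of $\mathcal{H}_{\infty}$, the plan is a density-plus-continuity argument. Let $h \in \mathcal{H}_{\infty}$ be arbitrary. By Corollary \ref{corthmbergexp} there exist Fubini--Study metrics $h^{(k)} \in \mathcal{H}_k$ converging to $h$ in the $C^p$-topology for any fixed $p \ge 2$. Each $h^{(k)} = Q(k)^* H_k Q(k)$ is determined by a positive hermitian form $H_k$ on $H^0(\mathcal{E}(k))^{\vee}$, which after rescaling (using the invariance \eqref{mdsclinv}) may be written as $H_k = e^{2\zeta^{(k)}}$ for a traceless hermitian $\zeta^{(k)} \in \mathfrak{sl}(H^0(\mathcal{E}(k))^{\vee})$. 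Since the eigenvalues of a hermitian matrix depend continuously on its entries and $\mathbb{Q}$ is dense in $\mathbb{R}$, I would perturb $\zeta^{(k)}$ to a nearby traceless hermitian $\tilde\zeta^{(k)}$ with rational eigenvalues so that the associated Fubini--Study metric $\tilde h^{(k)} := Q(k)^* e^{2 \tilde\zeta^{(k)}} Q(k)$ satisfies $\Vert h^{(k)} - \tilde h^{(k)} \Vert_{C^p} < 1/k$. Then $\tilde h^{(k)} \to h$ in $C^p$, each $\tilde h^{(k)}$ sits on a rational Bergman 1-PS at $t=1$, and so $\mathcal{M}^{Don}(\tilde h^{(k)}, h_{\mathrm{ref}}) \ge -c_{\mathrm{ref}}$ by the first paragraph.

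The proof is concluded by passing to the limit using $C^2$-continuity of $\mathcal{M}^{Don}(-, h_{\mathrm{ref}})$. This continuity, which is the main technical point to verify beyond the assumption of Hypothesis \ref{conjpczero} itself, follows from the integral definition in Section \ref{defMDon}: using a canonical interpolation such as $h_t = h_{\mathrm{ref}} \exp(t \log(h\, h_{\mathrm{ref}}^{-1}))$, the integrand involves at most two spatial derivatives of the metric through $F_t$, so $C^2$-convergence of the endpoint yields uniform convergence of the integrand over $X \times [0,1]$ and hence convergence of the double integral. Applying this to our sequence $\tilde h^{(k)} \to h$ delivers $\mathcal{M}^{Don}(h, h_{\mathrm{ref}}) \ge -c_{\mathrm{ref}}$, completing the proof.
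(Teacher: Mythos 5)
Your proof is correct and follows essentially the same route as the paper: the inequality is the immediate concatenation of Hypothesis \ref{conjpczero} with Proposition \ref{uniflemmna}, and the lower bound on $\mathcal{H}_{\infty}$ comes from the density of rational-eigenvalue generators in $\mathfrak{sl}(H^0(\mathcal{E}(k))^{\vee})$, scale invariance \eqref{mdsclinv}, and the density of Fubini--Study metrics (Corollary \ref{corthmbergexp}). The paper states this second step in one sentence; you have merely filled in the implicit continuity of $\mathcal{M}^{Don}$ in the $C^2$-topology, which the paper records separately in Lemma \ref{lemhpprop}.
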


\begin{proof}
	The inequality (\ref{corrdonna}) is immediate from Proposition \ref{uniflemmna}. Since $\zeta \in \mathfrak{sl}(H^0(\mathcal{E}(k))^{\vee})$ with rational eigenvalues are dense in $\mathfrak{sl}(H^0(\mathcal{E}(k))^{\vee})$ and $\mathcal{M}^{Don}$ is invariant under scaling (\ref{mdsclinv}), Corollary \ref{corthmbergexp} (afforded by Theorem \ref{thmbergexp}) immediately implies that $\mathcal{M}^{Don}$ is bounded from below on $\mathcal{H}_{\infty}$.
\end{proof}

\subsection{Proof of Theorem \ref{mainthmhk}}

In order to study variational properties of the Donaldson functional, we need to consider a completion of $\mathcal{H}_{\infty}$ by the $C^p$-norm, as defined below.

\begin{definition} \label{Hp-space}
For $p \in \mathbb{N}$, $p \ge 2$, we define $\mathcal{H}_{[p]}$ to be the space of hermitian metrics on $\mathcal{E}$ that is of class $C^p$, with the topology induced from the $C^p$-norm (with respect to a fixed hermitian metric).

We further fix the scaling as follows. Fixing $h_{\mathrm{ref}} \in \mathcal{H}_{\infty}$, we assume in what follows that
	\begin{equation} \label{cvsclngh}
		\inf_{x \in X} \{ \text{least eigenvalue of } h h^{-1}_{\mathrm{ref}} \text{ at } x \} =1
	\end{equation}
	for all $h \in \mathcal{H}_{[p]}$; in particular, no sequence in $\mathcal{H}_{[p]}$ converges to a degenerate hermitian metric.
\end{definition}

Note the obvious inclusions $\mathcal{H}_{[p']} \subset \mathcal{H}_{[p]}$ for $p' \ge p$, and $\mathcal{H}_{\infty}\subset \mathcal{H}_{[p]}$ for all $p \in \mathbb{N}$ (with appropriate scaling as in (\ref{cvsclngh})). Unlike the case of varieties, it turns out that such a classical $C^p$-completion suffices for our purpose; this is perhaps related to the geodesic completeness of $\mathcal{H}_{\infty}$ (or $\mathcal{H}_{[p]}$, as stated in the lemma below).

 We prove some straightforward results concerning $\mathcal{H}_{[p]}$, which can be proved entirely analogously to what was proved for $\mathcal{H}_{\infty}$ in Section \ref{defMDon}.

\begin{lem} \label{lemhpprop}
	Fixing a reference metric $h_{\mathrm{ref}} \in \mathcal{H}_{\infty}$ and writing $\mathcal{M}^{Don}$ for $\mathcal{M}^{Don} (- ,h_{\mathrm{ref}})$, we have the following for any $p\geq 2$:
	\begin{enumerate}
		\item $\mathcal{M}^{Don}$ is well-defined and continuous on $\mathcal{H}_{[p]}$;
		\item $h \in \mathcal{H}_{[p]}$ attains the minimum of $\mathcal{M}^{Don}$ over $\mathcal{H}_{[p]}$ if and only if it satisfies $$\Lambda_\omega F_h = \frac{\mu(\mathcal{E})}{\Vol_L} \Id_{\mathcal{E}},$$ which is well-defined;
		\item the critical point of $\mathcal{M}^{Don}$ on $\mathcal{H}_{[p]}$, if exists, is unique if $\mathcal{E}$ is simple;
		\item $\mathcal{M}^{Don}$ is bounded from below on $\mathcal{H}_{\infty}$ if and only if it is so on $\mathcal{H}_{[p]}$;
		\item any $h \in \mathcal{H}_{[p]}$ can be connected to $h_{\mathrm{ref}}$ by a geodesic.
	\end{enumerate}
\end{lem}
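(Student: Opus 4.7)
My plan is to mirror the well-known $\mathcal{H}_{\infty}$-arguments recalled in Section \ref{defMDon} (and spelled out in the paper's appendix Proposition \ref{lemmdonconv}), using that $\mathcal{M}^{Don}$ and the Hermitian--Einstein equation involve at most two derivatives of $h$, so that $p \ge 2$ makes every computation well-posed. For (1), I will rewrite $\mathcal{M}^{Don}(h,h_{\mathrm{ref}})$ via its path-independent Bott--Chern / secondary-characteristic-class representation (cf.~\cite[Section 6.1]{Kobook}), in which the functional becomes an explicit integral of an expression in $\log\det(hh_{\mathrm{ref}}^{-1})$, $h$, $h_{\mathrm{ref}}$, $F_h$ and $F_{h_{\mathrm{ref}}}$. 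Each ingredient depends continuously on $h$ in the $C^2$-topology, so $\mathcal{M}^{Don}\colon \mathcal{H}_{[p]} \to \mathbb{R}$ is well-defined and continuous. For (5), I will simply invoke the explicit formula $h_s := \exp(s\log(h_1 h_0^{-1}))h_0$ already recalled before Proposition \ref{lemmdonconvsm}: positive-definiteness and $C^p$-regularity of $h_0, h_1$ give the same for $h_s$ (the matrix logarithm and exponential being smooth on the relevant open set of positive endomorphisms), so the geodesic stays in $\mathcal{H}_{[p]}$.

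With (1) and (5) in hand, parts (2) and (3) will follow from geodesic convexity. Differentiating the Bott--Chern formula reproduces Lemma \ref{derivmdon}, so the Euler--Lagrange equation on $\mathcal{H}_{[p]}$ is exactly $\Lambda_\omega F_h = (\mu(\mathcal{E})/\Vol_L)\Id_{\mathcal{E}}$, which makes pointwise sense since $F_h \in C^{p-2}$. The second-variation computation of \cite[Section 6.3]{Kobook} (which is essentially what is carried out in Proposition \ref{lemmdonconv} in the appendix) gives convexity of $\mathcal{M}^{Don}$ along these $C^p$-geodesics, and strict convexity along non-trivial ones when $\mathcal{E}$ is simple (the relevant case by Lemma \ref{stablesimple}). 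Together with (5) this forces any critical point to be a global minimum, proving (2), and under simplicity to be unique up to an overall constant scaling, which the normalisation (\ref{cvsclngh}) fixes, proving (3).

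For (4), the direction ``bounded below on $\mathcal{H}_{[p]}$ implies bounded below on $\mathcal{H}_{\infty}$'' is immediate from the inclusion $\mathcal{H}_{\infty} \subset \mathcal{H}_{[p]}$. For the converse, given $h \in \mathcal{H}_{[p]}$, I will approximate $h$ in the $C^p$-norm by smooth metrics $\{h^{(j)}\} \subset \mathcal{H}_{\infty}$ (by local mollification glued via a partition of unity, rescaled so as to preserve (\ref{cvsclngh})) and then use the continuity of $\mathcal{M}^{Don}$ from (1) to pass the lower bound from $\mathcal{H}_{\infty}$ to $h$.

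The only delicate point will be (1), or more precisely confirming that the second-variation computation underlying the convexity in (2)--(3) survives the passage to $C^p$: one has to check that each integration by parts is valid with $p \ge 2$ regularity, and that the resulting nonnegative pointwise expression (a sum of $L^2$-norms of first derivatives of hermitian endomorphisms) is integrable. Both will follow from $p \ge 2$ combined with the normalisation (\ref{cvsclngh}), which keeps $h$ uniformly bounded away from degeneration; the rest of the lemma is entirely parallel to the $\mathcal{H}_{\infty}$-case treated in Section \ref{defMDon}.
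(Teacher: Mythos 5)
Your proposal is correct and follows essentially the same route as the paper, which proves (1) as a direct consequence of the integral formula for $\mathcal{M}^{Don}$ requiring only two derivatives of $h$, (2) via the first variation as in Lemma \ref{derivmdon}, (3) via the geodesic convexity of Proposition \ref{lemmdonconv}, (4) from continuity together with density of $\mathcal{H}_{\infty}$ in $\mathcal{H}_{[p]}$, and (5) from the explicit geodesic $h_s = \exp(s\log hh_{\mathrm{ref}}^{-1})h_{\mathrm{ref}}$ preserving the normalisation (\ref{cvsclngh}). You merely flesh out the details (Bott--Chern representation for (1), mollification for (4)) that the paper leaves implicit.
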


\begin{proof}
	The first item is straightforward. The second can be proved as in Lemma \ref{derivmdon}. The third follow from Proposition \ref{lemmdonconv} proved in the appendix. The fourth follows from the continuity of $\mathcal{M}^{Don}$. The fifth can be proved by explicitly writing down the geodesic $\{ h_s \}_{0 \le s \le 1}$ as $h_s = \exp( s \log h h^{-1}_{\mathrm{ref}}) h_{\mathrm{ref}}$, where we note that the scaling convention (\ref{cvsclngh}) is preserved for all $0 \le s \le 1$ (recall also the remark after Definition \ref{defgeod}).
\end{proof}

From now,  our aim is to prove that the Donaldson functional achieves the minimum over the space $\mathcal{H}_{[p]}$ if $\mathcal{E}$ is slope stable. Throughout in what follows, we pick and fix some $p \ge 2$ and a reference hermitian metric $h_{\mathrm{ref}} \in \mathcal{H}_{\infty}$ once and for all, and consider $\mathcal{M}^{Don}:=\mathcal{M}^{Don}(-,h_{\mathrm{ref}})$ to be defined over $\mathcal{H}_{[p]}$.

Observe first that $\mathcal{E}$ is simple by Lemma \ref{stablesimple}, and hence the critical point of $\mathcal{M}^{Don}$ is the unique minimum by Proposition \ref{lemmdonconv}. Theorem \ref{thmbergexp} implies that there exists a sequence $h_k \in \mathcal{H}_k$ of Fubini--Study metrics that converge to $h_{\mathrm{ref}}$ in $C^{2p+2}$ for any $p\geq 2$. We define the reference metric at each $k$ to be the one defined by $h_k := Q^*(k) Q(k)$.

Before stating and proving Proposition \ref{pmdproperhp}, which is the main result of this section, we prove the following rather technical lemma that we need in its proof.

\begin{lem} \label{lmdproperhp}
	Suppose that we have a sequence $\{ h_{t_i \zeta_i} \}_{i \in \mathbb{N}}$, where
	\begin{equation*}
		h_{t_i \zeta_i} := Q^*(k_i) e^{\zeta_i^* t_i} e^{\zeta_i t_i } Q (k_i) \in \mathcal{H}_{k_i}
	\end{equation*}
	with $\zeta_i \in \mathfrak{gl} (H^0 (\mathcal{E} (k_i))^{\vee})$. Suppose also that each $h_{t_i \zeta_i}$ satisfies the scaling convention (\ref{cvsclngh}). For any fixed constant $\epsilon_J \in (0,1/4)$ the following holds: for all $i \in \mathbb{N}$ there exists $\xi_i \in \mathfrak{gl} (H^0 (\mathcal{E} (k_i))^{\vee})$ which satisfies
	\begin{enumerate}
		\item $\Vert \xi_i - \zeta_i \Vert_{op} \le 1$;
		\item $J^{\mathrm{NA}}(\xi_i , k_i) \ge \epsilon_J$;
		\item $\mathcal{M}^{Don} (h_{t_i \zeta_i}) = \mathcal{M}^{Don} (h_{t_i \xi_i})$;
		\item $h_{t_i \xi_i}$ satisfies the scaling convention (\ref{cvsclngh}).
	\end{enumerate}
\end{lem}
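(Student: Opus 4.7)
The plan is a case analysis on $J^{\mathrm{NA}}(\zeta_i, k_i)$. If $J^{\mathrm{NA}}(\zeta_i, k_i) \ge \epsilon_J$ already, I would simply set $\xi_i := \zeta_i$: conclusions (1)--(4) are then immediate, with zero operator-norm difference in (1). The substance of the lemma therefore lies in the complementary regime $J^{\mathrm{NA}}(\zeta_i, k_i) < \epsilon_J$, where, by Definition \ref{defjna}, the non-torsion weights $w_{\hat{1}} > \cdots > w_{\hat{\nu}}$ of $\zeta_i$ attached to the Quot-scheme limit (Section \ref{revqslimfs}) span an interval of length less than $\epsilon_J < 1/4$.

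In this regime I would perturb additively, $\xi_i := \zeta_i + \eta_i$, with $\eta_i$ hermitian, supported on (the sum of) the non-torsion eigenspaces of $\zeta_i$, with $\Vert \eta_i \Vert_{op} \le 1$, and whose eigenvalues span an interval of length at least $3/4$. By the Weyl inequality for eigenvalues of sums of hermitian matrices, the spread of $\xi_i$ on those components then exceeds $3/4 - \epsilon_J > 1/2 > \epsilon_J$, yielding conditions (1) and (2). Since the weights of $\xi_i$ on the torsion components of the Quot-scheme limit enter neither into $J^{\mathrm{NA}}$ (by Definition \ref{defjna}) nor, via \cite[Lemma 2.5]{HK1}, into the identification of the non-torsion filtration, we may arrange that $\eta_i$ does not disturb the non-torsion structure of $\zeta_i$.

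The delicate point is condition (3). The scaling invariance (\ref{mdsclinv}), together with Remark \ref{jnascalinv}, shows that shifting $\xi_i$ by a real multiple of $\mathrm{Id}$ preserves both $\mathcal{M}^{Don}$ and $J^{\mathrm{NA}}$, so this one-parameter freedom is compatible with (1), (2) and (4) but cannot on its own force $\mathcal{M}^{Don}(h_{t_i \xi_i})$ to take the prescribed value $\mathcal{M}^{Don}(h_{t_i \zeta_i})$. The plan is to embed $\eta_i$ in a multi-parameter family of admissible perturbations $\zeta_i + s_1 \eta_i^{(1)} + \cdots + s_m \eta_i^{(m)}$ inside the operator-norm ball of radius $1$ around $\zeta_i$, and to exploit continuity of $\mathcal{M}^{Don}$ (Lemma \ref{lemhpprop}(1)) together with its convexity along geodesics (Proposition \ref{lemmdonconvsm}) to locate a point on the codimension-one level set $\{\xi : \mathcal{M}^{Don}(h_{t_i \xi}) = \mathcal{M}^{Don}(h_{t_i \zeta_i})\}$ which still satisfies the spread bound of (2). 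A final identity shift achieves the scaling normalisation (\ref{cvsclngh}) required by (4) without affecting (1)--(3).

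I expect the hard part to be this last level-set argument: producing, for arbitrary $\zeta_i$, a compensating hermitian direction along which $\mathcal{M}^{Don}$ decreases from $\mathcal{M}^{Don}(h_{t_i(\zeta_i + \eta_i)})$ back down to $\mathcal{M}^{Don}(h_{t_i \zeta_i})$, while keeping $\Vert \cdot \Vert_{op}$ within the unit ball and preserving the eigenvalue spread uniformly in $k_i$. The first-variation formula of Lemma \ref{derivmdon}, together with the uniform Fubini--Study estimates of Proposition \ref{parczestzeta} (which decouple these variations from $k_i$), should supply the required analytic input for the intermediate-value step.
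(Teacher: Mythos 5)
Your proposal takes a genuinely different route from the paper --- ``perturb first to raise $J^{\mathrm{NA}}$, then correct $\mathcal{M}^{Don}$ afterwards'' --- and the step you yourself flag as the hard part is precisely where the argument has a gap. An intermediate-value search over a multi-parameter family $\zeta_i + s_1\eta_i^{(1)} + \cdots + s_m\eta_i^{(m)}$ will certainly meet the level set $\{\xi : \mathcal{M}^{Don}(h_{t_i\xi}) = \mathcal{M}^{Don}(h_{t_i\zeta_i})\}$ somewhere (e.g.\ at $\zeta_i$ itself), but nothing in your plan guarantees that it meets it at a point where the spread bound $J^{\mathrm{NA}} \ge \epsilon_J$ still holds: the obvious correction --- retracting toward $\zeta_i$ --- destroys exactly the spread you created. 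Moreover the tools you invoke for this step do not apply as stated: Proposition \ref{lemmdonconvsm} gives convexity of $\mathcal{M}^{Don}$ along \emph{geodesics} in $\mathcal{H}_\infty$, whereas the paths $\tau \mapsto h_{t_i(\zeta_i + \tau\eta_i)}$ are Bergman paths and are not geodesics in general, so convexity is not available along your family. A secondary issue is the interaction of your norm budget with condition (4): the identity shift needed for the scaling convention (\ref{cvsclngh}) \emph{does} change $\Vert \xi_i - \zeta_i\Vert_{op}$, so you must reserve slack for it (centring $\eta_i$ so its eigenvalues lie in $[-3/8,3/8]$ would do, but this needs to be said).

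The idea you are missing, and which the paper uses to avoid the correction step entirely, is that the derivative $\delta\mathcal{M}^{Don}|_{\eta}$, restricted to hermitian $\tilde\eta$, is a real-linear functional and hence has rank at most $1$; its kernel is therefore a hyperplane of real dimension $N_{k_i}(N_{k_i}+1)/2 - 1$ inside the hermitian matrices. For $k_i$ large this hyperplane is big enough to contain, at each $\eta$, a direction $\tilde\eta$ commuting with $\eta$ along which $J^{\mathrm{NA}}$ strictly increases (splitting an eigenspace if $\hat\nu = \hat 1$). Flowing along a compactly supported vector field built from such directions keeps $\mathcal{M}^{Don}(h_{t_i\xi_{i,\tau}})$ \emph{exactly} constant while $J^{\mathrm{NA}}(\xi_{i,\tau},k_i)$ increases monotonically to $\epsilon_J$, all within the ball $\Vert\xi - \zeta_i\Vert_{op} \le 2\epsilon_J$; the final identity shift then costs at most another $2\epsilon_J$, whence $\Vert\xi_i - \zeta_i\Vert_{op} \le 4\epsilon_J \le 1$. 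In short: move inside the level set from the start rather than leaving it and trying to return.
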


\begin{proof}
	For each $\zeta_i$ we shall construct a path $\{ \xi_{i, \tau} \}_{0 \le \tau \le 1}$ in $\mathfrak{gl}(H^0(\mathcal{E}(k_i))^{\vee})$ such that $\xi_{i,0} = \zeta_i$ and $\xi_{i,1}$ satisfies the first three properties in the statement.

For each $\eta \in \mathfrak{gl}(H^0(\mathcal{E}(k_i))^{\vee})$ we define $h_{\eta}$ to be $Q^*(k_i) e^{\eta^*} e^{\eta}Q(k_i)$, and view the Donaldson functional as a map defined on $\mathfrak{gl}(H^0(\mathcal{E}(k_i))^{\vee})$ by
	\begin{equation*}
		\mathcal{M}^{Don} :  \mathfrak{gl}(H^0(\mathcal{E}(k_i))^{\vee}) \ni \eta \mapsto \mathcal{M}^{Don} (h_{\eta}) \in \mathbb{R} ,
	\end{equation*}
	which implies that its derivative $\delta \mathcal{M}^{Don} |_{\eta}$ at $\eta$ is a linear map
	\begin{equation*}
		\delta \mathcal{M}^{Don} |_{\eta} : \mathfrak{gl}(H^0(\mathcal{E}(k_i))^{\vee}) \ni \tilde{\eta} \mapsto \delta \mathcal{M}^{Don} |_{\eta} (\tilde{\eta}) \in \mathbb{R} .
	\end{equation*}
	We further restrict the domain of $\delta \mathcal{M}^{Don} |_{\eta}$ to the set of hermitian matrices, and consider $\tilde{\eta}$ that is hermitian. Since $\delta \mathcal{M}^{Don} |_{\eta}$ is a linear map in $\tilde{\eta}$ of (real) rank 1, the real dimension of its kernel is $N_{k_i} (N_{k_i}+1)/2 -1$. If $k_i$ is large enough for all $i$, we can thus define a smooth vector field on $\mathfrak{gl}(H^0(\mathcal{E}(k_i))^{\vee})$ in such a way that it defines a tangent vector $\tilde{\eta}$ at $\eta$ with the properties that
	\begin{itemize}
		\item $\tilde{\eta}$ commutes with $\eta$ (thus we may assume that $\tilde{\eta}$ and $\eta$ are both diagonal), and 
		\item $J^{\mathrm{NA}}(\eta + a_{\eta} \tilde{\eta} , k_i) > J^{\mathrm{NA}}(\eta , k_i)$ if $a_{\eta} >0$ is small enough.
	\end{itemize}
	By applying the cutoff function, we may further assume that the vector field is compactly supported in the region
	\begin{equation*}
		\{ \xi \in \mathfrak{sl}(H^0(\mathcal{E}(k_i))^{\vee}) \mid  \Vert \xi - \zeta_i \Vert_{op} \le 2 \epsilon_J \} .
	\end{equation*}
	Observe also that, given a smooth vector field that is supported on the above compact region of $\mathfrak{gl}(H^0(\mathcal{E}(k_i))^{\vee})$, we can always construct its integral curve emanating from $\zeta_i$. Thus we can construct a path $\{ \xi_{i, \tau} \}_{0 \le \tau \le 1}$, with $\xi_{i,0} = \zeta_i$ and $\Vert \xi_{i, \tau} - \zeta_i \Vert_{op} \le 2 \epsilon_J$ for $0 \le \tau \le 1$, so that $J^{\mathrm{NA}}(\xi_{i, \tau} , k_i)$ is monotonically increasing in $\tau$. Moreover, writing $w_{\hat{1}}(i, \tau ), \dots , w_{\hat{\nu}} (i, \tau )$ for the weights of $\xi_{i, \tau}$ (as defined in (\ref{defsubswgt})) and recalling $J^{\mathrm{NA}}(\xi_{i, \tau} , k_i) = w_{\hat{1}} (i, \tau) - w_{\hat{\nu}} (i, \tau)$ (Definition \ref{defjna}), the above argument means that either $w_{\hat{1}}(i, \tau )$ increases or $w_{\hat{\nu}}(i, \tau )$ decreases (or both) as $\tau$ increases (if $\hat{\nu} = \hat{1}$ we simply choose the tangent vector at $\xi_{i,0} = \zeta_i$ to split the $w_{\hat{1}} (i , 0)$-eigenspace and continue), which then implies that $J^{\mathrm{NA}}(\xi_{i, \tau} , k_i)$ increases at least by the above increment in $w_{\hat{1}}(i, \tau )$ or $w_{\hat{\nu}}(i, \tau )$, as $\tau$ increases.

	Thus, we can construct a path $\{ \xi_{i, \tau} \}_{0 \le \tau \le 1}$ such that its endpoint $\xi_{i,1} \in \mathfrak{gl}(H^0(\mathcal{E}(k_i))^{\vee})$ satisfies
	\begin{itemize}
		\item $\Vert \xi_{i,1} - \zeta_i \Vert_{op} \le 2 \epsilon_J$,
		\item $J^{\mathrm{NA}}(\xi_{i,1} , k_i) \ge \epsilon_J$, and
		\item $\mathcal{M}^{Don} (h_{t_i \zeta_i}) = \mathcal{M}^{Don} (h_{t_i \xi_{i,1}})$.
	\end{itemize}
	Finally, note that we may add a constant multiple of the identity to $\xi_{i,1}$ so that $\xi_i := \xi_{i,1} + c_{\xi} I$ satisfies the fourth property stated in the lemma, i.e.~$h_{t_i \xi_i}$ satisfies the scaling convention (\ref{cvsclngh}). Since $h_{t_i \zeta_i}$ satisfies (\ref{cvsclngh}) and $\Vert \xi_{i,1} - \zeta_i \Vert_{op} \le 2 \epsilon_J$, we have $| c_{\xi} | \le 2 \epsilon_J$ and hence $\Vert \xi_{i} - \zeta_i \Vert_{op} \le 4 \epsilon_J \le 1$. Recalling $J^{\mathrm{NA}}(\xi_{i,1} , k_i) = J^{\mathrm{NA}}(\xi_{i} , k_i)$ by Remark \ref{jnascalinv} and $\mathcal{M}^{Don} (h_{t_i \xi_{i,1}}) = \mathcal{M}^{Don} (h_{t_i \xi_{i}})$ by (\ref{mdsclinv}), we establish all the four conditions stated in the lemma.
\end{proof}

The following is the main technical result of this section.
 
\begin{prop} \label{pmdproperhp}
	Suppose that Hypothesis \ref{conjpczero} holds and that $\mathcal{E}$ is slope stable. Then there exists $h_{\mathrm{min}} \in \mathcal{H}_{[p]}$ which attains the minimum of $\mathcal{M}^{Don}$.
\end{prop}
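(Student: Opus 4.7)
My plan is the direct method of the calculus of variations: I build a minimizing sequence of Fubini--Study metrics whose $C^{p+1}$-norms can be controlled uniformly via Proposition \ref{parczestzeta}, then extract a $C^p$-convergent subsequence whose limit is the desired minimizer.

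Hypothesis \ref{conjpczero} combined with Proposition \ref{pcorrdonna} implies that $\mathcal{M}^{Don}$ is bounded from below on $\mathcal{H}_{\infty}$, and hence on $\mathcal{H}_{[p]}$ by Lemma \ref{lemhpprop}(4). I pick a minimizing sequence $\{h_i\} \subset \mathcal{H}_{\infty}$. Using Corollary \ref{corthmbergexp} and the continuity of $\mathcal{M}^{Don}$ on $\mathcal{H}_{[p]}$, I replace each $h_i$ by a Fubini--Study metric of the form $h_{t_i \zeta_i} = Q^*(k_i) e^{t_i \zeta_i^*} e^{t_i \zeta_i} Q(k_i) \in \mathcal{H}_{k_i}$, with $\zeta_i$ hermitian of operator norm at most $1$ and rational eigenvalues (by density), while arranging that the reference metrics $h_{k_i} = Q^*(k_i) Q(k_i)$ converge to $h_{\mathrm{ref}}$ in $C^{2p+2}$. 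After an overall constant rescaling I may further assume that each $h_{t_i \zeta_i}$ satisfies the scaling convention \eqref{cvsclngh}, and that $\{h_{t_i \zeta_i}\}$ is still a minimizing sequence.

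Next, I fix $\epsilon_J \in (0,1/4)$ and apply Lemma \ref{lmdproperhp} to produce, for each $i$, some $\xi_i \in \mathfrak{gl}(H^0(\mathcal{E}(k_i))^{\vee})$ with $\Vert \xi_i - \zeta_i \Vert_{op} \leq 1$, $J^{\mathrm{NA}}(\xi_i, k_i) \geq \epsilon_J$, the scaling convention preserved, and $\mathcal{M}^{Don}(h_{t_i \xi_i}) = \mathcal{M}^{Don}(h_{t_i \zeta_i})$. Thus $\{h_{t_i \xi_i}\}$ is still a minimizing sequence, with $\Vert \xi_i \Vert_{op} \leq 2$. Proposition \ref{uniflemmna} combined with Hypothesis \ref{conjpczero} (as in Proposition \ref{pcorrdonna}) then yields
\begin{equation*}
	\mathcal{M}^{Don}(h_{t_i \xi_i}, h_{\mathrm{ref}}) \geq 2 c_{\mathcal{E}} J^{\mathrm{NA}}(\xi_i, k_i) \, t_i - c_{\mathrm{ref}} \geq 2 c_{\mathcal{E}} \epsilon_J \, t_i - c_{\mathrm{ref}},
\end{equation*}
and since the left-hand side is bounded above along the minimizing sequence, this forces $t_i \leq T$ uniformly in $i$ for some constant $T>0$.

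With $\Vert t_i \xi_i \Vert_{op} \leq 2T$ uniformly bounded and $h_{k_i} \to h_{\mathrm{ref}}$ in $C^{2(p+1)}$, Proposition \ref{parczestzeta} gives $\Vert h_{t_i \xi_i} - h_{k_i} \Vert_{C^{p+1}} \leq C_{p+1}(2T, h_{\mathrm{ref}})$, so $\{h_{t_i \xi_i}\}$ is uniformly bounded in $C^{p+1}$. Arzel\`a--Ascoli (via the compact embedding $C^{p+1} \hookrightarrow C^p$) extracts a $C^p$-convergent subsequence with limit $h_{\mathrm{min}}$; the normalization \eqref{cvsclngh}, preserved along the sequence, ensures that $h_{\mathrm{min}}$ is a positive-definite hermitian metric of class $C^p$, i.e.~an element of $\mathcal{H}_{[p]}$. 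Continuity of $\mathcal{M}^{Don}$ on $\mathcal{H}_{[p]}$ (Lemma \ref{lemhpprop}(1)) finally gives $\mathcal{M}^{Don}(h_{\mathrm{min}}) = \lim_i \mathcal{M}^{Don}(h_{t_i \xi_i}) = \inf_{\mathcal{H}_{[p]}} \mathcal{M}^{Don}$. The step I expect to be the main obstacle is the modification via Lemma \ref{lmdproperhp}: only the uniform positive lower bound $J^{\mathrm{NA}}(\xi_i, k_i) \geq \epsilon_J$ converts the a priori bound on $\mathcal{M}^{Don}$ into a uniform bound on the ``time'' parameter $t_i$, and without that bound the uniform Fubini--Study estimates of Section \ref{FSestim} would not be applicable; the remainder is a standard direct minimization.
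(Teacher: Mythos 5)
Your proposal is correct and uses exactly the same ingredients as the paper's proof (the Fubini--Study approximation via Corollary \ref{corthmbergexp}, the modification Lemma \ref{lmdproperhp}, the uniform coercivity from Hypothesis \ref{conjpczero} and Proposition \ref{uniflemmna}, the $C^{p+1}$-bounds of Proposition \ref{parczestzeta}, and Arzel\`a--Ascoli). The only difference is that you run the chain of implications directly (bounded energy $\Rightarrow$ bounded $t_i$ $\Rightarrow$ bounded $C^{p+1}$-norms $\Rightarrow$ compactness), whereas the paper argues by contradiction along the contrapositive chain; the mathematical content is identical.
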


\begin{proof}
	As we saw in Proposition \ref{pcorrdonna}, Hypothesis \ref{conjpczero} and slope stability of $\mathcal{E}$ implies that $\mathcal{M}^{Don}$ is bounded below over $\mathcal{H}_{\infty}$. We pick $a>0$ so that the interval $[-a, a]$ contains the infimum of $\mathcal{M}^{Don}$ over $\mathcal{H}_{\infty}$. Let $$\mathcal{A}:= \left(\mathcal{M}^{Don}\right)^{-1} ([-a, a]) \cap \mathcal{H}_{\infty}$$ be the preimage of the interval $[-a, a]$ under $\mathcal{M}^{Don}$.

We fix some notation. Pick $\epsilon >0$ to be sufficiently small, so that $2a> a+ \epsilon$. Given a minimising sequence $\{ h_i \}_{i \in \mathbb{N}} \subset \mathcal{A}$ for $\mathcal{M}^{Don}$, Corollary \ref{corthmbergexp}, afforded by Theorem \ref{thmbergexp}, implies that for each $h_i$ there exists a Fubini--Study metric $\tilde{h}_i \in  \mathcal{H}_{k_i}$ such that
\begin{equation*}
	\left\Vert \tilde{h}_i - h_i \right\Vert_{C^{p+1}} + \left| \mathcal{M}^{Don}(h_i) - \mathcal{M}^{Don}(\tilde{h}_i) \right| < \epsilon,
\end{equation*}
by taking $k_i \in \mathbb{N}$ to be sufficiently large. Then, writing 
\begin{equation*}
	\mathcal{A}_{\epsilon} := \left(\mathcal{M}^{Don}\right)^{-1} ([-a- \epsilon , a+ \epsilon ]) \cap \mathcal{H}_{\infty} ,
\end{equation*}
we get a sequence $\{ \tilde{h}_i \}_{i \in \mathbb{N}}$ in $\mathcal{A}_{\epsilon}$ of Fubini--Study metrics (with $\tilde{h}_i \in \mathcal{H}_{k_i}$), which approximates the sequence $\{ h_i \}_{i \in \mathbb{N}}$. By choosing each $k_i$ to be large enough, we may assume that the sequence $\{ \tilde{h}_i \}_{i \in \mathbb{N}}$ itself is a minimising sequence for $\mathcal{M}^{Don}$. Recalling the scale invariance of $\mathcal{M}^{Don}$ (\ref{mdsclinv}), we may further assume that for all $i \in \mathbb{N}$ the metrics $h_i$ and $\tilde{h}_i$ satisfy the scaling convention (\ref{cvsclngh}).

Our aim in what follows is to prove that there exists a minimising sequence for $\mathcal{M}^{Don}$ that contains a convergent subsequence in $\mathcal{H}_{[p]}$.

Suppose not. Then no minimising sequence for $\mathcal{M}^{Don}$ contains a convergent subsequence in $\mathcal{H}_{[p]}$. Pick an arbitrary minimising sequence $\{ h_i \}_{i \in \mathbb{N}} \subset \mathcal{A}$. By taking $k_i$'s to be large enough, we may further assume that its Fubini--Study approximation $\{ \tilde{h}_i \}_{i \in \mathbb{N}}$ (which also defines a minimising sequence for $\mathcal{M}^{Don}$) contains no convergent subsequence in the $C^p$-norm either. Thus, the Arzel\`a--Ascoli theorem implies that the $C^{p+1}$-norm of $\tilde{h}_i$ cannot be bounded.

We may write $$\tilde{h}_i = Q^* ( k_i) e^{\zeta^*_i t_i} e^{\zeta_i t_i} Q(k_i),$$ where $\zeta_i \in \mathfrak{gl}(H^0(\mathcal{E}(k_i))^{\vee})$ and $\Vert \zeta_i \Vert_{op} =1$. Further, by using Lemma \ref{lmdproperhp}, we may replace $\zeta_i$ by $\xi_i$ for each $i$, so that 
\begin{equation*}
	\tilde{h}'_i := Q^* ( k_i) e^{\xi^*_i t_i} e^{\xi_i t_i} Q(k_i) ,
\end{equation*}
with $J^{\mathrm{NA}} (\xi_i ,k_i ) \ge \epsilon_J$, is still a minimising sequence for the Donaldson functional; in particular $\{ \tilde{h}'_i \}_{i \in \mathbb{N}} \subset \mathcal{A}_{\epsilon}$. Observe that Lemma \ref{lmdproperhp} implies that each $\tilde{h}'_i$ satisfies the scaling convention (\ref{cvsclngh}). Recalling our original hypothesis that no minimising sequence contains a convergent subsequence, $\{ \tilde{h}'_i \}_{i \in \mathbb{N}}$ contains no convergent subsequence in the $C^p$-norm, and hence the $C^{p+1}$-norm of $\tilde{h}'_i$ cannot be bounded.

Recalling that $\{ h_k \}_{k \in \mathbb{N}}$, with $h_{k_i} := Q^*(k_i)Q(k_i)$, is assumed to converge to $h_{\mathrm{ref}}$ in the $C^{2p+2}$-norm, Proposition \ref{parczestzeta} implies that $\Vert \xi_i t_i \Vert_{op}$ cannot remain bounded as $i \to + \infty$. Recalling the convention $\Vert \zeta_i \Vert_{op} \le 1$ from Section \ref{revqslimfs} (or \cite[Remark 2.1]{HK1}) and $\Vert \xi_i - \zeta_i \Vert_{op} \le 1$ (which together imply $\Vert \xi_i \Vert_{op} \le 2$), this in particular implies that the sequence $\{ t_i \}_{i \in \mathbb{N}} \subset \mathbb{R}_{\ge 0}$ is unbounded. Thus, by taking a subsequence if necessary, we may assume that $t_i$ increases monotonically to $+ \infty$ as $i \to  + \infty$. Now, Proposition \ref{pcorrdonna} and Lemma \ref{lmdproperhp} (see also Remark \ref{remscalslgl} concerning the scaling) imply that
\begin{align*}
	\mathcal{M}^{Don}(\tilde{h}'_i) &\ge \mathcal{M}^{\mathrm{NA}} (\xi_i ,k_i ) t_i - c_{\mathrm{ref}} \\
	&\ge 2 c_{\mathcal{E}} J^{\mathrm{NA}} (\xi_i ,k_i ) t_i - c_{\mathrm{ref}} \\
	&\ge 2 c_{\mathcal{E}} \epsilon_J t_i - c_{\mathrm{ref}} \to + \infty
\end{align*}
as $i \to + \infty$; note that the inequality
\begin{equation*}
	\mathcal{M}^{\mathrm{NA}} (\xi_i ,k_i ) \ge 2 c_{\mathcal{E}} J^{\mathrm{NA}} (\xi_i ,k_i ) > 0
\end{equation*}
which follows from the slope stability of $\mathcal{E}$ by Proposition \ref{prop62hk1}, is crucial in the above. Hence
\begin{equation*}
	\mathcal{M}^{Don} (\tilde{h}'_i) > 2 a> a+ \epsilon 
\end{equation*}
for all large enough $i$, contradicting $\{ \tilde{h}'_i \}_{i \in \mathbb{N}} \subset  \mathcal{A}_{\epsilon}$.

Summarising our discussion above, our original assumption that no minimising sequence for $\mathcal{M}^{Don}$ contains a convergent subsequence in $\mathcal{H}_{[p]}$ leads to a contradiction, and hence there must exist a minimising sequence for $\mathcal{M}^{Don}$ which contains a convergent subsequence in $\mathcal{H}_{[p]}$. We write its limit as $h_{\mathrm{min}}$, which is a well-defined hermitian metric in $\mathcal{H}_{[p]}$ by the scaling convention (\ref{cvsclngh}) and necessarily attains the minimum of the Donaldson functional.
\end{proof}

\begin{rem} \label{remscalslgl}
	In the above proof, we wrote $\mathcal{M}^{\mathrm{NA}} (\xi_i ,k_i )$ for $\xi_i \in \mathfrak{gl} (H^0(\mathcal{E}(k_i))^{\vee})$ and used the results in Sections \ref{scrmrhk1} and \ref{ssstaust} for $\xi_i \in \mathfrak{gl} (H^0(\mathcal{E}(k_i))^{\vee})$, although strictly speaking these results were stated only for $\mathfrak{sl} (H^0(\mathcal{E}(k_i))^{\vee})$. This is not significant, since we may perform rescaling as $\xi_i \mapsto \xi_i + c \mathrm{Id}$ ($c \in \mathbb{R}$) so that $\xi_i + c \mathrm{Id} \in \mathfrak{sl} (H^0(\mathcal{E}(k_i))^{\vee})$, by noting that $\mathcal{M}^{Don}$ and $J^{\mathrm{NA}}$ are both invariant under an overall rescaling (see (\ref{mdsclinv}) and Remark \ref{jnascalinv}).
\end{rem}


It remains to show that $h_{\mathrm{min}}$, which attains the minimum of $\mathcal{M}^{Don}$, is the smooth Hermitian--Einstein metric. More precisely, we prove the following.

\begin{prop} \label{propregmin}
	Suppose that $h_{\mathrm{min}} \in \mathcal{H}_{[p]}$ attains the minimum of $\mathcal{M}^{Don}$ over $\mathcal{H}_{[p]}$, and that $\mathcal{E}$ is simple. Then $h_{\mathrm{min}}$ is unique in $\mathcal{H}_{[p]}$. Moreover, $h_{\mathrm{min}}$ is smooth and satisfies
	\begin{equation*}
		\Lambda_{\omega} F_{h_{\mathrm{min}}} = \frac{\mu(\mathcal{E})}{\Vol_L} \Id_{\mathcal{E}}.
	\end{equation*}
	In other words, $h_{\mathrm{min}}$ is the unique minimiser of $\mathcal{M}^{Don}$ in $\mathcal{H}_{\infty}$, which is the well-defined Hermitian--Einstein metric.
\end{prop}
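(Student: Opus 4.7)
The plan is to extract the three assertions of Proposition \ref{propregmin} from results already assembled, with the smoothness of $h_{\mathrm{min}}$ being the only substantive remaining step. The uniqueness of $h_{\mathrm{min}}$ in $\mathcal{H}_{[p]}$ is immediate from Lemma \ref{lemhpprop}(3): strict convexity of $\mathcal{M}^{Don}$ along nontrivial geodesics (valid because $\mathcal{E}$ is simple by Lemma \ref{stablesimple}) together with geodesic completeness (Lemma \ref{lemhpprop}(5)) forces any two minimisers to agree up to an overall constant scaling, which is in turn pinned down by the scaling convention (\ref{cvsclngh}). Similarly, Lemma \ref{lemhpprop}(2) guarantees that $h_{\mathrm{min}}$ satisfies the Hermitian--Einstein equation in the classical pointwise sense, since $h_{\mathrm{min}} \in C^p$ with $p \ge 2$ ensures that $F_{h_{\mathrm{min}}}$ is a well-defined $C^{p-2}$-tensor.

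The substantive step is upgrading the regularity of $h_{\mathrm{min}}$ from $C^p$ to $C^\infty$. I would proceed by local elliptic bootstrap. Work in a holomorphic trivialisation of $\mathcal{E}$ in which $h_{\mathrm{min}}$ is a hermitian matrix-valued function of class $C^p$; the Chern curvature $F_{h_{\mathrm{min}}} = \bar{\partial}(h_{\mathrm{min}}^{-1}\partial h_{\mathrm{min}})$ is a second-order differential expression in $h_{\mathrm{min}}$, so the Hermitian--Einstein equation $\Lambda_\omega F_{h_{\mathrm{min}}} = (\mu(\mathcal{E})/\Vol_L)\Id_{\mathcal{E}}$ is a quasilinear second-order PDE system for $h_{\mathrm{min}}$ whose linearisation differs from the scalar $\bar{\partial}$-Laplacian acting on endomorphisms only by lower-order terms, hence is uniformly elliptic. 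Standard Schauder bootstrap then applies: if $h_{\mathrm{min}} \in C^{k,\alpha}$ then the coefficients of the linearised problem lie in $C^{k-1,\alpha}$ and the inhomogeneous term is smooth, yielding $h_{\mathrm{min}} \in C^{k+1,\alpha}$; iterating this from the starting regularity proves $h_{\mathrm{min}} \in C^\infty$.

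The main obstacle is precisely this regularity step, where some genuine PDE theory enters. However, it fits within the paper's stated aim of elementary analysis, since one only needs local Schauder estimates applied to a classical (not distributional) solution, and no openness-closedness or continuity-method argument is required. Once smoothness is established, the final sentence of the proposition follows by noting $\mathcal{H}_\infty \subset \mathcal{H}_{[p]}$: the infimum of $\mathcal{M}^{Don}$ over $\mathcal{H}_{[p]}$ coincides with that over $\mathcal{H}_\infty$, attained uniquely by $h_{\mathrm{min}} \in \mathcal{H}_\infty$, completing the proof.
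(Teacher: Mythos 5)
Your proof is correct, but the smoothness step is argued by a genuinely different route from the paper's. You upgrade $h_{\mathrm{min}}$ from $C^p$ to $C^\infty$ by local elliptic bootstrap, reading the Hermitian--Einstein equation as a quasilinear second-order elliptic system and iterating Schauder estimates; this works (after passing from $C^p$ to $C^{p-1,\alpha}$ to enter the Hölder scale) and has the advantage of needing only the single minimiser in one fixed $\mathcal{H}_{[p]}$. The paper instead avoids elliptic regularity altogether: it invokes the existence of a minimiser $h_{\mathrm{min},p'}$ in $\mathcal{H}_{[p']}$ for every $p' \ge p$ (available from Proposition \ref{pmdproperhp} for each $p'$), observes that $h_{\mathrm{min},p'+1} \in \mathcal{H}_{[p'+1]} \subset \mathcal{H}_{[p']}$ is also a critical point of $\mathcal{M}^{Don}$ in $\mathcal{H}_{[p']}$, and concludes from the uniqueness statement of Proposition \ref{lemmdonconv} that $h_{\mathrm{min},p} = h_{\mathrm{min},p'}$ for all $p'$, hence $h_{\mathrm{min}} \in \bigcap_{p'} \mathcal{H}_{[p']} = \mathcal{H}_\infty$. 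The trade-off is real: your argument is more self-contained at the level of this proposition but imports Schauder theory, which runs against the paper's explicitly stated aim of keeping the analysis elementary apart from the Bergman kernel expansion; the paper's argument is softer (pure convexity and uniqueness) but leans on the existence result across the whole scale of spaces $\mathcal{H}_{[p']}$. Both the uniqueness and the Hermitian--Einstein property are handled as in the paper, via Lemma \ref{lemhpprop}.
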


\begin{proof}
That $h_{\mathrm{min}}$ satisfies the Hermitian--Einstein equation, together with its uniqueness in $\mathcal{H}_{[p]}$, follows from Lemma \ref{lemhpprop}. We use the uniqueness to prove that $h_{\mathrm{min}} \in \mathcal{H}_{[p]}$ is in fact smooth. Suppose that, for each choice of $p$, we write $h_{\mathrm{min},p}$ for the minimiser of $\mathcal{M}^{Don}$ in $\mathcal{H}_{[p]}$. The uniqueness of $h_{\mathrm{min},p} \in \mathcal{H}_{[p]}$, together with $\mathcal{H}_{[p+1]} \subset \mathcal{H}_{[p]}$ for all $p \ge 2$, implies $h_{\mathrm{min},p} = h_{\mathrm{min},p+1} = h_{\mathrm{min}, p+2} = \cdots$, by observing that $h_{\mathrm{min},p+1}$ is a critical point of $\mathcal{M}^{Don}$ in $\mathcal{H}_{[p]}$ as well (as it satisfies the Hermitian--Einstein equation). Thus $h_{\mathrm{min}}$ is smooth and minimises $\mathcal{M}^{Don}$ over $\mathcal{H}_{\infty}$, which is necessarily the unique Hermitian--Einstein metric by Lemma \ref{derivmdon} and Proposition \ref{lemmdonconvsm}.
\end{proof}

We have thus obtained the following main result of this paper.
\begin{thm} \label{mainthmss}
	Suppose that Hypothesis \ref{conjpczero} is true. Then $\mathcal{E}$ admits a Hermitian--Einstein metric if it is slope stable. Moreover, the analysis that we need in the proof is elementary, except for the asymptotic expansion of the Bergman kernel (Theorem \ref{thmbergexp}).
\end{thm}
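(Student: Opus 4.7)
The plan is to stitch together the two main results already established in this section, Propositions~\ref{pmdproperhp} and~\ref{propregmin}, with the elementary observation that slope stability implies simplicity (Lemma~\ref{stablesimple}).

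First I invoke Lemma~\ref{stablesimple} to upgrade the slope stability hypothesis to simplicity of $\mathcal{E}$; this is required for the uniqueness/regularity half of the argument, which uses that $\mathcal{M}^{Don}$ is strictly convex along nontrivial geodesics. Next I apply Proposition~\ref{pmdproperhp}, which provides, for any fixed $p \geq 2$, a minimiser $h_{\mathrm{min}} \in \mathcal{H}_{[p]}$ of $\mathcal{M}^{Don}$ over the $C^p$-completion. Finally, Proposition~\ref{propregmin} (whose hypothesis is met because $\mathcal{E}$ is simple) promotes $h_{\mathrm{min}}$ from a $C^p$-minimiser to the unique smooth Hermitian--Einstein metric, by means of the bootstrap $h_{\mathrm{min},p} = h_{\mathrm{min},p+1} = \cdots$ afforded by the nested inclusions $\mathcal{H}_{[p+1]} \subset \mathcal{H}_{[p]}$ together with uniqueness of the minimiser in each $\mathcal{H}_{[p]}$. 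This establishes the first assertion of the theorem.

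To verify the second assertion about elementary analysis, I would trace the analytic content of each ingredient in turn. Hypothesis~\ref{conjpczero} is an assumed strengthening of Theorem~\ref{thmlnsdf}, whose proof in~\cite{HK1} uses only elementary complex analysis built on the Quot-scheme limit. Proposition~\ref{uniflemmna} is a combinatorial consequence of the integrality of degrees and ranks on a smooth variety. The uniform Fubini--Study estimates of Section~\ref{FSestim} follow from direct differential-geometric calculations centred on the reproducing identity $\sum_i s_i \otimes s_i^{*_{FS(H)}} = \mathrm{Id}_{\mathcal{E}}$. The existence argument inside Proposition~\ref{pmdproperhp} uses only the Arzel\`a--Ascoli theorem together with the elementary perturbation of Lemma~\ref{lmdproperhp}. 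The regularity step of Proposition~\ref{propregmin} is pure convex analysis on $\mathcal{H}_{[p]}$ (Proposition~\ref{lemmdonconvsm}). The only place where genuinely deep analysis enters is the appeal to Corollary~\ref{corthmbergexp} inside Proposition~\ref{pmdproperhp}, and this corollary in turn rests on Theorem~\ref{thmbergexp}.

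The main obstacle has really been absorbed into Proposition~\ref{pmdproperhp}, so the present theorem itself should be essentially bookkeeping. The substantive point was to prevent a minimising sequence for $\mathcal{M}^{Don}$ from escaping to infinity along a degenerating Bergman ray; this is exactly what the interplay between the uniform coercivity estimate (Hypothesis~\ref{conjpczero} combined with Proposition~\ref{uniflemmna}) and the forced unboundedness of $\Vert \xi_i t_i \Vert_{op}$ — a consequence, via Proposition~\ref{parczestzeta}, of the assumed absence of $C^p$-subsequential convergence — achieves via contradiction. Once that is in place, Theorem~\ref{mainthmss} follows by the three-step sequence sketched above.
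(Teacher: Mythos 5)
Your proposal is correct and follows exactly the paper's own (one-line) proof, which derives the theorem from Propositions \ref{pmdproperhp} and \ref{propregmin}; your explicit invocation of Lemma \ref{stablesimple} to supply the simplicity hypothesis of Proposition \ref{propregmin} is a step the paper makes just before stating Proposition \ref{pmdproperhp}. The tracing of the analytic content to justify the ``elementary except for the Bergman kernel'' clause is likewise consistent with the paper's discussion.
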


\begin{proof}
	This follows from Propositions \ref{pmdproperhp} and \ref{propregmin}.
\end{proof}

\begin{proof}[Proof of Theorem \ref{mainthmhk}]
	We only need to deal with the case where $\mathcal{E}$ is slope polystable but not stable. In such case we have a holomorphic splitting $\mathcal{E} =\oplus_j \mathcal{E}_j$, with each $\mathcal{E}_j$ being a slope stable (and hence irreducible) bundle. We apply Theorem \ref{mainthmss} to each irreducible component $\mathcal{E}_j$, which provides us with the Hermitian--Einstein metric on each $\mathcal{E}_j$ with Einstein constant $\mu(\mathcal{E}_j)$. Since $\mathcal{E}$ is slope polystable the slopes of the $\mathcal{E}_j$ are all equal, and hence the direct sum of Hermitian--Einstein metrics on $\mathcal{E}_j$ gives the Hermitian--Einstein metric on $\mathcal{E}$.
\end{proof}

\appendix

\section{Convexity of the Donaldson functional}

We review a proof of the well-known theorem that the Donaldson functional is convex along geodesics in $\mathcal{H}_{\infty}$. The aim of presenting the proof of such a well-known result is firstly to make explicit that the same argument carries over to the $\mathcal{H}_{[p]}$ version of it as stated in Proposition \ref{lemmdonconv}, and secondly to provide various formulae that will be useful later in the proof of Theorem \ref{propUnif}.

\begin{prop} \textup{(cf.~\cite[Section 6.3]{Kobook})} \label{lemmdonconvH}
 The functional $\mathcal{M}^{Don}$ is convex along geodesics in $\mathcal{H}_{\infty}$, and its critical point attains the global minimum. Moreover, $\mathcal{M}^{Don}$ is strictly convex along nontrivial geodesics if $\mathcal{E}$ is irreducible (in particular if $\mathcal{E}$ is simple) and in this case the critical point is unique up to an overall constant scaling if it exists.
\end{prop}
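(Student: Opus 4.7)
The plan is to compute $\tfrac{d^2}{ds^2}\mathcal{M}^{Don}(h_s,h_{\mathrm{ref}})$ along a geodesic $\{h_s\}\subset\mathcal{H}_{\infty}$ and show it equals a manifestly nonnegative quantity. First I would differentiate the first-derivative formula of Lemma \ref{derivmdon} once more; writing $v_s := h_s^{-1}\partial_s h_s$, the geodesic equation $\partial_s v_s = 0$ kills the term coming from differentiating $v_s$ itself and leaves
\begin{equation*}
\frac{d^2}{ds^2}\mathcal{M}^{Don}(h_s,h_{\mathrm{ref}}) = \int_X \tr\bigl(v_s\,\Lambda_\omega \partial_s F_{h_s}\bigr)\frac{\omega^n}{n!}.
\end{equation*}
A direct computation using $\partial_s(h_s^{-1}) = -v_s h_s^{-1}$ shows $\partial_s(h_s^{-1}\partial h_s) = \partial v_s + [h_s^{-1}\partial h_s,v_s] = D'_{h_s}v_s$, the $(1,0)$-part of the Chern connection of $\mathrm{End}(\mathcal{E})$ applied to $v_s$, whence $\partial_s F_{h_s} = \bar\partial D'_{h_s}v_s$ (up to the conventional factor of $\sqrt{-1}/2\pi$).

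Next I would apply the Kähler identity $[\Lambda_\omega,\bar\partial] = -\sqrt{-1}(D'_{h_s})^{*}$ for $\mathrm{End}(\mathcal{E})$-valued forms, together with $\Lambda_\omega D'_{h_s}v_s = 0$ (since $D'_{h_s}v_s$ is of type $(1,0)$), to rewrite $\Lambda_\omega\bar\partial D'_{h_s}v_s = -\sqrt{-1}(D'_{h_s})^{*}D'_{h_s}v_s$. Integrating by parts on the closed manifold $X$ and combining the two occurrences of $\sqrt{-1}$, one obtains
\begin{equation*}
\frac{d^2}{ds^2}\mathcal{M}^{Don}(h_s,h_{\mathrm{ref}}) = C\int_X |\bar\partial v_s|^2_{h_s,\omega}\frac{\omega^n}{n!}
\end{equation*}
for a positive constant $C$, where I would use the self-adjointness of $v_s$ with respect to $h_s$ (immediate from the definition) to identify $|D'_{h_s}v_s|_{h_s,\omega}$ with $|\bar\partial v_s|_{h_s,\omega}$ pointwise via complex conjugation. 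This establishes convexity along geodesics.

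For the global-minimum assertion, given a critical point $h^{*}\in\mathcal{H}_{\infty}$ and any $h\in\mathcal{H}_{\infty}$, I would invoke geodesic completeness and connect $h_0 = h^{*}$ to $h_1 = h$ by the explicit geodesic $h_s = \exp(s\log(hh^{*\,-1}))h^{*}$. Then $f(s) := \mathcal{M}^{Don}(h_s,h_{\mathrm{ref}})$ is convex with $f'(0)=0$ by Lemma \ref{derivmdon} and the Hermitian--Einstein equation satisfied by $h^{*}$, so $f(1)\ge f(0)$, i.e.~$\mathcal{M}^{Don}(h,h_{\mathrm{ref}})\ge \mathcal{M}^{Don}(h^{*},h_{\mathrm{ref}})$.

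For strict convexity on nontrivial geodesics in the irreducible case, vanishing of $|\bar\partial v_s|^2$ identically on $X$ forces $v_s$ to be a holomorphic endomorphism of $\mathcal{E}$; since $v_s$ is also $h_s$-self-adjoint, its real eigenspaces give an $h_s$-orthogonal holomorphic decomposition of $\mathcal{E}$, and irreducibility then forces $v_s = c\,\Id_{\mathcal{E}}$ for some $c\in\mathbb{R}$, which means the geodesic is trivial. Uniqueness of critical points up to overall constant scaling follows by connecting any two critical points by a geodesic along which $\mathcal{M}^{Don}$ must then be constant (both endpoints being global minima). The main delicate step is the Kähler-identity plus integration-by-parts computation above, where one must track the factors of $\sqrt{-1}$ and signs carefully to obtain the correct nonnegative second-derivative formula; this is however a standard computation (see e.g.~\cite[Section 6.3]{Kobook}), and once it is in hand everything else is formal.
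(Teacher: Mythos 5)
Your proposal is correct and follows essentially the same route as the paper's proof: differentiate the first variation along a geodesic, use $\partial_s F_{h_s}=\bar\partial\nabla^{1,0,\mathrm{End}}_{h_s}v_s$ to express the second derivative as $\Vert\nabla^{1,0,\mathrm{End}}v\Vert^2_{L^2}=\Vert\bar\partial v\Vert^2_{L^2}$ (equal since $v$ is self-adjoint), and then deduce the global minimum, strict convexity via the holomorphic eigenbundle decomposition contradicting irreducibility, and uniqueness by connecting critical points with a geodesic. The only cosmetic difference is that the paper performs the integration by parts by rewriting $\Lambda_\omega(\cdot)\,\omega^n/n!$ as $(\cdot)\wedge\omega^{n-1}/(n-1)!$ and applying Stokes directly, rather than invoking the K\"ahler identity $[\Lambda_\omega,\bar\partial]=-\sqrt{-1}(D')^{*}$.
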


\begin{proof}
Let $\{ h_t \}_t \subset \mathcal{H}_{\infty}$ be a path of hermitian metrics on $\mathcal{E}$ parametrised by $t \in (- \epsilon , \epsilon) \subset \mathbb{R}$. We recall the infinitesimal variation of curvature when we vary $t$, following \cite[Section 4.2]{Kobook}. Suppose that the infinitesimal variation of hermitian metrics can be written as $u := \partial_t |_{t=0} h_t $ (as an element in $\Gamma_{C^{\infty}_X} (\mathcal{E}^{\vee} \otimes \overline{\mathcal{E}^{\vee}})$), and that we write $a_t$ for the connection 1-form on $\mathcal{E}$ defined by $h_t$ and $\bar{\partial}$. Then, by fixing a holomorphic frame to use tensorial notation, we have
\begin{equation*}
	\sum_{\alpha = 1}^r (h_t)_{\alpha \bar{\gamma}} (a_t)^{\alpha}_{\beta} = \partial (h_t)_{\beta \bar{\gamma}},
\end{equation*}
for each $\beta , \gamma =1 , \dots r$, as in (1.4.10) or (4.2.9) of \cite{Kobook}. Differentiating this equation with respect to $t$, we get
\begin{equation*}
	\sum_{\alpha} u_{\alpha \bar{\gamma}} (a_0)^{\alpha}_{\beta} + \sum_{\alpha} (h_0)_{\alpha \bar{\gamma}} \partial_t |_{t=0} (a_t)^{\alpha}_{\beta} = \partial u_{\beta \bar{\gamma}}
\end{equation*}
at $t = 0$. Thus
\begin{equation}  \label{apvacnddlv}
	\sum_{\alpha} (h_0)_{\alpha \bar{\gamma}} \partial_s |_{t=0} (a_t)^{\alpha}_{\beta} = \nabla^{1,0 , \vee}_{h_0} u_{\beta \bar{\gamma}},
\end{equation}
where $\nabla^{1,0 , \vee}_{h_0}$ is the $(1,0)$-part of the covariant derivative on the dual vector bundle $\mathcal{E}^{\vee}$ defined by $h_0$ (and $\bar{\partial}$) as
\begin{equation*}
	\nabla^{1,0 , \vee}_{h_0} u_{\beta \bar{\gamma}} := \partial u_{\beta \bar{\gamma}} - \sum_{\alpha} u_{\alpha \bar{\gamma}} (a_0)^{\alpha}_{\beta}.
\end{equation*}
Hence we get
\begin{equation} \label{avconefat}
	\partial_t |_{t=0} (a_t)^{\alpha}_{\beta} = (h_0)^{\alpha \bar{\gamma}}\nabla^{1,0 , \vee}_{h_0} u_{\beta \bar{\gamma}} = \nabla^{1,0 , \vee}_{h_0} u^{\alpha}_{\beta} 
\end{equation}
by defining $u^{\alpha}_{\beta} := \sum_{\gamma} (h_0)^{\alpha \bar{\gamma}} u_{\beta \bar{\gamma}}$, as in \cite[(4.2.12)]{Kobook}, where we used $\nabla_{h_0}^{1,0, \vee} h_0^{-1} = \partial (h_0)^{\alpha \bar{\gamma}} + (a_0)^{\alpha}_{\beta} (h_0)^{\beta \bar{\gamma}} =0$. The appearance of the dual in (\ref{apvacnddlv}) and (\ref{avconefat}) can be seen e.g.~from $u \in \Gamma_{C^{\infty}_X} (\mathcal{E}^{\vee} \otimes \overline{\mathcal{E}^{\vee}})$ (see also \cite[(1.5.20)]{Kobook}).

Let $\nabla^{1,0 , \mathrm{End}}_{h_0}$ be the $(1,0)$-part of the covariant derivative on the endomorphism bundle $\mathrm{End}_{C^{\infty}_X} (\mathcal{E}) \cong \mathcal{E} \otimes \mathcal{E}^{\vee}$, defined by $\nabla^{1,0 , \mathrm{End}}_{h_0} = \partial + a_0 \otimes \Id_{\mathcal{E}^{\vee}} - \Id_{\mathcal{E}} \otimes a_0$. At each point $x \in X$ we may choose a normal coordinate system so that the connection 1-form $a_0$ of $h_0$ vanishes at $x$. With respect to this coordinate system, the equation (\ref{avconefat}) means that the variation of the curvature is given by
\begin{equation*}
	\partial_t |_{t=0} F(h_t)^{\alpha}_{\beta} = \bar{\partial} \nabla^{1,0 , \mathrm{End}}_{h_0} u^{\alpha}_{\beta}.
\end{equation*}
Since this equation is tensorial, i.e.~covariant under the change of coordinate systems, we get
\begin{equation} \label{apvacurved}
	\partial_t |_{t=0} F(h_t) = \bar{\partial} \nabla^{1,0 , \mathrm{End}}_{h_0} (h_0^{-1} \partial_t |_{t=0} h_t)
\end{equation}
irrespectively of the coordinate system chosen.

We now proceed with the proof of convexity. Along any path $\{ h_t \}_{t} \subset \mathcal{H}_{\infty}$, one has
 \begin{align*}
  \left. \frac{d^2}{d t^2} \right|_{t=0} &\mathcal{M}^{Don}(h_t,h_0) \\
  &=\int_X \tr\left(\partial_t |_{t=0} (\Lambda_\omega F_t)\cdot v_0 + (\Lambda_\omega F_0 -\mu(\mathcal{E})\Id_{\mathcal{E}})\partial_t |_{t=0} v_t \right)\frac{\omega^n}{n!}
 \end{align*}
 where $v_t :=h_t^{-1}\partial_t h_t$ is a hermitian section of $\mathrm{End}_{C^{\infty}_X} (\mathcal{E})$. If $\{ h_t \}_{t} \subset \mathcal{H}_{\infty}$ is a geodesic path, the geodesic equation $\partial_t v_t =\partial_t (h_t^{-1}\partial_t h_t)=0$ means that the second term in the above integral vanishes. Moreover, recalling (\ref{apvacurved}), we find
 \begin{align}
 \left. \frac{d^2}{d t^2} \right|_{t=0} \mathcal{M}^{Don}(h_t,h_0)&=\int_X \tr\left(\partial_t |_{t=0} ( \Lambda_\omega F_t) \cdot v_0 \right)\frac{\omega^n}{n!} \notag \\
 &=\int_X \tr( \nabla^{1,0, \mathrm{End}}_{h_0} v_0 \wedge \bar{\partial} v_0 ) \frac{\omega^{n-1}}{(n-1)!} \label{hessmdonpos} \\
 &=\Vert \nabla^{1,0,\mathrm{End}}_{h_0} v_0 \Vert^2_{L^2}\geq 0, \notag
 \end{align}
and thus $\mathcal{M}^{Don}$ is convex, since by the cocycle property of the Donaldson functional (\ref{cocyclemdon}) we may take any point of $\mathcal{H}_{\infty}$ to be the reference metric.

Suppose now that $\mathcal{E}$ is irreducible and that $\Vert \nabla^{1,0,\mathrm{End}}_{h_0} v_0 \Vert^2_{L^2} = 0$ for some smooth hermitian section $v_0 := h_t^{-1}\partial_t h_t$ of $\mathrm{End}_{C^{\infty}_X} (\mathcal{E})$ associated to a geodesic path $\{ h_t \}_t \subset \mathcal{H}_{\infty}$; recall that the geodesic equation $\partial_t (h_t^{-1}\partial_t h_t)=0$ means that $h_t^{-1}\partial_t h_t$ does not depend on $t$. This implies $\nabla^{1,0 , \mathrm{End}}_{h_0} v_0 =\bar{\partial}v_0=0$; in particular, $v_0$ is a parallel hermitian section of $\mathrm{End}_{C^{\infty}_X} (\mathcal{E})$. This means that the set of fibrewise eigenvalues of $v_0$ can be written as $\{ b_{\alpha} \}_{\alpha}$, where each $b_{\alpha}$ is a real constant since $v$ is parallel and hermitian. The subbundle $\mathcal{E}_{\alpha}$ defined by $\mathcal{E}_{\alpha} := \ker \left( v - b_{\alpha} \Id_{\mathcal{E}}\right)$ is holomorphic since $\bar{\partial} v_0 =0$, and gives the decomposition $\mathcal{E} = \bigoplus_{\alpha} \mathcal{E}_{\alpha}$, which contradicts irreducibility except for the case when $v_0$ is of the form $b \cdot \Id_{\mathcal{E}}$, $b \in \mathbb{R}$ (see also \cite[Proposition 1.1.17]{L-T1}). Thus we get $h_t = e^{bt}h_0$, i.e.~$\{ h_t \}_t$ is a trivial geodesic, thereby concluding that the Donaldson functional is strictly convex along nontrivial geodesics in $\mathcal{H}_{\infty}$.

Finally, suppose that there exist two critical points $h_0 , h_1 \in \mathcal{H}_{\infty}$ of $\mathcal{M}^{Don}$. Since $\mathcal{H}_{\infty}$ is geodesically complete, we may take a geodesic path $\{ h_t \}_t$ connecting $h_0$ and $h_1$. The convexity of $\mathcal{M}^{Don}$ implies that $h_t$ must attain the minimum for all $0 \le t \le 1$. If $\mathcal{E}$ is irreducible, the strict convexity along nontrivial geodesics that we established above implies that $\{ h_t \}_t$ must be of the form $h_t = e^{bt} h_0$ for some $b \in \mathbb{R}$, and hence $h_1 = e^b h_0$. Thus the critical point of $\mathcal{M}^{Don}$ is unique up to an overall constant scaling.
\end{proof}

 Fixing $p\geq 2$, the proof of Lemma \ref{lemmdonconvH} carries over word by word for the geodesics in $\mathcal{H}_{[p]}$ (i.e.~a path $\{ h_t \}_t \subset \mathcal{H}_{[p]}$ satisfying $\partial_t (h^{-1}_t \partial_t h_t) =0$), by replacing $\mathcal{H}_{\infty}$ by $\mathcal{H}_{[p]}$ and $C^{\infty}_X$ by $C^{p}_X$, to yield the following generalisation.

\begin{prop} \label{lemmdonconv}
 For any $p\geq 2$, the functional $\mathcal{M}^{Don}$ is convex along geodesics in $\mathcal{H}_{[p]}$, and its critical point attains the global minimum. Moreover, $\mathcal{M}^{Don}$ is strictly convex along nontrivial geodesics in $\mathcal{H}_{[p]}$ if $\mathcal{E}$ is irreducible (in particular if $\mathcal{E}$ is simple) and in this case the critical point is unique up to an overall constant scaling if it exists.
\end{prop}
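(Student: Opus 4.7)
The plan is to run the proof of Proposition \ref{lemmdonconvH} verbatim in the $C^p$-setting, checking that every step remains valid under the weaker regularity. The key observation is that the convexity calculation involves at most two base derivatives of the metric, so $C^p$-regularity with $p \geq 2$ suffices for every quantity to be well defined. From Lemma \ref{lemhpprop}(1) and (5), $\mathcal{M}^{Don}$ is well defined and continuous on $\mathcal{H}_{[p]}$, and any two metrics in $\mathcal{H}_{[p]}$ are connected by the explicit geodesic $h_s = \exp(s \log(h_1 h_0^{-1})) h_0$, which is smooth in $s$ and $C^p$ in $x$, so that $v_s := h_s^{-1}\partial_s h_s = \log(h_1 h_0^{-1})$ is a fixed $C^p$-hermitian endomorphism independent of $s$.

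For convexity I would carry out the second-variation calculation of Proposition \ref{lemmdonconvH} verbatim. The local identities (\ref{apvacnddlv}), (\ref{avconefat}), (\ref{apvacurved}) involve at most two base derivatives of $h_t$, so they hold in the $C^p$-setting with values of class $C^{p-2} \ge C^0$. Along a $\mathcal{H}_{[p]}$-geodesic the boundary term $\partial_t v_t$ vanishes, leaving
\begin{equation*}
\left.\frac{d^2}{dt^2}\right|_{t=0} \mathcal{M}^{Don}(h_t,h_0) = \Vert \nabla^{1,0,\mathrm{End}}_{h_0} v_0 \Vert^2_{L^2} \geq 0,
\end{equation*}
so $\mathcal{M}^{Don}$ is convex along geodesics in $\mathcal{H}_{[p]}$. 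Combined with the cocycle property (\ref{cocyclemdon}) and geodesic completeness, any critical point attains the global minimum.

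For strict convexity when $\mathcal{E}$ is irreducible, vanishing of the above $L^2$-norm forces $\nabla^{1,0,\mathrm{End}}_{h_0} v_0 = 0$, and taking the hermitian adjoint (using $v_0 = v_0^*$) gives $\bar{\partial}v_0 = 0$; thus $v_0$ is a parallel hermitian endomorphism of $\mathcal{E}$. Its eigenvalues are locally constant real numbers $b_\alpha$, and the spectral projectors, being polynomials in $v_0$, are $\bar{\partial}$-closed, yielding a holomorphic splitting $\mathcal{E} = \bigoplus_\alpha \ker(v_0 - b_\alpha \Id_{\mathcal{E}})$ whose nontriviality would contradict irreducibility. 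Hence $v_0 = b \cdot \Id_{\mathcal{E}}$, i.e.~the geodesic $h_t = e^{bt} h_0$ is trivial. Uniqueness of the critical point up to constant scaling then follows exactly as in the smooth case: two critical points $h_0, h_1 \in \mathcal{H}_{[p]}$ are joined by a geodesic along which $\mathcal{M}^{Don}$ is both convex and equal to its minimum at the endpoints, hence constant, forcing the geodesic to be trivial by strict convexity.

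The main obstacle I anticipate is justifying that a parallel hermitian $C^p$-endomorphism of $\mathcal{E}$ (for the Chern connection of a merely $C^p$ metric) actually produces a holomorphic splitting, since Kobayashi's reference is written in the $C^\infty$-category. This is handled by a standard elliptic-bootstrap argument: solving the parallel-transport ODE in a local holomorphic frame together with $\bar{\partial} v_0 = 0$ promotes $v_0$ to a smooth section, after which the argument of \cite[Proposition 1.1.17]{L-T1} applies verbatim. All remaining steps are literal transcriptions of the proof of Proposition \ref{lemmdonconvH}, with $\mathcal{H}_{\infty}$ and $C^{\infty}_X$ replaced by $\mathcal{H}_{[p]}$ and $C^p_X$, respectively.
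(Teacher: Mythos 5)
Your proposal is correct and follows exactly the route the paper takes: the paper's entire proof of this proposition is the single remark that the argument for Proposition \ref{lemmdonconvH} carries over word by word with $\mathcal{H}_{\infty}$ and $C^{\infty}_X$ replaced by $\mathcal{H}_{[p]}$ and $C^{p}_X$. Your additional checks --- that the second-variation identities use at most two base derivatives so $p\ge 2$ suffices, and that the parallel endomorphism arising in the strict-convexity step can be promoted to a smooth one by bootstrapping before invoking the splitting argument --- are sensible elaborations of details the paper leaves implicit.
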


\section{Quantitative $C^0$-estimate and a lower bound of the Donaldson functional}

Our aim in this section is to find a sufficient condition for the Donaldson functional to remain bounded from below, in relation to a certain quantitative $C^0$-estimate. Such a condition can be stated more precisely as follows.

\begin{definition} \label{defdelbdd}
	A hermitian metric $h \in \mathcal{H}_{\infty}$ is said to be \textbf{$\delta$-bounded} with respect to $h_0 \in \mathcal{H}_{\infty}$ if it satisfies the following: writing $\lambda_{\mathrm{max}}(x)$ (resp.~$\lambda_{\mathrm{min}}(x)$) for the largest (resp.~smallest) eigenvalue of $hh^{-1}_0$ at $x \in X$, we have
	\begin{equation*}
		\inf_{x \in X} \frac{\lambda_{\mathrm{min}}(x)}{\lambda_{\mathrm{max}}(x)} \ge \delta
	\end{equation*}
	for some fixed $0 < \delta \le 1$.
\end{definition}

\begin{rem}
	Note that the above condition is equivalent to quantitatively bounding the $C^0$-norm of $\log hh^{-1}_0$, up to fixing an overall constant multiple. We prefer the above formalism not to be bothered by the overall scaling.

In relation to bounding $\log hh^{-1}_0$, it is perhaps worth mentioning that evaluating the $L^2$-norm of $\log h_{\epsilon}h^{-1}_{\mathrm{ref}}$ for a family of hermitian metrics $\{ h_{\epsilon } \}_{0< \epsilon \ll 1}$ uniformly for all $ \epsilon >0 $ along a certain continuity path was the crucial step in the approach of Uhlenbeck--Yau \cite{U-Y}.
\end{rem}

We shall show that the Donaldson functional can be bounded from below uniformly in terms of $\delta$ for all $\delta$-bounded hermitian metrics. The proof critically relies on the convexity of the Donaldson functional, in particular on the formula in the following lemma for the second derivative of the Donaldson functional; it is almost certainly well-known to the experts (see e.g.~\cite[page 31]{Siu87}), but we provide a self-contained proof for the reader's convenience.

\begin{lem} \label{lemsecdevm}
	Let $\gamma (s)$ be a geodesic with $v:= \gamma(s)^{-1} \partial_s \gamma (s)$. Then
	\begin{equation*}
	\frac{d^2}{ds^2} \mathcal{M}^{Don} (\gamma (s) , \gamma (0) ) = \int_X \tr( e^{sv} (\nabla^{1,0,\mathrm{End}}_{\gamma(0)}v) e^{-sv} \wedge \bar{\partial} v ) \frac{\omega^{n-1}}{(n-1)!},
\end{equation*}
\end{lem}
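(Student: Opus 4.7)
The plan is to combine the cocycle property (\ref{cocyclemdon}) with the variational formula for the curvature (\ref{apvacurved}) already used in the proof of Proposition \ref{lemmdonconvH}, and then to track how $\nabla^{1,0,\mathrm{End}}_{\gamma(s)}v$ evolves along the geodesic. First, the cocycle identity gives $\mathcal{M}^{Don}(\gamma(s+\epsilon),\gamma(0)) - \mathcal{M}^{Don}(\gamma(s),\gamma(0)) = \mathcal{M}^{Don}(\gamma(s+\epsilon),\gamma(s))$, and Lemma \ref{derivmdon} applied to the geodesic restarted at time $s$, together with the fact that $v$ is $s$-independent by the geodesic equation $\partial_s(\gamma(s)^{-1}\partial_s\gamma(s)) = 0$, produces
\begin{equation*}
\frac{d}{ds}\mathcal{M}^{Don}(\gamma(s),\gamma(0)) = \int_X \tr\!\left(v \cdot \left(\Lambda_\omega F_{\gamma(s)} - \frac{\mu(\mathcal{E})}{\Vol_L}\Id_\mathcal{E}\right)\right)\frac{\omega^n}{n!}.
\end{equation*}

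Differentiating once more in $s$, only the curvature depends on $s$; by (\ref{apvacurved}), which holds verbatim at every $s$ (not only at $s=0$) with the connection taken with respect to $\gamma(s)$, one has $\partial_s F_{\gamma(s)} = \bar\partial \nabla^{1,0,\mathrm{End}}_{\gamma(s)}v$. Using $\Lambda_\omega \alpha \cdot \omega^n/n! = \alpha \wedge \omega^{n-1}/(n-1)!$ for a $(1,1)$-form $\alpha$, and integrating by parts via Stokes' theorem (justified by $d\omega = 0$ together with the vanishing by bidegree of the $\partial$-component of $d(\tr(v \cdot \nabla^{1,0,\mathrm{End}}_{\gamma(s)}v)\wedge\omega^{n-1})$, plus the sign from swapping two matrix-valued $1$-forms under $\tr$), I would obtain the intermediate formula
\begin{equation*}
\frac{d^2}{ds^2}\mathcal{M}^{Don}(\gamma(s),\gamma(0)) = \int_X \tr\!\left(\nabla^{1,0,\mathrm{End}}_{\gamma(s)}v \wedge \bar\partial v\right)\frac{\omega^{n-1}}{(n-1)!},
\end{equation*}
which at $s=0$ reproduces (\ref{hessmdonpos}).

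The main obstacle, and the only step requiring genuine work, is to express $\nabla^{1,0,\mathrm{End}}_{\gamma(s)}v$ as an explicit conjugate of $\nabla^{1,0,\mathrm{End}}_{\gamma(0)}v$ by the one-parameter subgroup $e^{sv}$. Two routes are natural. The ODE route: differentiating $W_s := \partial v + [a_{\gamma(s)},v]$ in $s$, using $\partial_s a_{\gamma(s)} = W_s$ (the general-$s$ version of (\ref{avconefat})), yields the first-order linear equation $\frac{d}{ds}W_s = [W_s, v]$, whose unique solution with initial datum $W_0 = \nabla^{1,0,\mathrm{End}}_{\gamma(0)}v$ is a conjugate of $W_0$ by $e^{\pm sv}$. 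The direct route: along the geodesic $\gamma(s) = \gamma(0) e^{sv}$, and the Chern connection satisfies the gauge-transformation formula $a_{\gamma(s)} = e^{-sv} a_{\gamma(0)} e^{sv} + e^{-sv}\partial(e^{sv})$; because $v$ commutes with $e^{\pm sv}$, the bracket $[e^{-sv}a_{\gamma(0)}e^{sv},v]$ collapses to $e^{-sv}[a_{\gamma(0)},v]e^{sv}$, but the term $[e^{-sv}\partial(e^{sv}),v]$ is delicate since $v$ and $\partial v$ do not commute in general. The key algebraic identity, obtained by applying $\partial$ to $v\cdot e^{sv} = e^{sv}\cdot v$ and rearranging, is
\begin{equation*}
[\partial(e^{sv}),v] = [\partial v,\, e^{sv}],
\end{equation*}
and it produces a cross-term that absorbs the $\partial v$ inside $\nabla^{1,0,\mathrm{End}}_{\gamma(0)}v$ cleanly, yielding the required conjugation identity. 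Substituting into the intermediate formula above and using cyclic invariance of the trace (with $0$-form factors cycling freely while $1$-form factors pick up the expected signs) reorders the factors into the exact form $\tr(e^{sv}(\nabla^{1,0,\mathrm{End}}_{\gamma(0)}v)e^{-sv} \wedge \bar\partial v)$ stated in the lemma.
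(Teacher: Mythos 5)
Your proposal is correct and follows essentially the same route as the paper: the paper likewise starts from the second-variation formula (\ref{hessmdonpos}) at general $s$ and establishes the conjugation identity $\nabla^{1,0,\mathrm{End}}_{\gamma(s)}v = e^{sv}\,(\nabla^{1,0,\mathrm{End}}_{\gamma(0)}v)\,e^{-sv}$ by an $s$-differentiation argument equivalent to your ODE $\frac{d}{ds}W_s=[W_s,v]$ (it writes the difference of the two connections via the gauge-transformation rule and checks that both sides of the resulting bracket identity satisfy the same first-order equation with the same value at $s=0$). The only loose end in your write-up is the unresolved $e^{\pm sv}$ in the direction of conjugation, which you should pin down against the stated formula --- though either sign gives the same pointwise bound $\ge \delta^{s}\,\tr(\nabla v\wedge\bar\partial v)$ needed in the application to Theorem \ref{propUnif}.
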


\begin{rem}
A geometric meaning of the above formula is as follows. Since $\mathcal{M}^{Don}$ is convex along geodesics, its second derivative along a geodesic $\gamma (s)$ is always nonnegative. The above formula specifies how the second derivative changes along $\gamma (s)$, thereby quantitatively capturing the ``change in convexity'' along $\gamma (s)$.	
\end{rem}

\begin{proof}
By (\ref{hessmdonpos}), we have
$$\frac{d^2}{ds^2} \mathcal{M}^{Don} (\gamma (s) , \gamma (0) ) = \int_X \tr( \nabla_{\gamma(s)} v \wedge \bar{\partial} v ) \frac{\omega^{n-1}}{(n-1)!},$$ where $\nabla_{\gamma(s)}$ stands for (the $(1,0)$-part of) the connection $\nabla^{1,0, \mathrm{End}}_{\gamma(s)}$ on the endomorphism bundle. 

Note that
\begin{equation*}
	\nabla_{\gamma(s)} v = \nabla_{\gamma(0)} v + [v, (\nabla_{\gamma(0)}e^{sv}) e^{-sv}],
\end{equation*}
which follows from the usual transformation rule for the endomorphism bundle \cite[(1.5.16)]{Kobook}. We claim $[v,  (\nabla_{\gamma(0)}e^{sv})e^{-sv} ] = e^{sv} (\nabla_{\gamma(0)}v) e^{-sv} - \nabla_{\gamma(0)} v$. Recall first
\begin{equation*}
	\frac{d}{ds} \nabla_{\gamma(0)} e^{ s v} =  e^{ s v}(\nabla_{\gamma(0)} v ) +  (\nabla_{\gamma(0)} e^{ s v}) v,
\end{equation*}
which can be checked by the power series expansion of $e^{ s v}$. Thus
\begin{align*}
	&\frac{d}{ds} [ v, (\nabla_{\gamma(0)}e^{sv}) e^{-sv} ] \\
	&=[ v, e^{sv} (\nabla_{\gamma(0)}v) e^{-sv} ] + [v,  (\nabla_{\gamma(0)}e^{sv}) ve^{-sv} ]- [v,  (\nabla_{\gamma(0)}e^{sv})ve^{-sv}] \\
	&=e^{sv} [v, \nabla_{\gamma(0)} v] e^{-sv} \\
	&=\frac{d}{ds}e^{sv}(\nabla_{\gamma(0)}v)e^{-sv},
\end{align*}
and hence comparison at $s=0$ gives the claim. Thus
\begin{equation*}
	\frac{d^2}{ds^2} \mathcal{M}^{Don} (\gamma (s) , \gamma (0) ) = \int_X \tr( e^{sv} (\nabla_{\gamma(0)}v) e^{-sv} \wedge \bar{\partial} v ) \frac{\omega^{n-1}}{(n-1)!},
\end{equation*}
as required.

\end{proof}

We are now ready to state and prove the main result of this section.

\begin{thm}\label{propUnif}
	Suppose that $\mathcal{E}$ is irreducible and $h_0 \in \mathcal{H}_{\infty}$ be a reference metric. Let $0 < \delta \le 1$ be a fixed constant. Then there exists a constant $C(h_0)>0$ which depends only on $h_0$ such that
	\begin{equation*}
		\mathcal{M}^{Don} (h, h_0) \ge - C(h_0)  \frac{(\log \delta)^2}{\delta -1 - \log \delta}
	\end{equation*}
	for any $\delta$-bounded metric $h \in \mathcal{H}_{\infty}$ with respect to $h_0$. In particular, if $\delta >0$ is small enough, we have
	\begin{equation*}
		\mathcal{M}^{Don} (h, h_0) \ge - C(h_0) \log \delta^{-1}.
	\end{equation*}
\end{thm}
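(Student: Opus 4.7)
The plan is to express $\mathcal{M}^{Don}(h,h_0)$ as a linear functional plus a manifestly non-negative quadratic form in $v := \log(hh_0^{-1})$, and to control both using the $\delta$-boundedness together with the scalar Poincar\'e inequality. Writing $\gamma(s) := e^{sv}h_0$ for the explicit geodesic from $h_0$ at $s=0$ to $h$ at $s=1$, Taylor's theorem with integral remainder, combined with Lemma \ref{derivmdon} and Lemma \ref{lemsecdevm}, will give
\[
\mathcal{M}^{Don}(h,h_0) = L(v) + \int_0^1 (1-s)\frac{d^2}{ds^2}\mathcal{M}^{Don}(\gamma(s),h_0)\,ds,
\]
where $L(v) := \int_X \tr(vK)\,\omega^n/n!$ with $K := \Lambda_\omega F_{h_0} - (\mu(\mathcal{E})/\Vol_L)\Id_\mathcal{E}$. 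Pointwise diagonalisation of the self-adjoint Hilbert--Schmidt operator $\mathrm{ad}_{v(x)}$ on $\mathrm{End}(\mathcal{E}_x)$ (whose eigenvalues are $w_i(x)-w_j(x)$ for pairs of eigenvalues $w_\cdot(x)$ of $v(x)$) shows that the integrand of the second derivative is a pointwise-non-negative quadratic form in $\nabla^{1,0,\mathrm{End}}_{h_0}v$, and integration over $s$ produces the weight $f(\alpha) := \int_0^1 (1-s)e^{s\alpha}\,ds = (e^\alpha - 1 - \alpha)/\alpha^2$, which is positive and monotonically increasing on $\mathbb{R}$. Since $\delta$-boundedness gives $|w_i(x)-w_j(x)|\le -\log\delta$ pointwise, each weight satisfies $f(w_i-w_j) \ge f(\log\delta) = (\delta - 1 - \log\delta)/(\log\delta)^2$, yielding
\[
\int_0^1(1-s)\frac{d^2}{ds^2}\mathcal{M}^{Don}(\gamma(s),h_0)\,ds \ge f(\log\delta)\,\|\nabla^{1,0,\mathrm{End}}_{h_0}v\|^2_{L^2}.
\]

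Next I would bound $L(v)$. Observing $\int_X \tr K \cdot \omega^n/n! = 0$, one sees that $L$ is invariant under $v \mapsto v + c\Id_\mathcal{E}$, and the scaling invariance of $\mathcal{M}^{Don}$ (see (\ref{mdsclinv})) lets me normalise $\int_X \tr v \cdot \omega^n/n! = 0$. Decompose $v = v_\sharp + v_s\Id_\mathcal{E}$ with $v_s := \tr(v)/r$ a real scalar of zero mean and $v_\sharp$ traceless. The eigenvalues of $v_\sharp(x)$ sum to zero and have spread at most $-\log\delta$, so each lies in $[\log\delta, -\log\delta]$ and hence $\|v_\sharp(x)\|_{\mathrm{op}} \le -\log\delta$ pointwise, which gives $|\int_X \tr(v_\sharp K)\,\omega^n/n!| \le C_1(h_0)(-\log\delta)$ for a constant $C_1(h_0)$ depending only on $h_0$. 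For the scalar part, the identity $d(\tr v) = \tr(\nabla v)$ (commutators vanish under trace) combined with Cauchy--Schwarz yields $|dv_s|^2 \le (2/r)|\nabla^{1,0,\mathrm{End}}_{h_0}v|^2$, and the classical scalar Poincar\'e inequality (applicable since $\int v_s = 0$) then gives $\|v_s\|_{L^2} \le C_P\sqrt{2/r}\,\|\nabla^{1,0,\mathrm{End}}_{h_0}v\|_{L^2}$, whence $|\int_X v_s\tr K\cdot\omega^n/n!| \le C_2(h_0)\,\|\nabla^{1,0,\mathrm{End}}_{h_0}v\|_{L^2}$. Altogether $|L(v)| \le C_1(h_0)(-\log\delta) + C_2(h_0)\,\|\nabla^{1,0,\mathrm{End}}_{h_0}v\|_{L^2}$.

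Setting $t := \|\nabla^{1,0,\mathrm{End}}_{h_0}v\|_{L^2}$ and assembling the two estimates gives $\mathcal{M}^{Don}(h,h_0) \ge -C_1(-\log\delta) - C_2 t + f(\log\delta)\,t^2$, and completing the square in $t$ yields
\[
\mathcal{M}^{Don}(h,h_0) \ge -C_1(-\log\delta) - \frac{C_2^2}{4f(\log\delta)} = -C_1(-\log\delta) - \frac{C_2^2(\log\delta)^2}{4(\delta - 1 - \log\delta)}.
\]
The elementary inequality $-\log\delta \le (\log\delta)^2/(\delta - 1 - \log\delta)$ for $\delta\in(0,1)$ (which reduces to $(\log\delta)(1-\delta)\le 0$) consolidates both terms into $\mathcal{M}^{Don}(h,h_0) \ge -C(h_0)(\log\delta)^2/(\delta - 1 - \log\delta)$, establishing the first assertion; the second follows from $(\log\delta)^2/(\delta - 1 - \log\delta) \le 2\log\delta^{-1}$ for $\delta$ sufficiently small. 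The main technical obstacle I anticipate is the pointwise-diagonalisation argument for the quadratic part: the eigenframe of $v(x)$ need not extend smoothly across loci of eigenvalue multiplicity, but this should be resolved routinely either by a density/perturbation argument on $v$ or by phrasing the computation frame-independently via the spectral calculus of $\mathrm{ad}_{v(x)}$.
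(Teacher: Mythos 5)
Your argument is correct, and its core --- the geodesic $e^{sv}h_0$ with $v=\log hh_0^{-1}$, the second-derivative formula of Lemma \ref{lemsecdevm}, the pointwise eigenvalue-ratio bound $e^{s(w_i-w_j)}\ge \delta^s$ producing exactly the weight $C(\delta)=(\delta-1-\log\delta)/(\log\delta)^2$, and the final completion of the square --- coincides with the paper's proof; your Taylor remainder $\int_0^1(1-s)M''(s)\,ds$ is just the paper's double integral $\int_0^1\!\int_0^t M''(u)\,du\,dt$ after Fubini, and the frame-regularity issue you flag is equally present (and equally harmless, since the inequality is pointwise linear algebra) in the paper's version. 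Where you genuinely diverge is the linear term $L(v)$. The paper applies Cauchy--Schwarz to get $-\underline{C}(h_0)\Vert v-\bar v\Vert_{L^2}$ and then invokes the spectral gap of the Bochner Laplacian $\nabla^*\bar\partial$ on zero-average hermitian endomorphisms --- this is precisely where the irreducibility of $\mathcal{E}$ enters, via the argument of Proposition \ref{lemmdonconvH} that parallel hermitian endomorphisms are multiples of $\Id_{\mathcal{E}}$. You instead split $v$ into its traceless part, which $\delta$-boundedness controls pointwise by $-\log\delta$ (since zero-sum eigenvalues of spread at most $-\log\delta$ all lie in $[\log\delta,-\log\delta]$), and its scalar trace part, which only needs the classical Poincar\'e inequality for functions. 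Your route therefore never uses irreducibility and avoids the eigenvalue estimate for the endomorphism Laplacian, at the cost of the extra additive term $C_1(h_0)(-\log\delta)$, which you correctly absorb via $-\log\delta\le(\log\delta)^2/(\delta-1-\log\delta)$; the paper's route keeps the constant as an explicit product of $\underline{C}(h_0)^2$ with the Poincar\'e constant $C_{\nabla^*\bar\partial}(h_0)$, which is the more natural quantity if one wants to track how the bound degenerates as $\mathcal{E}$ approaches reducibility. Both yield the stated bound; yours is a mild strengthening in that the irreducibility hypothesis becomes superfluous.
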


\begin{rem} \label{rprpunifqsl}
In particular, in order to establish the lower bound of $\mathcal{M}^{Don}$ on a certain path $\{ h_t \}_t \subset \mathcal{H}_{\infty}$, it suffices to show that $h_t$ is $\delta$-bounded for all $t$. This amounts to establishing the $C^0$-estimate for the continuity path, which is reminiscent of the situation for the K\"ahler--Einstein metrics on Fano manifolds (cf.~\cite[Chapter 6]{TianBook}). 
The lower bound of the renormalised Quot-scheme limit, which follows from the lower bound discussed in Remark \ref{remlbrqlimfs}, would be desirable partly because in such case we would be able to apply Theorem \ref{propUnif} to the renormalised Bergman 1-PS, thereby making a progress towards proving Hypothesis \ref{conjpczero} (see also \cite[Remark 3.10]{HK1}).
\end{rem}

\begin{proof}
We apply Lemma \ref{lemsecdevm} to the geodesic segment $\gamma(s):= e^{sv}$ ($0 \le s \le 1$), $v:= \log hh^{-1}_0$ connecting $h_0=h_0$ and $h$. We choose a local diagonalising frame for $hh^{-1}_0$ so that $hh^{-1}_0 = \mathrm{diag}(\lambda_1 , \dots , \lambda_r )$, where $\lambda_1 , \dots , \lambda_r$ are strictly positive. Then, writing $\nabla$ for $\nabla^{1,0,\mathrm{End}}_{\gamma(0)}$, we compute
\begin{equation*}
	\frac{d^2}{ds^2} \mathcal{M}^{Don} (\gamma (s) , \gamma (0) ) = \int_X \sum_{i,j=1}^r  (\lambda_i / \lambda_j)^{s} (\nabla v)_{ij} \wedge (\bar{\partial} v)_{ji}  \frac{\omega^{n-1}}{(n-1)!}
\end{equation*}
with respect to this frame. Since $h$ is $\delta$-bounded with respect to $h_0$, we have
\begin{equation*}
	\frac{d^2}{ds^2} \mathcal{M}^{Don} (\gamma (s) , \gamma (0) ) \ge  \delta^s \int_X  \tr(  (\nabla v) \wedge (\bar{\partial} v) ) \frac{\omega^{n-1}}{(n-1)!}.
\end{equation*}

Thus, integrating twice with respect to $s$ over the range $[0,1]$, we get
\begin{equation*}
	\mathcal{M}^{Don} (h , h_0) \ge C(\delta) \Vert \nabla v \Vert^2_{L^2} + \left. \frac{d}{ds} \right|_{s=0} \mathcal{M}^{Don} (\gamma (s) , h_0) ,
\end{equation*}
where $C(\delta):= (\delta -1 - \log \delta)/(\log \delta)^2$, by noting
\begin{equation*}
	\int_0^s dt \int_0^t \delta^u du = \frac{1}{\log \delta} \left( \frac{\delta^s-1}{\log \delta} -s \right) .
\end{equation*}
Note also that $C(\delta) > 0$ if $0 < \delta \le 1$.

Recall that
\begin{equation*}
	\left. \frac{d}{ds} \right|_{s=0} \mathcal{M}^{Don} (\gamma (s) , h_0) = \int_X \tr \left( v \left( \Lambda_{\omega} F(h_0) - \frac{\mu(\mathcal{E})}{\mathrm{Vol}_L} \mathrm{Id}_{\mathcal{E}} \right) \right) \frac{\omega^n}{n!}.
\end{equation*}
Now define
\begin{equation*}
	\bar{v} := \frac{1}{r \mathrm{Vol}_L} \int_X \tr (v) \frac{\omega^n}{n!} \cdot \Id_{\mathcal{E}} ,
\end{equation*}
so that $v-\bar{v}$ has average 0. Since $\bar{v}$ is a constant multiple of the identity, we have
\begin{equation*}
	\left. \frac{d}{ds} \right|_{s=0} \mathcal{M}^{Don} (\gamma (s) , h_0) = \int_X \tr \left( (v-\bar{v}) \left( \Lambda_{\omega} F(h_0) - \frac{\mu(\mathcal{E})}{\mathrm{Vol}_L} \mathrm{Id}_{\mathcal{E}} \right) \right) \frac{\omega^n}{n!}.
\end{equation*}
Thus, by using Cauchy--Schwarz, we have
\begin{equation*}
	\left. \frac{d}{ds} \right|_{s=0} \mathcal{M}^{Don} (\gamma (s) , h_0) \ge - \Vert v - \bar{v} \Vert_{L^2} \left\Vert \Lambda_{\omega} F(h_0) - \frac{\mu(\mathcal{E})}{\mathrm{Vol}_L} \Id_{\mathcal{E}} \right\Vert_{L^2}, 
\end{equation*}
where the norm $\Vert \cdot \Vert_{L^2}$ is defined by
\begin{equation*}
	\Vert v -\bar{v} \Vert^2_{L^2} := \int_X \tr \left( (v -\bar{v}) \cdot  (v -\bar{v})) \right) \frac{\omega^n}{n!}.
\end{equation*}

Thus, defining a constant
\begin{equation*}
	\underline{C}(h_0) = \left\Vert \Lambda_{\omega} F(h_0) - \frac{\mu(\mathcal{E})}{\mathrm{Vol}_L} \Id_{\mathcal{E}} \right\Vert_{L^2} \ge 0,
\end{equation*}
we see that
\begin{equation*}
	\mathcal{M}^{Don} (h,h_0) \ge C(\delta) \Vert \nabla v \Vert^2_{L^2}  -\underline{C}(h_0) \Vert v - \bar{v} \Vert_{L^2} .
\end{equation*}

Suppose now that $\mathcal{E}$ is irreducible. Then $\nabla v = \bar{\partial} v = 0$ implies that $v$ is a constant multiple of $\mathrm{Id}_{\mathcal{E}}$, as we saw in the proof of Proposition \ref{lemmdonconvH}. In particular, defining the Bochner Laplacian $\nabla^* \bar{\partial}$ by
\begin{equation*}
	\int_X \tr(\nabla v , \bar{\partial} v) \frac{\omega^n}{n!}=  \int_X \tr( v , \nabla^* \bar{\partial} v) \frac{\omega^n}{n!} ,
\end{equation*}
and writing $\mathrm{Herm}(\mathcal{E})$ for the set of all hermitian endomorphisms with average zero, we see that the first eigenvalue of $\nabla^* \bar{\partial} |_{\mathrm{Herm}(\mathcal{E})}$, which is a self-adjoint linear elliptic operator, is nonzero. (A particularly important case is when $\mathcal{E}$ is simple, i.e.~$\mathrm{End}_{\mathcal{O}_X} (\mathcal{E}) = \mathbb{C}$, where the kernel of the $\bar{\partial}$-operator on $\mathrm{End}_{C^{\infty}_X} (\mathcal{E})$ is $\mathbb{C} \cdot \mathrm{Id}_{\mathcal{E}}$.) Thus the operator $\nabla^* \bar{\partial}$ is invertible on $\mathrm{Herm}(\mathcal{E})$ and there exists a constant $C_{\nabla^* \bar{\partial}}(h_0)>0$ which depends only on $h_0$ such that
\begin{equation*}
	\Vert v- \bar{v} \Vert^2_{L^2}   \le C_{\nabla^* \bar{\partial}} (h_0) \Vert \nabla (v- \bar{v}) \Vert^2_{L^2}.
\end{equation*}

Thus we finally get, by noting $\nabla (v- \bar{v}) = \nabla v$, $C_{\nabla^* \bar{\partial}} (h_0) >0$, $C(\delta) > 0$, and by completing the square,
\begin{align*}
	\mathcal{M}^{Don} (h,h_0) &\ge C(\delta) C_{\nabla^* \bar{\partial}}(h_0)^{-1} \Vert v - \bar{v} \Vert^2_{L^2}  - \underline{C}(h_0) \Vert v - \bar{v} \Vert_{L^2},  \\
	&\ge - \frac{1}{4} C(\delta)^{-1}   \underline{C}(h_0)^2 C_{\nabla^* \bar{\partial}}(h_0) .
\end{align*}
Recalling
\begin{equation*}
	C(\delta)= \frac{\delta -1 - \log \delta}{(\log \delta)^2} \sim \frac{1}{\log \delta^{-1}}
\end{equation*}
when $\delta>0$ is small enough, we get the result.
\end{proof}

\bibliography{notes.bib}

\vspace{2cm}
\address{{\small
\noindent\begin{tabular}{rp{7.8cm}}
 {\sc Yoshinori Hashimoto} & {\sc   Department of Mathematics, Tokyo Institute of Technology, 2-12-1 Ookayama, Meguro-ku, Tokyo, 152-8551, Japan.}\\
{\it Email address: } &\email{hashimoto@math.titech.ac.jp}\\
                      &  \\
{\sc Julien Keller} &  {\sc Aix Marseille Universit\'e, CNRS, Centrale Marseille, Institut de Math\'ematiques
de Marseille, UMR 7373, 13453 Marseille, France.}\\
{\it Email address: }& \email{julien.keller@univ-amu.fr}\\
\end{tabular}}}

\end{document}